\newcommand{\ind}[1]{\mathbf{1}_{#1}}
\newcommand{\eps}{\varepsilon}
\newcommand{\inn}[1]{\mathrm{int}\,#1}   
\newcommand{\bd}{\mathrm{bd}\,} 
\newcommand{\mydot}{\,\cdot\,}
\newcommand{\asM}{\widetilde{\sM}}
\newcommand{\asC}{\widetilde{\sC}}
\newcommand{\esslim}[1]{\underset{#1}{\rm{esslim}}\,}
\newcommand{\var}{\textnormal{var}}
\newcommand{\cl}[1]{\overline{#1}}
\newcommand{\setmap}{\mathbf{S}}
\newcommand{\Ha}{\mathcal{H}}
\newcommand{\diam}{\textnormal{diam}}
\renewcommand{\R}{\bR}
\newcommand{\N}{\mathbb{N}}
\renewcommand{\conv}{\textnormal{conv}\,}
  \let\oldmarginpar\marginpar
  \renewcommand\marginpar[1]
\numberwithin{equation}{section}
\numberwithin{theorem}{section}
\numberwithin{figure}{section}
\begin{document}

  \title[Minkowski content and fractal curvatures of self-similar tilings]
    {Minkowski content and fractal curvatures of self-similar tilings and generator formulas for self-similar sets}

%

  \author{Steffen Winter}
  \address{Karlsruhe Institute of Technology, Department of Mathematics, 76128 Karlsruhe, Germany}
  \email{\href{mailto:steffen.winter@kit.edu}{steffen.winter@kit.edu}}

  \begin{abstract}
  We study Minkowski contents and fractal curvatures of arbitrary self-similar tilings (constructed on a feasible open set of an IFS) and the general relations to the corresponding functionals for self-similar sets. In particular, we characterize the situation, when these functionals coincide. In this case, the Minkowski content and the fractal curvatures of a self-similar set can be expressed completely in terms of the volume function or curvature data, respectively, of the generator of the tiling. In special cases such formulas have been obtained recently using tube formulas and complex dimensions or as a corollary to results on self-conformal sets.
  Our approach based on the classical Renewal Theorem is simpler and works for a much larger class of self-similar sets and tilings. In fact, generator type formulas are obtained for essentially all self-similar sets, when suitable volume functions (and curvature functions, respectively) related to the generator are used.
  We also strengthen known results on the Minkowski measurability of self-similar sets, in particular on the question of non-measurability in the lattice case.

  \end{abstract}

  \date{\textbf{\today}}
  \keywords{self-similar set, self-similar tiling, Minkowski measurability, Minkowski content, generator formula, Renewal theorem}
  \subjclass[2010]{
    Primary: 28A80
    Secondary:  28A12, 28A75
    }
  \thanks{}

\maketitle
\section{Introduction}

Let $A$ be a bounded subset of $\R^d$ and $\eps>0$. Denote by
$$
A_\eps:=\{ z\in\bR^d:\, \inf_{a\in A}|a-z|\leq \eps\}
$$
 its \emph{$\eps$-parallel set} (or $\eps$-parallel neighbourhood), where $|\cdot|$ is the Euclidean norm. Writing $\lambda_d$ for the Lebesgue measure in $\bR^d$, the \emph{$s$-dimensional Minkowski content} of $A$ is the number
 \begin{align}
   \sM^s(A):=\lim_{\eps \searrow 0}\eps^{s-d} \lambda_d(A_\eps),
 \end{align}
provided the limit exists (in $[0,\infty]$). While disfavoured in the past for its theoretical drawbacks compared to other notions in fractal geometry, the Minkowski content has meanwhile proved to be an important notion, in particular in the theory of \emph{complex dimensions} and in connection with the study of spectral properties of domains with fractal boundaries, see \cite{FGCD} and the references therein, but also as a texture parameter (`\emph{lacunarity}') in applications as suggested by Mandelbrot in \cite{Mandelbrot82,Mandelbrot95}. The introduction of fractal curvature measures in \cite{winter08} (see Section~\ref{sec:curv} for defintions) and their systematic study in the last years, e.g.\ in~\cite{zaehle11,WZ,RW09,RW12,RatZa12,KK12,Kom,BZ13,bohl13} has also lead to a deeper understanding of Minkowski contents, in particular of its local properties. Nowadays this notion is viewed in line with the fractal curvatures as one of the geometric characteristics defined in terms of parallel set approximation.

However, computing explicitly the Minkowski content of a given bounded set $A$, remains a challenging task in general (very much like determining the exact Hausdorff or packing measure of a set $A$). Even the question, whether the Minkowski content $\sM^s(A)$ of $A$ exists for a suitable $s$, is not easy to decide in general.
Recall that the set $A$ is \emph{Minkowski measurable}, if $\sM^s(A)$ exists and is positive and finite for some $s\geq 0$.
For subsets $A$ of the real line $\R$, characterizations of Minkowski measurability have been given in terms of the asymptotic behaviour of the \emph{fractal string} associated to the closure of $A$ in \cite{LaPo1} and also in terms of the poles 
of associated zeta functions, see \cite{FGCD} and the references therein. In higher dimensions some analogous results hold for fractal sprays, cf.~\cite{LaPo2} and \cite{FGCD}. A different characterization is given in \cite{RW12} for arbitrary bounded sets in terms of the surface area of their parallel sets.

 For self-similar sets satisfying the open set condition more explicit results are available: in $\R$ such sets are known to be Minkowski measurable if and only if they are nonlattice, see \cite{Lap:Dundee,Falconer95}, and Lapidus conjectured in \cite{Lap:Dundee} that the same holds for self-similar sets in $\R^d$. This was partially confirmed by Gatzouras \cite{Gatzouras}, who proved that nonlattice sets are Minkowski measurable, leaving the question open, whether lattice sets are always non-Minkowski measurable. Recently, some progress has been made in \cite{LPW2,Kocak1,Kom}, where an affirmative answer to this question is given under additional assumptions. The results
 are based on the construction of suitable \emph{self-similar tilings} (introduced in \cite{SST} and generalized and investigated further in \cite{GeometryOfSST}), which generalize the notion of fractal strings to higher dimensions, see Section~\ref{sec:pre} for more details on self-similar tilings. The non-Minkowski measurability in the lattice case was shown under a number of rather restrictive assumptions, including the existence of a \emph{compatible} self-similar tiling with a \emph{monophase} generator (i.e., one whose inner parallel volume is a polynomial). The derivation consists of the two steps to first compute the Minkowski content of the associated tiling (cf.~\eqref{eqn:def-Mink-T} for the definition), and then to show that it coincides with the Minkowski content of the self-similar set up to a possible correction term (which we will show below to be negligible for strong open sets). In \cite{LPW2, Kocak1} the first step is achieved by employing a suitable tube formula (from ~\cite{TFCD,LPW1}), in \cite{Kom} symbolic dynamics and a symbolic Renewal theorem of Lalley \cite{Lal89} are used, which allows in fact to treat also pluriphase generators.

A side result of these derivations are explicit formulas for the Minkowski content (in case it exists)
and for the average Minkowski content (which does always exist for self-similar sets), which, apart from the scaling ratios and dimension, involve only the geometric information of the generator of the tiling. This is remarkable in view of the fact, that previously known formulas (see e.g.\cite{Gatzouras} and \cite{winter08}) involve very different geometric data, namely the intersections of parallel sets of smaller copies of the self-similar sets, cf.~also Theorem~\ref{thm:gatz} below. These are usually more difficult to compute. In \cite{Kom}, such generator formulas are obtained also for fractal curvatures and even for some self-conformal sets. Since the focus in this previous work is a different one (non-measurability in \cite{LPW2} and self-conformal sets in \cite{Kom}), the proofs are rather long and technical and it is not easy to see, which of the various assumptions  allow which conclusions and, in particular, which are really needed in the self-similar setting for such generator formulas to hold. This is the central question to be addressed in this paper. More precisely, we study the following problems:

1) What can be said in general about the Minkowski content (and the fractal curvatures) of self-similar tilings on their own?
2) What is the general relation between the Minkowski contents (and fractal curvatures) of a self-similar set and suitable associated tilings? 3) Under which conditions is it possible to express the Minkowski content (the fractal curvatures) of a self-similar set in terms of the generator of an associated tiling, that is, when does there exist a generator formula for the Minkowski content (the fractal curvatures) of a self-similar set?
We address these questions separately first for Minkowski contents (in Section \ref{sec:main}) and then  for the other fractal curvatures (in Section~\ref{sec:curv}).

Concerning Minkowski contents, the first question is answered for arbitrary self-similar tilings as follows: Using renewal theory, we show that under a very mild and natural assumption on the generator $G$ of the tiling (namely that the dimension of the boundary of $G$ is smaller than the similarity dimension of the underlying IFS), a similar lattice-nonlattice dichotomy holds for the Minkowski content of tilings as for self-similar sets. That is, we prove a counterpart of Gatzouras' Theorem for self-similar tilings, see~ Theorem~\ref{thm:main}. Moreover, we obtain a simple and general generator formula for the (average) Minkowski content of a self-similar tiling in terms of its generator $G$, see Corollary~\ref{cor:main}, which specializes to the known expressions in the previously studied cases of monophase and pluriphase tilings, cf.~Corollaries \ref{cor:monophaseR} and \ref{cor:pluriphase}.
Concerning the second question, we demonstrate that for self-similar tilings constructed on a \emph{strong} feasible open set $O$ compatibility is sufficient for the (average) Minkowski contents of the set and the tiling to coincide. No further assumptions are required. In particular, the generator does not need to be monophase and the contribution of the parallel sets of $O$ (which appears e.g.\ in the formulas in \cite{LPW2}) is always negligible, see Theorem~\ref{thm:Minkowski-measurability-fractals}. We emphasize that our main point here is not the formula itself (which is known in special cases from \cite{LPW2,Kocak1,Kom} and even holds for certain self-conformal sets, see \cite{Kom}), but the generality of its validity and its remarkably simple proof based on the classical Renewal Theorem.

While it is now clear, that for all self-similar sets which possess a compatible tiling, the Minkowski content can be expressed by a generator formula, it is also well known that not all self-similar sets possess compatible tilings, see \cite{GeometryOfSST,PokWi}.  In view of the third question, it is therefore natural to ask whether generator formulas can be obtained also in non-compatible situations.
It turns out that essentially all self-similar sets allow generator formulas, as long as they possess a tiling, that is, as long as they are not full-dimensional, see Theorem~\ref{thm:main-main}.
The key to this is to change our point of view on tilings. We study for a tiling constructed on a strong feasible set $O$ of a self-similar set $F$ the volume function $\lambda_d(F_\eps\cap O)$ instead of the parallel volume of the tiling. We show that the thus modified Minkowski content (which is in fact, the relative Minkowski content of $F$ relative to the set $O$) does always coincide with the ordinary Minkowski content of $F$. Moreover, a generator formula holds for some pair $(F,O)$ if and only if $F$ is not full-dimensional and the set $O$ satisfies a certain projection condition (cf.~\eqref{eqn:PC}, p.\pageref{eqn:PC}), see Theorem~\ref{thm:Minkowski-measurability-fractals2}. But due to an observation of E.\ Pearse, there is always a strong feasible set $O$ satisfying this projection condition, see Proposition~\ref{prop:Vc}.
Hence self-similar tilings may be used to compute Minkowski contents of self-similar sets in general. No compatibility is needed. In particular, the results apply also to self-similar sets in $\R^d$ of dimension less than $d-1$ and e.g.\ to the Koch curve.

The results regarding Minkowski contents allow as well to strengthen the known statements on the non-Minkowski measurability in the lattice case by removing some of the several assumptions made in earlier work on this question, cf.~Corollary~\ref{cor:monophase} and Remark~\ref{rem:monophase-no-comp}.
We hope that our results will also push forward the resolution of Lapidus' conjecture in the general case, as the general generator formula for the Minkowski content may help to find the right tubular zeta function required to extend e.g.\ the proofs in \cite{LPW2}.

In Section~\ref{sec:curv}, we first introduce and study fractal curvatures of self-similar tilings. A counterpart of Gatzouras' Theorem for fractal curvatures of self-similar tilings is proved, which parallels results obtained for self-similar sets in \cite{winter08,zaehle11}.
A formula expressing the fractal curvatures of a tiling in terms of the curvature data of
its generator is obtained. Concerning the second question, namely the relations between the fractal curvatures of self-similar sets and associated tilings, we show that under compatibility, analogous results hold as for the Minkowski contents. The fractal curvatures of self-similar sets are expressed by generator formulas, see Theorem~\ref{thm:comp}. For our results, the usual assumptions required to ensure the existence of fractal curvatures of self-similar sets (regularity of the parallel sets, curvature bound condition) suffice -- when combined with compatibility. More generally, we show that generator formulas hold also in non-compatible situations, provided the projection condition \eqref{eqn:PC} holds, see Theorem~\ref{thm:no-comp-curv}. Here again the tiling is used to partition $O$ and the curvature measures $C_k(F_\eps,R)$ of $F_\eps$ inside the tiles $R$ are studied rather than the curvature measures of the parallel sets $R_{-\eps}$ of $R$. This way we recover and extend in the self-similar setting the results obtained in \cite{Kom}.


We remark that the classical Renewal Theorem (on which these results as based) turned out to be a perfect tool for studying self-similar tilings as well as the required relative Minkowski contents and relative fractal curvatures. The renewal equations are simpler than the ones occurring for self-similar sets. Essentially, the only other tool used are some estimates derived in \cite{winter08} and \cite{WZ}, respectively.

In Section~\ref{sec:pre}, we recall self-similar sets and self-similar tilings and introduce some notation. Section~\ref{sec:main} is devoted to the results on Minkowski contents, while in Section~\ref{sec:curv}, we study fractal curvatures of self-similar sets and tilings.

\section{Preliminaries} \label{sec:pre}

Let $N\in\bN$, $N\geq 2$ and let $\{S_1,\ldots, S_N\}$ be an iterated function system (IFS) consisting of contracting similarities $S_i:\bR^d\to\bR^d$ with contraction ratios $r_i\in(0,1)$, $i=1,\ldots, N$. It is well known that for each such IFS there is a unique nonempty compact set $F$ satisfying the invariance relation $\setmap F=F$, where $\setmap$ is the set mapping defined by
\begin{align}
\setmap(A)=\bigcup_{i=1}^N S_i(A),\quad A\subset\bR^d,
\end{align}
see \cite{Hut}.  $F$ is called the \emph{self-similar set} generated by the IFS $\{S_1,\ldots, S_N\}$. To avoid strong overlaps of the pieces $S_i F$ in the union set $F$, frequently the following assumption is made on the IFS, called the \emph{open set condition} (OSC): There exists a nonempty and bounded open set $O$ such that for all $i\neq j$
\begin{align} \label{eqn:OSC}
  S_i O\subset O\quad \text{ and } \quad S_i O\cap S_j O= \emptyset.
\end{align}
If additionally $O$ is assumed to satisfy $O\cap F\neq \emptyset$, then this condition is called the \emph{strong open set condition} (SOSC). In the present setting, OSC and SOSC are known to be equivalent, cf.~\cite{Schief}, although not every set $O$ satisfying \eqref{eqn:OSC} does contain a point of $F$. We call any set $O$ satisfying \eqref{eqn:OSC} a \emph{feasible open set} for $F$ (or the IFS $\{S_1,\ldots, S_N\}$) and if $O$ satisfies additionally $O\cap F\neq\emptyset$, we call it a \emph{strong feasible open set} for $F$.

Let $D\in\bR$ be the unique real solution $s$ of the equation
\begin{align*}
  \sum_{i=1}^N r_i^s=1.
\end{align*}
$D$ is called the \emph{similarity dimension} of $F$ (or of the IFS $\{S_1,\ldots, S_N\}$). It is well known that under OSC $D$ coincides with the Minkowski dimension (and other dimensions) of the set $F$.
In \cite{GeometryOfSST}, for any fixed feasible open set $O$, a tiling $\sT=\sT(O)$ of $O$ has been associated to the IFS $\{S_1,\ldots, S_N\}$, which is defined as follows: Denote by $\Sigma_N^n:=\{1,\ldots, N\}^n$ the family of all words of length $n\in\bN_0$ formed by the alphabet $\{1,\ldots, N\}$ and let $\Sigma_N^*:=\bigcup_{n=0}^\infty\Sigma_N^n$ be the family of all finite words. For $\sigma=\sigma_1\ldots\sigma_k\in\Sigma_N^*$, we use the abbreviations $S_\sigma:=S_{\sigma_1}\circ S_{\sigma_2}\ldots\circ S_{\sigma_k}$ and $r_\sigma:=r_{\sigma_1}\cdot r_{\sigma_2}\cdot\ldots\cdot r_{\sigma_k}$.

We write $K:=\cl{O}$ for the closure of the set $O$ and set $G:=O\setminus \setmap K$. Observe that $G$ is open. The tiling $\sT(O)$ is the set family
\begin{align}
  \sT(O):=\{S_\sigma G : \sigma\in\Sigma_N^*\}
\end{align}
of the iterates of $G$ under the mappings of the IFS. $\sT(O)$ is a tiling of the set $O$ in the sense that the elements of $\sT(O)$ are pairwise disjoint and that the closure of their union coincides with the closure $K$ of $O$, that is, we have the decomposition
$$
K=\cl{\bigcup_{R\in\sT} R},
$$
see \cite[Thm.~5.7]{GeometryOfSST}.
We call the set $G$ the \emph{generator} of $\sT$ and we write $T:=\bigcup_{R\in\sT} R$ for the union set of all \emph{tiles} of $\sT$. Observe that $T$ is open, since all tiles are open.

\section{Minkowski content of self-similar sets and tilings} \label{sec:main}

We start by recalling a result on the existence of the Minkowski content for self-similar sets, which is essentially due to Gatzouras \cite[Theorems 2.3 and 2.4]{Gatzouras} (except for the case $d=1$, which was obtained earlier in \cite{Lap:Dundee} and \cite{Falconer}) and which can be proved using some Renewal Theorem. Recall that for a compact set $A\subset\bR^d$ and $s\geq 0$, the ($s$-dimensional) \emph{average Minkowski content} $\asM^s(A)$ is defined by
\begin{align} \label{eq:avMC}
   \asM^s(A):=\lim_{\delta \searrow 0}\frac{1}{|\ln\delta|} \int_\delta^1
\eps^{s-d} \lambda_d(A_\eps) \frac{d\eps}{\eps},
\end{align}
whenever this limit exists.

\begin{theorem} {\upshape(Gatzouras' Theorem)}\label{thm:gatz}\\
Let $F\subset\bR^d$ be a self-similar set satisfying OSC and let $D$ be the similarity dimension of $F$.
Then the average Minkowski content $\asM^D(F)$ of $F$ exists and coincides with the strictly positive value
\begin{equation} \label{eqn:Xd}
X_d:=\frac{1}{\eta} \int_0^1 \eps^{D-d-1} R_d(\eps)\ d\eps,
\end{equation}
where the function $R_d:(0,\infty)\to \bR$ is given by
\begin{equation} \label{Rd-def}
R_d(\eps) = \lambda_d(F_\eps)-\sum_{i=1}^N \ind{(0,r_i]}(\eps) \lambda_d((S_i F)_\eps)\,
\end{equation}
and $\eta= - \sum_{i=1}^N r_i^D \ln r_i$.
If $F$ is nonlattice, then also the Minkowski content $\sM^D(F)$ of $F$ exists and equals $X_d$.
\end{theorem}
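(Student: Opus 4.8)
The plan is to reduce the assertion to the classical Renewal Theorem, following \cite{Gatzouras,Falconer}. First I would pass to logarithmic variables: writing $V(\eps):=\lambda_d(F_\eps)$, set $\eps=e^{-t}$ and $g(t):=\eps^{D-d}V(\eps)=e^{(d-D)t}V(e^{-t})$ for $t\geq0$. Since $S_i$ is a similarity with ratio $r_i$, one has $(S_iF)_\eps=S_i(F_{\eps/r_i})$ and hence $\lambda_d((S_iF)_\eps)=r_i^d\,V(\eps/r_i)$. Substituting this into the definition \eqref{Rd-def} of $R_d$, multiplying by $\eps^{D-d}$, and using $r_i^d\eps^{D-d}=r_i^D(\eps/r_i)^{D-d}$ together with $\ind{(0,r_i]}(e^{-t})=\ind{\{t\geq-\ln r_i\}}$ converts that identity into the renewal equation
\begin{equation*}
  g(t)=z(t)+\sum_{i=1}^N r_i^D\,g(t+\ln r_i)\,\ind{\{t\geq-\ln r_i\}},\qquad z(t):=e^{(d-D)t}R_d(e^{-t}),
\end{equation*}
that is, $g=z+g*\mu$ with $\mu:=\sum_{i=1}^N r_i^D\,\delta_{-\ln r_i}$ (and $g\equiv 0$ on $(-\infty,0)$). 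By the defining relation $\sum_i r_i^D=1$ of the similarity dimension, $\mu$ is a \emph{probability} measure on $(0,\infty)$, with mean $\int s\,\mu(ds)=-\sum_i r_i^D\ln r_i=\eta$; and $\mu$ is lattice precisely when the numbers $-\ln r_1,\ldots,-\ln r_N$ are all integer multiples of a common $h>0$, i.e.\ precisely when $F$ is lattice. Note also that $g$ is bounded on bounded intervals (as $V$ is), so it is the renewal-theoretic solution $z*\sum_{n\geq0}\mu^{*n}$ of this equation.

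The next step is to verify that the inhomogeneity $z$ is directly Riemann integrable on $[0,\infty)$. On each bounded interval this is routine, since $z$ is bounded there and continuous except at the finitely many points $-\ln r_i$. For $t$ large (equivalently $\eps\leq\min_i r_i$) all indicators in \eqref{Rd-def} equal $1$, so $R_d(\eps)=\lambda_d\big(\bigcup_i(S_iF)_\eps\big)-\sum_i\lambda_d((S_iF)_\eps)$, which by inclusion--exclusion lies in $\big[-\sum_{i<j}\lambda_d((S_iF)_\eps\cap(S_jF)_\eps),\,0\big]$; thus $|z(t)|$ is controlled by $\eps^{D-d}$ times the total overlap volume of the level-one pieces. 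Here the open set condition enters essentially: since the $S_iO$ are pairwise disjoint and contained in $O$, the pieces $S_iF\subset\overline{S_iO}$ are geometrically separated, and the bounded-overlap estimates for the family $\{S_\sigma F:\sigma\in\Sigma_N^*\}$ available under OSC can be used to bound this overlap volume by $\eps^{d-D}$ times a factor tending to $0$ as $\eps\to0$, fast enough that $\int_0^1\eps^{D-d-1}|R_d(\eps)|\,d\eps<\infty$ and $z$ meets one of the standard sufficient conditions for direct Riemann integrability. I expect establishing this overlap estimate to be the main obstacle; it is the geometric core of \cite{Gatzouras} (and, for $d=1$, of \cite{Falconer}), while the rest of the argument is formal.

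Granting this, the Renewal Theorem applies to $g$. In the \emph{nonlattice} case it gives $g(t)\to\eta^{-1}\int_0^\infty z(s)\,ds$ as $t\to\infty$; the substitution $\eps=e^{-s}$ identifies $\int_0^\infty z(s)\,ds=\int_0^1\eps^{D-d-1}R_d(\eps)\,d\eps$, so $\sM^D(F)=\lim_{\eps\searrow0}\eps^{D-d}V(\eps)=X_d$. In general -- in particular in the lattice case -- I would invoke the averaged (Cesàro) form of the Renewal Theorem, which gives $\frac1T\int_0^T g(t)\,dt\to\eta^{-1}\int_0^\infty z(s)\,ds=X_d$ as $T\to\infty$; substituting $\eps=e^{-t}$ in \eqref{eq:avMC} gives $\frac{1}{|\ln\delta|}\int_\delta^1\eps^{D-d}V(\eps)\frac{d\eps}{\eps}=\frac{1}{|\ln\delta|}\int_0^{|\ln\delta|}g(t)\,dt$, whence $\asM^D(F)=X_d$. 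Finally, to see $X_d>0$ I would use the standard fact that under OSC the lower Minkowski content of $F$ is strictly positive, i.e.\ $V(\eps)\geq c\,\eps^{d-D}$ for all small $\eps$; then $g(t)\geq c>0$ for large $t$, and therefore $X_d\geq c>0$.
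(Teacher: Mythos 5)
Your outline is correct and follows exactly the route the paper itself indicates for this statement: Theorem~\ref{thm:gatz} is not proved in the paper but recalled from Gatzouras (and Falconer/Lapidus for $d=1$) as a consequence of the Renewal Theorem, which is precisely your reduction --- the renewal equation for $g(t)=e^{(d-D)t}\lambda_d(F_{e^{-t}})$ driven by the probability measure $\sum_i r_i^D\delta_{-\ln r_i}$ of mean $\eta$, direct Riemann integrability of the inhomogeneity via a power-law bound on $\lambda_d\bigl((S_iF)_\eps\cap(S_jF)_\eps\bigr)$ under OSC, and the ordinary resp.\ Ces\`aro form of the Renewal Theorem in the nonlattice resp.\ general case, plus Ahlfors regularity for positivity. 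The one ingredient you defer --- the overlap estimate $\lambda_d((S_iF)_\eps\cap(S_jF)_\eps)\le c\,\eps^{d-D+\gamma}$, which you correctly identify as the geometric core --- is indeed the substance of Gatzouras' argument and is of the same nature as the estimate the paper imports from \cite{winter08} as Lemma~\ref{lem:key}, so nothing in your sketch would fail.
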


\paragraph{\bf Minkowski content of self-similar tilings.} Our first aim is to provide an analogous result for arbitrary self-similar tilings. Similar to Gatzouras' Theorem, it is derived by employing the Renewal Theorem. However, its derivation is surprisingly simple and general. 
In the special case of a monophase generator, we will recover from this general statement the expressions for the Minkowski content derived in \cite{LPW1,LPW2} by means of fractal tube formulas and complex dimensions.

We recall the definition of the (inner) Minkowski content of a self-similar tiling.
In \cite{HeLa2, Zubrinic} the \emph{($s$-dimensional) relative Minkowski content} of a bounded set $A\subset\R^d$ \emph{relative to a set} $\Omega\subset\R^d$ is defined as the number
 \begin{align} \label{eqn:inner_Mink_def}
   \sM^s(A,\Omega):=\lim_{\eps \searrow 0}\eps^{s-d} \lambda_d \left( A_{\eps}\cap \Omega \right),
 \end{align}
provided the limit exists (in $[0,\infty]$). The Minkowski dimension $\dim_M(A,\Omega)$ and the average Minkowski content $\widetilde\sM^s(A,\Omega)$ of $A$ \emph{relative to} $\Omega$ are analogously defined in the obvious way. The \emph{inner Minkowski content} of a bounded open set $U\subset\R^d$ is the relative Minkowski content $\sM^s(\bd U, U)$ of $\bd U$ relative to $U$. Observe that $\sM^s(\bd U, U)$ is equivalently given by the limit $\lim_{\eps \searrow 0}\eps^{s-d}V(U,\eps)$, where
\begin{align*}
   V(U,\eps):=\lambda_d(U_{-\eps})
\end{align*}
is the volume of the \emph{inner $\eps$-parallel set}
\begin{align} \label{eq:inner-par-set}
U_{-\ge}:= \{x \in U : d(\bd U, x) \leq \ge\}
\end{align}
of $U$.
For self-similar tilings $\sT$, it is convenient to write
\begin{align} \label{eqn:def-Mink-T}
\sM^D(\sT)&:=\sM^D(\bd T, T) \quad \text{ and } \quad \asM^D(\sT):= \asM^D(\bd T,T).
\end{align} 

The counterpart of Theorem~\ref{thm:gatz} for self-similar tilings $\sT$ reads as follows. As before $D$ is the similarity dimension of the underlying IFS (which coincides with the Minkowski dimension of the generated self-similar set). Recall that the generator of $\sT=\sT(O)$ is the open set $G=O\setminus\setmap \cl{O}$. Let $g$ denote the inradius of $G$.

\begin{theorem} \label{thm:main}
Let $\sT=\sT(O)$ be a self-similar tiling generated on a feasible open set $O$. Assume that the generator $G$ of $\sT$ satisfies $\dim_M(\bd G, G) <D$.
Then the $D$-dimensional average Minkowski content of $\sT$ exists, is strictly positive and given by
\begin{align}\label{eqn:Yd}
\widetilde\sM^D(\sT)=\frac{1}{\eta} \int_0^g \eps^{D-d-1} h(\eps)\ d\eps,
\end{align}
where $\eta= - \sum_{i=1}^N r_i^D \ln r_i$ and the function $h:(0,\infty)\to \bR$ is given by
\begin{equation} \label{h-def}
h(\eps) = V(T,\eps)-\sum_{i=1}^N \ind{(0,r_ig]}(\eps) V(S_i T, \eps)\,.
\end{equation}
If $\sT$ is nonlattice, then also the Minkowski content $\sM^D(\sT)$ of $\sT$ exists and equals $Y_d$.
\end{theorem}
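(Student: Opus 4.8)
Here is how I would prove Theorem~\ref{thm:main}.

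The plan is to reduce the statement to the classical Renewal Theorem by deriving a renewal equation for the inner parallel volume $V(T,\cdot)$ of the tiling, paralleling the renewal‑theoretic proof of Gatzouras' Theorem~\ref{thm:gatz} but in a considerably simpler form. The first step is to record the two elementary geometric facts on which everything rests. Since $T=G\cup\bigcup_{i=1}^N S_iT$ with the tiles pairwise disjoint and open, and since an open set is disjoint from the closure of any open set disjoint from it, the boundary of every tile lies in $\bd T$; in particular $\bd G\subseteq\bd T$ and $\bd(S_iT)\subseteq\bd T$. It follows that $d(x,\bd T)=d(x,\bd G)$ for $x\in G$ and $d(x,\bd T)=d(x,\bd(S_iT))$ for $x\in S_iT$, hence $T_{-\eps}\cap G=G_{-\eps}$ and $T_{-\eps}\cap S_iT=(S_iT)_{-\eps}$ for every $\eps>0$. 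Combined with the scaling identity $V(S_iT,\eps)=r_i^d\,V(T,\eps/r_i)$ this gives, for every $\eps>0$,
\begin{align*}
V(T,\eps)=V(G,\eps)+\sum_{i=1}^N V(S_iT,\eps)=h(\eps)+\sum_{i=1}^N \ind{(0,r_ig]}(\eps)\,r_i^d\,V(T,\eps/r_i),
\end{align*}
and displays $h$ explicitly as $h(\eps)=V(G,\eps)+\sum_{i:\,\eps>r_ig}V(S_iT,\eps)\ge0$; in particular $h(\eps)=V(G,\eps)$ for $0<\eps\le g\min_i r_i$, and for $\eps\le g$ the right‑hand side only involves $V(T,\cdot)$ at arguments $\le g$, so the equation is self‑contained on $(0,g]$.

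The second step is to pass to a genuine renewal equation. Substituting $\eps=g\,e^{-t}$ and setting $\zeta(t):=(g e^{-t})^{D-d}V(T,g e^{-t})$ and $\phi(t):=(g e^{-t})^{D-d}h(g e^{-t})$ turns the displayed identity, for $t\ge 0$, into
\begin{align*}
\zeta(t)=\phi(t)+\int_0^\infty\zeta(t-s)\,\mu(ds),\qquad \mu:=\sum_{i=1}^N r_i^D\,\delta_{-\ln r_i},
\end{align*}
where $\mu$ is a probability measure by the definition of $D$, has mean $\eta=-\sum_i r_i^D\ln r_i\in(0,\infty)$, and is non‑arithmetic precisely when the IFS is nonlattice. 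The crucial point is that $\phi$ is directly Riemann integrable on $[0,\infty)$, and this is where the hypothesis $\dim_M(\bd G,G)<D$ enters: fixing $s$ with $\dim_M(\bd G,G)<s<D$ one has $V(G,\eps)\le\eps^{d-s}$ for all small $\eps$, so (with $\eps=g e^{-t}$) $0\le\phi(t)=\eps^{D-d}V(G,\eps)\le C\,e^{-(D-s)t}$ for large $t$, while on every bounded $t$‑interval $\phi$ is bounded (by a multiple of $\lambda_d(O)$) and is a difference of monotone functions, hence Riemann integrable; consequently $\sum_n\sup_{[n,n+1]}\phi<\infty$ and $\phi$ is continuous a.e., which is direct Riemann integrability.

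The third step is to apply the Renewal Theorem. Its averaged form, valid in the lattice and the nonlattice case alike, gives $\frac1t\int_0^t\zeta(u)\,du\to\eta^{-1}\int_0^\infty\phi(s)\,ds$ as $t\to\infty$; undoing the substitution yields $\int_0^\infty\phi(s)\,ds=\int_0^g\eps^{D-d-1}h(\eps)\,d\eps$, and since the Cesàro average of $\zeta$ and the defining average of $\widetilde\sM^D(\sT)=\widetilde\sM^D(\bd T,T)$ differ only by the asymptotically negligible shift of the integration variable by $\ln g$, this gives \eqref{eqn:Yd}. Strict positivity follows because $h\ge0$ and $h(\eps)=V(G,\eps)=\lambda_d(G_{-\eps})>0$ for $0<\eps<g\min_i r_i$ — indeed $G$ is a nonempty bounded open set, so $G_{-\eps}$ has positive Lebesgue measure for every $\eps>0$. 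Finally, if the IFS is nonlattice then $\mu$ is non‑arithmetic and the Renewal Theorem yields the genuine limit $\zeta(t)\to\eta^{-1}\int_0^\infty\phi$, i.e.\ $\lim_{\eps\searrow0}\eps^{D-d}V(T,\eps)$ exists and equals the same value, so $\sM^D(\sT)$ exists and coincides with the right‑hand side of \eqref{eqn:Yd}.

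I expect the only step requiring genuine work to be the direct Riemann integrability of $\phi$; but, thanks to the geometric identities in the first step — which make $T_{-\eps}$ split exactly along the tiles and reduce $h$ at small scales to $V(G,\eps)$ — this comes down entirely to the elementary decay estimate for $V(G,\eps)$ furnished by $\dim_M(\bd G,G)<D$, with no overlap terms of the kind appearing for self‑similar sets (cf.\ $R_d$ in Theorem~\ref{thm:gatz}) ever arising.
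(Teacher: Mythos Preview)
Your proof is correct and follows essentially the same route as the paper's: derive the renewal equation $V(T,\eps)=h(\eps)+\sum_i r_i^d\,\ind{(0,r_ig]}(\eps)V(T,\eps/r_i)$ via the scaling identity, observe that $h(\eps)=V(G,\eps)$ for $\eps<g\min_i r_i$ so that the hypothesis $\dim_M(\bd G,G)<D$ yields the required decay estimate, and apply the Renewal Theorem. The only cosmetic difference is that the paper invokes a pre-packaged $\eps\searrow 0$ version of the Renewal Theorem (Theorem~4.1.4 in \cite{winter08}) and thus skips your explicit substitution $\eps=g\,e^{-t}$ and the verification of direct Riemann integrability; your version is more self-contained but otherwise identical in substance.
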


Observe that the hypothesis $\dim_M (\bd G, G)<D$ in Theorem~\ref{thm:main} is equivalent to the following assertion:  There are constants $\gamma, c>0$ such that, for each $0<\eps\le g$,
\begin{equation}\label{eqn:VG}
|V(G,\eps)|\le c \eps^{d-D+\gamma}.
\end{equation}
(Indeed, $\dim_M (\bd G, G)<D$ implies that there is some $\gamma>0$ such that $\dim_M (\bd G,G)<D-\gamma<D$ meaning that $\eps^{D-d-\gamma}V(G,\eps)$ is bounded as $\eps \searrow 0$ and vice versa.) The estimate \eqref{eqn:VG} is exactly the assumption required for the Renewal Theorem. But such an assumption is not only necessary to apply the Renewal Theorem, it is also very natural. It is clear that for self-similar tilings satisfying $\dim_M (\bd G, G)>D$ the conclusion of Theorem~\ref{thm:main} is not true. Since $\bd G\subset \bd T$, the Minkowski dimension of the tiling is at least $\dim_M (\bd G, G)$ in this case and thus strictly greater than $D$.

Note that it is easy to construct self-similar tilings, which do not satisfy the assumption $\dim_M (\bd G, G)<D$, see Example~\ref{ex:Koch} below. Hence this assumption cannot be omitted. However, such examples are kind of artificial. We will see below that the assumption is satisfied for all reasonable tilings, including all that have been studied previously in the literature. In particular, it is satisfied for all compatible tilings as well as for all tilings with monophase or pluriphase generators (independent of compatibility).

\begin{proof}[Proof of Theorem~\ref{thm:main}]
  We use a version of the Renewal Theorem adapted to limits as $\eps\searrow 0$ formulated in \cite[Theorem~4.1.4]{winter08}. Without loss of generality, we can assume that $g=1$. (The general case follows from scaling arguments. Alternatively, Theorem~4.1.4 could easily be reformulated for arbitrary $g>0$, see Remark~\ref{rem:renewal}.)

  We apply it to the functions
  $f(\eps):=V(T,\eps)$ and $\varphi_d(\eps):=h(\eps)$. First, it is easily seen, that due to the definition of $h$, the following renewal equation holds for each $\eps>0$:
  \begin{equation}
h(\eps)=f(\eps)-\sum_{i=1}^N r_i^{d} \ind{(0,r_ig]}(\eps)  f(\eps/r_i).
\end{equation}
Indeed, this equation is transparent from the relation
\begin{align*} 
  V(S_i T, \eps)=\lambda_d((S_i T)_{-\eps})=\lambda_d(S_i(T_{-\eps/r_i}))=r_i^d \lambda_d(T_{-\eps/r_i})= r_i^d V(T,\eps/r_i)= r_i^d f(\eps/r_i).
\end{align*}
It remains to show that the hypotheses on $\varphi_d=h$ in \cite[Theorem~4.1.4]{winter08} are satisfied. Since $f$ is continuous in $\eps$, it is obvious that $h$ is piecewise continuous with at most finitely many discontinuities. Moreover, it is easily seen from \eqref{h-def}, that the estimate \eqref{eqn:VG} holds similarly with $h(\eps)$ instead of $V(G,\eps)$. Indeed, for $\eps<\min_i r_i g$, the function $h$ coincides with $V(G,\cdot)$, i.e.\
  \begin{align} \label{eqn:h-G-relation}
    h(\eps)=V(T,\eps)-\sum_{i=1}^N V(S_iT,\eps)=V(G,\eps).
  \end{align}
   Taking into account that $|h(\eps)|$ is bounded by some absolute constant on any fixed interval $[a,b]$ with $a>0$ (e.g., by $(N+1) \lambda_d(T)$), the validity of the estimate \eqref{eqn:VG} for $h$ is just a matter of adapting the constant $c$.
\end{proof}
\begin{remark} \label{rem:renewal}
  It is an easy exercise to check that, for any $a>0$,
  $$
  \int_0^a \eps^{D-d-1} \varphi_{d,a}(\eps)\ d\eps= \int_0^1 \eps^{D-d-1} \varphi_{d,1}(\eps)\ d\eps,
  $$
  where $\varphi_{d,a}(\eps):=f(\eps)-\sum_{i=1}^N r_i^{d} \ind{(0,r_ia]}(\eps)  f(\eps/r_i)$, $\eps>0$. This clarifies that formula \eqref{eqn:Yd} is valid for an arbitrary constant $g$ (not just for the inradius), provided the same constant $g$ is used in the indicator functions in the definition of $h$. It was just a convenience, to use the inradius $g$ as $a$ in the proof.
  Since the continuity properties of $\varphi_{d,a}$ are the same for any $a>0$ and since an estimate of the type $|\varphi_{d,a}(\eps)|\le c \eps^{d-D+\gamma}$ holds either simultaneously for all $a>0$ or not at all, it is obvious that \cite[Theorem~4.1.4]{winter08} can be formulated with a general constant $a>0$ instead of $a=1$ inserted simultaneously in the indicator functions $\ind{(0,r_ia]}$ in the definition of $\varphi_d$ and as the upper bound of the integration interval in the conclusion.
\end{remark}

\begin{figure}
\begin{minipage}{120mm}
  \includegraphics[width=120mm]{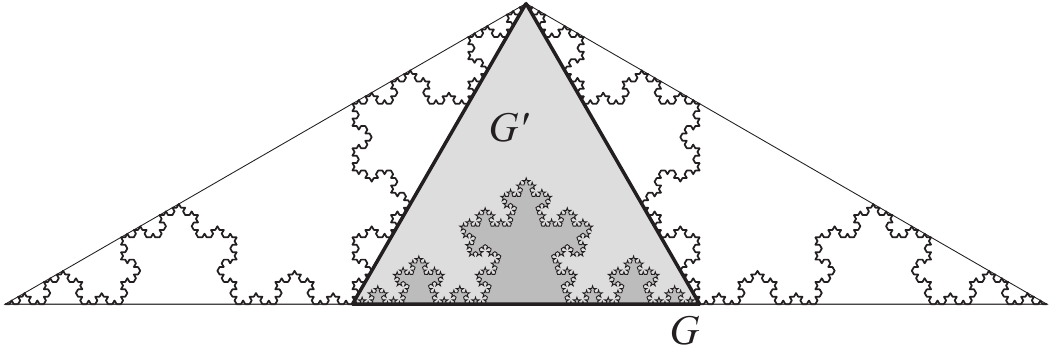}
 \end{minipage}
\begin{minipage}{65mm}
\end{minipage}
\caption{\label{fig:KC} The interior $O$ of its convex hull is a feasible open set for the Koch curve. The generator of the tiling $\sT(O)$ is the gray equilateral triangle $G$. If $G$ is replaced by the open subset $G'$ of $G$ above the fractal curve, another tiling $\sT(O')$ of some other feasible set $O'$ is generated, cf.~Example~\ref{ex:Koch}.}
\end{figure}

\begin{exm}(\emph{A tiling with $\dim_M (\bd G,G)>D$.})\label{ex:Koch}
Let $F$ be the Koch curve (generated with two mappings) and let $O$ be the interior of its convex hull. Let $G$ be the generator of the tiling $\sT(O)$. $G$ is the grey equilateral triangle in Figure~\ref{fig:KC}. Now we construct a new tiling by modifying $G$ as follows: Replace the base of $G$ by some Koch type curve of dimension $\Delta> D$ and let $G'$ be the open set bounded by this curve and the remaining two sides of $G$. Take $G'$ as the generator of a new tiling. It is easy to see that the set $T':=\bigcup_{\sigma\in\Sigma_N^*} S_\sigma G'$ is a feasible open set for the Koch curve and that $G'$ is the generator of the tiling $\sT(T')$. Note that $T'$ is also the union set of the tiles. Obviously, $\dim_M(\bd G', G')\geq\Delta$ and thus also $\dim_M(\bd T',T')\geq\Delta>D$.
\end{exm}




\paragraph{{\bf Reformulation in terms of the generator.}}
In the setting of Theorem~\ref{thm:main}, the only assumption on the generator is that $\dim_M (\bd G, G)<D$.
In this general situation one cannot expect formulas as explicit as the ones derived in the monophase case in \cite{LPW2}. But as in this case, the (average) Minkowski content of $\sT$ can be described completely in terms of the generator $G$ (and the contraction ratios of the IFS). To demonstrate this, we start with the expression derived in Theorem~\ref{thm:main} for the average Minkowski content of $\sT$ (as well as for its Minkowski content in the nonlattice case). Rearranging the integrals slightly and using the second equality in \eqref{eqn:h-G-relation}, we get:
\begin{align}
   \eta \asM^D(\sT) &= \int_0^g \eps^{D-d-1} h(\eps)\ d\eps= \int_0^g \eps^{D-d-1} \left(V(T,\eps)-\sum_{i=1}^N \ind{(0,r_i g ]}(\eps) V(S_i T, \eps)\right)\ d\eps \notag\\
   &= \int_0^g \eps^{D-d-1} \left(V(T,\eps)-\sum_{i=1}^N V(S_i T, \eps)+ \sum_{i=1}^N \ind{(r_i g,g]}(\eps) V(S_i T, \eps)\right)\ d\eps \notag\\
   &= \int_0^g \eps^{D-d-1} V(G,\eps)\ d\eps +  \sum_{i=1}^N  \int_{r_i g}^g \eps^{D-d-1} V(S_i T, \eps)\ d\eps\,. \label{eqn:Yd2}
\end{align}
 Now we apply the substitution $\tilde\eps=\eps/r_i$ to the $i$-th integral in the second sum. Employing that $V(T,\tilde\eps)=V(T,g)$ for $\tilde\eps\geq g$, we obtain
\begin{align}
  \eta \asM^D(\sT) &= \int_0^g \eps^{D-d-1} V(G,\eps)\ d\eps + \frac{V(T,g)}{d-D} g^{D-d} \left(1- \sum_{i=1}^N r_i^d \right)\\
   &= \int_0^g \eps^{D-d-1} V(G,\eps)\ d\eps + \frac{V(G,g)}{d-D} g^{D-d},
\end{align}
where we have used the relation $V(T,g)=V(G,g)\zeta_\sL(d)=V(G,g)(1- \sum_{i=1}^N r_i^d)^{-1}$ for the second equality, cf.\ e.g.~\cite[Def.~3.1]{LPW2}. Now observe that
\begin{align}
  \int_g^\infty V(G,\eps)\eps^{D-d-1}\ d\eps= \frac{V(G,g)}{d-D} g^{D-d},
\end{align}
which allows to derive the following elegant formula for the (average) Minkowski content of an arbitrary self-similar tiling $\sT$.
\begin{cor} \label{cor:main}
Let $\sT=\sT(O)$ be a self-similar tiling generated on a feasible set $O$ such that the generator $G$ satisfies $\dim_M(\bd G, G)<D$.
Then the average Minkowski content is determined completely in terms of the volume function $V(G,\eps)$ of the generator $G$ via the formula
   \begin{align} \label{eqn:mink-gen}
  \asM^D(\sT)&=\frac 1\eta \int_0^\infty \eps^{D-d-1} V(G,\eps)\ d\eps.
\end{align}
In the nonlattice case, also the Minkowski content of $\sT$ is given by \eqref{eqn:mink-gen}.
\end{cor}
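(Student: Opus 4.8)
The plan is to take the conclusion of Theorem~\ref{thm:main} as the starting point and to reorganise the resulting integral; the only genuinely geometric ingredient is a decomposition of the inner parallel volume of $T$ along the tiling. By \eqref{eqn:Yd} it suffices to show that $\int_0^g\eps^{D-d-1}h(\eps)\,d\eps=\int_0^\infty\eps^{D-d-1}V(G,\eps)\,d\eps$, since the left-hand side is $\eta\,\asM^D(\sT)$ (and, in the nonlattice case, also $\eta\,\sM^D(\sT)$). Note that the standing hypothesis $\dim_M(\bd G,G)<D$, i.e.\ the bound \eqref{eqn:VG}, makes the right-hand integral absolutely convergent near $0$.

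First I would isolate the geometric fact that, since the tiles $S_\sigma G$ are pairwise disjoint open sets whose boundaries are contained in $\bd T$ (a structural feature of self-similar tilings, see \cite{GeometryOfSST}), for every point $x$ in a tile $S_\sigma G$ one has $d(\bd T,x)=d(\bd(S_\sigma G),x)$. Two consequences follow. The inner $\eps$-parallel set of $T$ splits along $T=G\,\sqcup\,\bigsqcup_{i=1}^N S_iT$, so that
\[
V(T,\eps)=V(G,\eps)+\sum_{i=1}^N V(S_iT,\eps)\qquad\text{for all }\eps>0;
\]
and, since the largest tile is $G$ itself with inradius $g$, the relevant volume functions stabilise, $V(G,\eps)=\lambda_d(G)=V(G,g)$ and $V(T,\eps)=\lambda_d(T)=V(T,g)$ for every $\eps\ge g$. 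Together with the scaling relation $V(S_iT,\eps)=r_i^dV(T,\eps/r_i)$ recorded in the proof of Theorem~\ref{thm:main}, these are all the facts the computation needs.

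Next, using $\ind{(0,r_ig]}=\ind{(0,g]}-\ind{(r_ig,g]}$ on $(0,g]$, the displayed decomposition rewrites \eqref{h-def} as $h(\eps)=V(G,\eps)+\sum_{i=1}^N\ind{(r_ig,g]}(\eps)\,V(S_iT,\eps)$ for $0<\eps\le g$, whence
\[
\int_0^g\eps^{D-d-1}h(\eps)\,d\eps=\int_0^g\eps^{D-d-1}V(G,\eps)\,d\eps+\sum_{i=1}^N\int_{r_ig}^g\eps^{D-d-1}V(S_iT,\eps)\,d\eps.
\]
In the $i$-th correction integral I would substitute $\eps=r_iu$, apply the scaling relation and then $V(T,u)=V(T,g)$ for $u\ge g$, reducing it to an elementary power-function integral; after summation over $i$ the correction term equals a multiple of $V(T,g)\bigl(1-\sum_i r_i^d\bigr)$, which by evaluating the decomposition at $\eps=g$ equals $V(G,g)$. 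Hence the correction term collapses to $\frac{1}{d-D}V(G,g)g^{D-d}$. Since $D<d$ in the tiling situation, stabilisation of $V(G,\cdot)$ gives $\int_g^\infty\eps^{D-d-1}V(G,\eps)\,d\eps=\frac{1}{d-D}V(G,g)g^{D-d}$ as well, so the correction term is exactly the tail missing from $\int_0^g$, and the two pieces combine to $\int_0^\infty\eps^{D-d-1}V(G,\eps)\,d\eps$; this is \eqref{eqn:mink-gen}, and the nonlattice assertion follows in the same way.

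I expect the one real point to be the splitting $V(T,\eps)=V(G,\eps)+\sum_iV(S_iT,\eps)$: one must be sure that passing from a tile to $T$ does not bring the interior of $T$ closer to the tile's boundary, i.e.\ that inner distances measured within a tile agree with those measured within $T$. This is precisely where the geometry of self-similar tilings from \cite{GeometryOfSST} is used; the remainder is a change of variables, one elementary integral, and bookkeeping.
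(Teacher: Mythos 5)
Your proof is correct and follows essentially the same route as the paper: starting from \eqref{eqn:Yd}, splitting $h$ via $V(T,\eps)=V(G,\eps)+\sum_i V(S_iT,\eps)$, substituting in the correction integrals, and recognising the resulting term $\frac{V(G,g)}{d-D}g^{D-d}$ as the tail $\int_g^\infty\eps^{D-d-1}V(G,\eps)\,d\eps$. The only (harmless) cosmetic differences are that you justify the tile-wise volume decomposition and the identity $V(T,g)(1-\sum_i r_i^d)=V(G,g)$ directly, where the paper invokes \eqref{eqn:h-G-relation} and the geometric zeta function identity from \cite{LPW2}.
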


\begin{remark}
  Note that $\dim_M (\bd B, B)\geq d-1$ for any nonempty bounded open set $B\subset\R^d$. Hence, the assumption $\dim_M(\bd G, G)<D$ in Theorem~\ref{thm:main} and Corollary~\ref{cor:main} implies in particular, that $D>d-1$. This is an assumption made in earlier work on self-similar tilings, see e.g.~\cite{LPW1,LPW2}. It is now naturally implied by the present hypothesis on $G$. The parallel sets of self-similar tilings are not so well suited to the study of self-similar sets of dimensions smaller than $d-1$. However, this limitation can be overcome by replacing the parallel volume $V(G,\eps)$ with the function $\lambda_d(F_\eps\cap G)$ for which similar results are derived below in Theorems~\ref{thm:Minkowski-measurability-fractals2} and \ref{thm:main2}.
  \end{remark}

\paragraph{{\bf Monophase generators.}} Now we look at the situation studied in \cite{LPW2} when the generator of the tiling is monophase. Recall that a set is called monophase, if its inner parallel volume has a polynomial representation:
\begin{align}\label{eqn:def-prelim-Steiner-like-formula}
    V(\gen,\ge) = \sum_{k=0}^{d-1} \gk_k(\gen) \ge^{d-k},
    \qq\text{ for } 0 < \ge \leq \genir,
  \end{align}
  where the coefficients $\gk_k(G)$ are some real numbers depending only on $G$.

We point out, that in \cite{LPW2} the generator was assumed to be connected, which is not necessary. More precisely, in \cite{LPW2} the connected components of the set $G=O\setminus \setmap K$ are regarded as the generators, allowing a tiling to have more than one generator, and the results are restricted to tilings with a single connected generator. However, the connectedness is not used in the proofs. The results in \cite{LPW2} remain true for any (that is, not necessarily connected) monophase generator.

The representation \eqref{eqn:def-prelim-Steiner-like-formula} implies that
\begin{align} \label{eqn:kappa_d-1}
  \sM^{d-1}(\bd G, G)=\lim_{\eps \searrow 0} \eps^{-1}V(G,\eps)=\lim_{\eps \searrow 0}\sum_{k=0}^{d-1} \gk_k(\gen) \ge^{d-k-1}=\kappa_{d-1}(G)
\end{align}
from which the relation $\dim_M (\bd G, G)\leq d-1$ is easily seen. The reverse inequality is true for any bounded open subset $G$ of $\bR^d$ and so we have $\dim_M(\bd G, G)= d-1$.
\begin{remark}
   The following argument shows that $\kappa_{d-1}(G)$ is strictly positive, which allows to conclude $\dim_M(\bd G, G)= d-1$ directly from \eqref{eqn:kappa_d-1} and provides an interpretation of this coefficient as the surface area of $G$: Observe that one has $V'(G,\eps)=\Ha^{d-1}(\bd G_{-\eps}\cap G)$ for each $\eps>0$ for which the derivative $V'(G,\eps)$ exists), see e.g.~\cite{RW09}. Since here $V(G,\eps)$ is a polynomial in $\eps$, $V'(G,\eps)$ exists for all $\eps>0$, and computing the derivative yields that
   $$
   \kappa_{d-1}(G)=\lim_{\eps \searrow 0} V'(G,\eps)=\lim_{\eps \searrow 0}\Ha^{d-1}(\bd G_{-\eps}\cap G).
   $$


\end{remark}

Thus, assuming that $D>d-1$ (an assumption present in all results concerning tube formulas for self-similar tilings and in particular in the Minkowski measurability results obtained in \cite{LPW2}),  the hypothesis $\dim_M(\bd G, G)<D$ is satisfied and Theorem~\ref{thm:main} and Corollary~\ref{cor:main} apply.
Combining \eqref{eqn:Yd2} of Corollary~\ref{cor:main} with the representation \eqref{eqn:def-prelim-Steiner-like-formula}, we get the following expression for the average Minkowski content of $\sT$ (as well as for its Minkowski content in the nonlattice case):
\begin{align} \label{eqn:Mink-monophase}
   \eta \asM^D(\sT) &= \int_0^g \eps^{D-d-1} V(G,\eps)\ d\eps+\int_g^\infty \eps^{D-d-1} V(G,\eps)\ d\eps\notag\\
   &= \int_0^g \eps^{D-d-1} \sum_{k=0}^{d-1} \gk_k(\gen) \ge^{d-k}\ d\eps+\int_g^\infty \eps^{D-d-1} V(G,g)\ d\eps \notag\\
   &= \sum_{k=0}^{d-1} \frac{\kappa_k(G)}{D-k} g^{D-k} +  \frac{V(G,g)}{d-D} g^{D-d},
\end{align}
where for the last integral we used that, for $\tilde\eps\geq g$, one has $V(G,\tilde\eps)=V(G,g)$.

Using again the representation \eqref{eqn:def-prelim-Steiner-like-formula} for $\eps=g$ and combining the coefficients with the same $k$, we arrive at
\begin{align*}
   \eta \asM^D(\sT)&= \frac 1{d-D}\sum_{k=0}^{d-1} \frac{d-k}{D-k} \kappa_k(G) g^{D-k}=\frac 1{d-D}\Gamma_D(G),
\end{align*}
where the function $\Gamma_s(G)$, defined in \cite[Def.~4.6]{LPW2}, is given by
\begin{align}\label{eqn:Md(G)}
    \Gamma_s(\gen)
    := \sum_{k=0}^{d-1} \frac{d-k}{s-k} \kappa_k(\gen)\genir^{s-k}, s\in\bC.
  \end{align}
Hence we have proved the following statement:
\begin{cor} \label{cor:monophaseR}
Let $\sT=\sT(O)$ be a self-similar tiling in $\R^d$ with $D>d-1$. Assume that the generator $G$ is monophase.  
Then the average Minkowski content of $\sT$ is given by
 \begin{align}
  \label{eqn:Mink-mono}
  \asM^D(\sT)=\frac 1\eta \frac 1{d-D} \Gamma_D(G)\,.
\end{align}
In the nonlattice case, also the Minkowski content of $\sT$ is given by \eqref{eqn:Mink-mono}.
\end{cor}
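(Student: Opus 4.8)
The statement follows by feeding the monophase hypothesis into the general formula of Corollary~\ref{cor:main}, after which only an elementary integral computation remains. First I would check that Corollary~\ref{cor:main} is applicable. By \eqref{eqn:kappa_d-1} the polynomial representation \eqref{eqn:def-prelim-Steiner-like-formula} forces the limit $\sM^{d-1}(\bd G,G)=\kappa_{d-1}(G)$ to exist, whence $\dim_M(\bd G,G)\le d-1$; since $\dim_M(\bd B,B)\ge d-1$ for every nonempty bounded open $B\subset\R^d$, in fact $\dim_M(\bd G,G)=d-1$, and the hypothesis $D>d-1$ then yields $\dim_M(\bd G,G)<D$. Thus Theorem~\ref{thm:main} applies: $\asM^D(\sT)$ exists, is strictly positive, and by Corollary~\ref{cor:main} equals $\frac{1}{\eta}\int_0^\infty\eps^{D-d-1}V(G,\eps)\,d\eps$.

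Next I would evaluate this integral by splitting it at the inradius $g$ of $G$. On $(0,g]$ I insert $V(G,\eps)=\sum_{k=0}^{d-1}\kappa_k(G)\eps^{d-k}$ from \eqref{eqn:def-prelim-Steiner-like-formula} and integrate term by term; the integral $\int_0^g\eps^{D-k-1}\,d\eps$ converges since $D-k\ge D-(d-1)>0$ and contributes $\kappa_k(G)g^{D-k}/(D-k)$. On $[g,\infty)$ the inner parallel set stabilizes, $G_{-\eps}=G$ for $\eps\ge g$ (no point of $G$ is farther than $g$ from $\bd G$), so $V(G,\eps)=V(G,g)$ is constant there and the tail $\int_g^\infty\eps^{D-d-1}\,d\eps$ converges (here $D<d$) to $g^{D-d}/(d-D)$. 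Adding the two pieces reproduces \eqref{eqn:Mink-monophase}:
\[
\eta\,\asM^D(\sT)=\sum_{k=0}^{d-1}\frac{\kappa_k(G)}{D-k}\,g^{D-k}+\frac{V(G,g)}{d-D}\,g^{D-d}.
\]

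Finally I would eliminate $V(G,g)$ using \eqref{eqn:def-prelim-Steiner-like-formula} at $\eps=g$, so that the last term becomes $\frac{1}{d-D}\sum_{k=0}^{d-1}\kappa_k(G)g^{D-k}$, and then merge the two contributions with equal index $k$ via $\frac{1}{D-k}+\frac{1}{d-D}=\frac{d-k}{(D-k)(d-D)}$. This gives $\eta\,\asM^D(\sT)=\frac{1}{d-D}\sum_{k=0}^{d-1}\frac{d-k}{D-k}\kappa_k(G)g^{D-k}=\frac{1}{d-D}\Gamma_D(G)$ with $\Gamma_D(G)$ as in \eqref{eqn:Md(G)}, which is the claimed formula for $\asM^D(\sT)$. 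For the nonlattice case no extra argument is needed: by the nonlattice clause of Theorem~\ref{thm:main} and Corollary~\ref{cor:main} the very same integral computes $\sM^D(\sT)$, so the identical chain of equalities applies. I do not expect a genuine obstacle here; the only points requiring care are the convergence of the two halves of the integral at $0$ and at $\infty$, which is exactly where the standing assumptions $d-1<D<d$ are used.
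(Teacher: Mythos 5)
Your proposal is correct and follows essentially the same route as the paper: verify $\dim_M(\bd G,G)=d-1<D$ from the polynomial representation so that Theorem~\ref{thm:main} and Corollary~\ref{cor:main} apply, split $\int_0^\infty \eps^{D-d-1}V(G,\eps)\,d\eps$ at the inradius $g$, integrate the polynomial on $(0,g]$ and the constant tail on $[g,\infty)$, and recombine via $\frac{1}{D-k}+\frac{1}{d-D}=\frac{d-k}{(D-k)(d-D)}$ to obtain $\frac{1}{d-D}\Gamma_D(G)$. This matches the paper's computation in \eqref{eqn:Mink-monophase} and the lines following it, including the treatment of the nonlattice case.
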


Note that the right hand side of \eqref{eqn:Mink-mono} is precisely the expression derived in \cite[Theorem~4.8]{LPW2} for the (average) Minkowski content of $\sT$. (Observe that $\frac 1\eta= \res[D]{\zeta_\sL(s)}$, cf.~\cite[eq.~(4.7)]{LPW2}.) Thus, with the help of renewal theory, we have recovered in a rather simple way the results in \cite{LPW2} on the Minkowski measurability of self-similar tilings with a monophase (but not necessarily connected) generator, including the precise formula, except for the proof of the non-measurability in the lattice case.


\medskip

\paragraph{{\bf Pluriphase generators.}} In \cite{TFCD} an open set $G$ (with inradius $g$) was called \emph{pluriphase}, if its inner parallel volume has a piecewise polynomial representation, that is, there exists a partition $0=\eps_0<\eps_1<\eps_2<\ldots<\eps_m=g$ such that
\begin{align}\label{eqn:def-pluriphase}
    V(\gen,\ge) = \sum_{k=0}^{d-1} \gk^\ell_k(\gen) \ge^{d-k},
    \qq\text{ for } \eps_{\ell-1} < \ge \leq \eps_\ell, \ell=1,\ldots,m,
  \end{align}
  where the coefficients $\gk^\ell_k(G)$ are some real numbers depending only on $G$. It is shown in \cite{Kocak2} that convex polytopes are pluriphase. Polytopes occur frequently as generators of self-similar tilings and constitute an important class of examples. (It is easily seen that also generators which consist of several (disjoint) convex polytopes are pluriphase.) Similarly as in the monophase case, one has
 $  \ge^{d-1-d} V(\gen,\ge) \to \gk^1_{d-1}(G)$  as $\eps \searrow 0$ implying again $\dim_M(\bd G, G)=d-1$.
Thus, provided $D>d-1$, the hypothesis $\dim_M(\bd G, G)<D$ is satisfied and Theorem~\ref{thm:main} and Corollary~\ref{cor:main} apply. Plugging the representation \eqref{eqn:def-pluriphase} into the general formula \eqref{eqn:mink-gen}, it is now a simple computation to derive the following formula for the average Minkowski content of a self-similar tiling $\sT$ with a pluriphase (but not necessarily connected) generator as well as for its Minkowski content in the nonlattice case:
\begin{align} \label{eqn:Mink-pluriphase}
    \eta\asM^D(\sT) &= \sum_{k=0}^{d-1} \frac 1{D-k} \left(\sum_{\ell=1}^{m-1}(\gk^\ell_k(G)-\gk^{\ell+1}_k(G))\eps_\ell^{D-k}+ \kappa^m_k(G) g^{D-k}\right) +  \frac{V(G,g)}{d-D} g^{D-d}.
\end{align}
Note that the case $m=1$ is the monophase case in which the above formula reduces to the formula in \eqref{eqn:Mink-monophase}.
Similarly as in the the monophase case, one can use that $V(G,g)=\sum_{k=0}^{d-1} \gk^m_k(G)g^{d-k}$ to incorporate the last term in formula \eqref{eqn:Mink-pluriphase} into the first sum and derive the equivalent formula \eqref{eqn:Mink-pluriphase2} below. Hence we have proved the following statement:
\begin{cor} \label{cor:pluriphase}
Let $\sT=\sT(O)$ be a self-similar tiling in $\R^d$ with $D>d-1$. Assume that the generator $G$ is pluriphase.  
Then the average Minkowski content of $\sT$ is given by
\begin{align} \label{eqn:Mink-pluriphase2}
 \asM^D(\sT) &= \frac 1\eta \sum_{k=0}^{d-1} \frac 1{D-k} \left(\sum_{\ell=1}^{m-1}(\gk^\ell_k(G)-\gk^{\ell+1}_k(G))\eps_\ell^{D-k}
   +  \frac{d-k}{d-D}\kappa^m_k(G) g^{D-k}\right).
\end{align}
In the nonlattice case, also the Minkowski content of $\sT$ is given by \eqref{eqn:Mink-pluriphase2}.
\end{cor}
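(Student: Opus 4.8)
The plan is to obtain both formulas as a direct consequence of Corollary~\ref{cor:main}, so that the only real work is evaluating the integral $\frac1\eta\int_0^\infty\eps^{D-d-1}V(G,\eps)\,d\eps$ when $V(G,\cdot)$ is pluriphase. I would begin by checking that the hypothesis of Theorem~\ref{thm:main} and Corollary~\ref{cor:main} is in force: since $\eps^{-1}V(G,\eps)\to\kappa^1_{d-1}(G)$ as $\eps\searrow0$ by \eqref{eqn:def-pluriphase}, one gets $\dim_M(\bd G,G)\leq d-1$, and the reverse inequality holds for every nonempty bounded open set, so $\dim_M(\bd G,G)=d-1<D$ under the standing assumption $D>d-1$. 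Hence \eqref{eqn:mink-gen} applies and it remains to compute its right-hand side.

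Next I would split the integral along the partition $0=\eps_0<\eps_1<\cdots<\eps_m=g$ underlying \eqref{eqn:def-pluriphase}, plus the tail $(g,\infty)$. On $(\eps_{\ell-1},\eps_\ell]$ we have $\eps^{D-d-1}V(G,\eps)=\sum_{k=0}^{d-1}\kappa^\ell_k(G)\,\eps^{D-k-1}$, and since $k\leq d-1<D$ each exponent $D-k-1$ exceeds $-1$, so the integral over this subinterval equals $\sum_{k=0}^{d-1}\frac{\kappa^\ell_k(G)}{D-k}\bigl(\eps_\ell^{D-k}-\eps_{\ell-1}^{D-k}\bigr)$. Summing over $\ell=1,\dots,m$, interchanging the two finite sums, and rearranging the resulting telescoping sum so as to collect the coefficient of each $\eps_\ell^{D-k}$ (using $\eps_0^{D-k}=0$) produces the contribution $\sum_{k=0}^{d-1}\frac1{D-k}\bigl(\sum_{\ell=1}^{m-1}(\kappa^\ell_k(G)-\kappa^{\ell+1}_k(G))\eps_\ell^{D-k}+\kappa^m_k(G)g^{D-k}\bigr)$. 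For the tail, $V(G,\eps)=V(G,g)$ for all $\eps\geq g$ (the inner parallel volume is constant beyond the inradius), so $\int_g^\infty\eps^{D-d-1}V(G,g)\,d\eps=\frac{V(G,g)}{d-D}g^{D-d}$, the improper integral converging since $D<d$. Adding the two contributions and dividing by $\eta$ gives \eqref{eqn:Mink-pluriphase}.

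Finally, to reach the streamlined form \eqref{eqn:Mink-pluriphase2}, I would insert $V(G,g)=\sum_{k=0}^{d-1}\kappa^m_k(G)g^{d-k}$, which is \eqref{eqn:def-pluriphase} evaluated on the last interval at $\eps=g=\eps_m$, into the tail term; this rewrites $\frac{V(G,g)}{d-D}g^{D-d}=\frac1{d-D}\sum_{k=0}^{d-1}\kappa^m_k(G)g^{D-k}$, and merging it with the $\kappa^m_k(G)g^{D-k}$ term already present turns its coefficient $\frac1{D-k}$ into $\frac1{D-k}+\frac1{d-D}=\frac1{D-k}\cdot\frac{d-k}{d-D}$, which is exactly the form appearing in \eqref{eqn:Mink-pluriphase2}. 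The nonlattice assertion requires nothing extra: Corollary~\ref{cor:main} already states that in the nonlattice case $\sM^D(\sT)$ exists and is given by the same integral \eqref{eqn:mink-gen}, so the identical computation yields the same closed form. There is no serious obstacle here beyond careful bookkeeping of the telescoping sum and the boundary terms; a useful sanity check is the case $m=1$, where the partition is trivial and the formula must collapse to the monophase expression \eqref{eqn:Mink-monophase}.
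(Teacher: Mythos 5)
Your proposal is correct and follows exactly the route the paper takes: verify $\dim_M(\bd G,G)=d-1<D$ so that Corollary~\ref{cor:main} applies, plug the pluriphase representation into \eqref{eqn:mink-gen}, split the integral over the partition and the tail $(g,\infty)$, telescope, and finally absorb the tail term via $V(G,g)=\sum_k\kappa^m_k(G)g^{d-k}$ to reach \eqref{eqn:Mink-pluriphase2}. You merely write out the "simple computation" that the paper leaves implicit, and your bookkeeping (telescoping, exponents $D-k-1>-1$, and the identity $\frac1{D-k}+\frac1{d-D}=\frac1{D-k}\cdot\frac{d-k}{d-D}$) is all accurate.
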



\medskip

\paragraph{\bf Compatible tilings and the Minkowski content of self-similar sets.} For a self-similar set $F\subset\R^d$ and a feasible set $O$ for $F$, let $\sT=\sT(O)$ denote the self-similar tiling generated on $O$. Write $K:=\cl{O}$. Assume that $\sT$ satisfies the \emph{compatibility condition}, that is, assume that $\bd G\subset F$ or, equivalently, that $\bd K\subset F$, see \cite[Theorem~6.2]{GeometryOfSST}. This condition has been shown in \cite[again Thm~6.2]{GeometryOfSST} to be necessary and sufficient for the following disjoint decomposition to hold for all $\eps>0$:
\begin{align} \label{eqn:compat2}
F_\eps=T_{-\eps}\cup (K_{\eps}\setminus K),
\end{align}
where $T=\bigcup_{R\in\sT} R$ is the union set of the tiling.
In case of compatibility, the tiling can be used directly to study the parallel volume of $F$. Some results on the Minkowski measurability of $F$ have been obtained in \cite[Thm.~5.4]{LPW2} using compatible tilings, see also \cite{Kom}, where the tiling idea is used implicitly. Here we strengthen these results by showing that for compatible tilings $\sT(O)$ constructed on a \emph{strong} feasible set $O$, the contribution of the sets $K_{\eps}\setminus K$ can be neglected and that the Minkowski contents of $F$ and $\sT$ always coincide. In particular, no assumption on the Minkowski measurability of $K$ is needed, and for the equality of the  Minkowski contents of $F$ and $\sT$ the generator need not be monophase.
More precisely, we have the following results.
\begin{theorem}[Minkowski measurability of compatible self-similar fractals]
  \label{thm:Minkowski-measurability-fractals}
Let $F$ be a self-similar set in $\bR^d$ with Minkowski dimension $D \in (d-1,d)$ satisfying OSC. %
Assume there exists a strong feasible open set $O$ for $F$ such that
 $ \bd O \subset F$, i.e.\ such that the generated tiling $\sT=\sT(O)$ is compatible.

Then the ($D$-dim.) outer Minkowski content of $K=\overline{O}$ is zero, i.e.\ $\sM^D(K,K^c)=0$. Moreover, the average Minkowski contents of $F$ and $\sT$ coincide, i.e.,
\begin{align} \label{eq:asM-F-T}
  \asM^D(F)=\asM^D(\sT),
\end{align}
and both are given by the finite and positive expression in \eqref{eqn:mink-gen} (as well as by \eqref{eqn:Yd}). 

Furthermore, $F$ is Minkowski measurable if and only if $\sT$ is Minkowski measurable. In this case, the Minkowski content of $F$ (as well as that of $\sT$) is again given by the expression in \eqref{eqn:mink-gen}.
\end{theorem}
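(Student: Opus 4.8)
The plan is to isolate a single estimate --- that the outer parallel volume of $K=\cl O$ is of strictly lower order --- and to deduce everything else from it together with Gatzouras' Theorem~\ref{thm:gatz} and Theorem~\ref{thm:main}. First I would record that, by the disjoint decomposition \eqref{eqn:compat2} furnished by compatibility,
\[
\lambda_d(F_\eps)=V(T,\eps)+\lambda_d(K_\eps\setminus K),\qquad \eps>0,
\]
so that $\eps^{D-d}\lambda_d(F_\eps)$ and $\eps^{D-d}V(T,\eps)$ differ exactly by $\eps^{D-d}\lambda_d(K_\eps\setminus K)$. The whole theorem then follows once one proves
\[
\lambda_d(K_\eps\setminus K)=O(\eps^{\,d-D+\gamma})\qquad\text{for some }\gamma>0,
\]
which in particular is the assertion $\sM^D(K,K^c)=0$. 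Indeed, granting it, $\eps^{D-d}\lambda_d(K_\eps\setminus K)\to0$ as $\eps\searrow0$, hence so does its logarithmic average; combining this with the existence statements of Gatzouras' Theorem (giving $\asM^D(F)=X_d$, and $\sM^D(F)=X_d$ in the nonlattice case) and of Theorem~\ref{thm:main} with Corollary~\ref{cor:main} applied to $\sT$ (giving $\asM^D(\sT)=\tfrac1\eta\int_0^\infty\eps^{D-d-1}V(G,\eps)\,d\eps$, finite and positive, together with the analogue for $\sM^D(\sT)$ in the nonlattice case), one reads off $\asM^D(F)=\asM^D(\sT)$, equal to the expression in \eqref{eqn:mink-gen} (hence also to \eqref{eqn:Yd}). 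Moreover $\sM^D(F)$ exists iff $\lim_{\eps\searrow0}\eps^{D-d}V(T,\eps)$ exists iff $\sM^D(\sT)$ exists, all three values then coinciding; since a pointwise limit, when it exists, must equal the logarithmic average, this common value is the finite positive number \eqref{eqn:mink-gen}, and the equivalence of Minkowski measurability follows. (One must also check that Theorem~\ref{thm:main} applies, i.e.\ $\dim_M(\bd G,G)<D$; since $\bd G\subseteq\bd O\cup\bigcup_i S_i(\bd O)$ and similarities preserve Minkowski dimension, this too reduces to $\dim_M(\bd O)<D$.)

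So the whole proof rests on $\dim_M(\bd O)<D$, which then also bounds $\lambda_d(K_\eps\setminus K)\le\lambda_d\bigl((\bd O)_\eps\bigr)$ because $\bd K\subseteq\bd O$. I expect this to be the main obstacle, and it is the only point where one genuinely needs $O$ to be a \emph{strong} feasible set. My plan for it is a pattern-avoidance argument. Strong feasibility provides $x_0\in O\cap F$ and a ball $B(x_0,\varrho)\subseteq O$; picking a sufficiently long word $\tau\in\Sigma_N^*$ with $x_0\in S_\tau F$ (so that also $\diam(S_\tau K)<\varrho$) yields an ``escape word'' $\tau$ with $S_\tau K\subseteq O$. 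On the other hand, feasibility gives $S_\sigma O\subseteq O$ for every word $\sigma$, so $\bd O$ is disjoint from every $S_\sigma O$; combined with $F\subseteq K$, $S_\nu K\subseteq K$ and $S_\tau K\subseteq O$, this forces $\bd O\cap S_\rho F=\emptyset$ whenever the word $\rho$ contains $\tau$ as a factor. Using compatibility in the form $\bd O\subseteq F$ and writing $F=\bigcup_{\rho\in\Sigma(\eps)}S_\rho F$ for the cut-set $\Sigma(\eps)=\{\rho\in\Sigma_N^*:\ r_\rho\le\eps<r_{\rho^-}\}$ (with $\rho^-$ the word $\rho$ with its last letter deleted), it follows that $\bd O$ is already covered by those pieces $S_\rho F$, $\rho\in\Sigma(\eps)$, whose index word avoids the factor $\tau$.

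It then remains to count such words. An elementary block argument (using only $\sum_{|b|=|\tau|}r_b^D=1$) gives $\sum_{|\rho|=n,\ \rho\ \text{avoids}\ \tau}r_\rho^D\le C\,(1-r_\tau^D)^{\lfloor n/|\tau|\rfloor}=O(\theta^n)$ with $\theta:=(1-r_\tau^D)^{1/|\tau|}\in(0,1)$. Since every $\rho\in\Sigma(\eps)$ satisfies $r_\rho>\eps\min_i r_i$ and has length comparable to $\log(1/\eps)$, summing over the boundedly many admissible lengths yields $\#\{\rho\in\Sigma(\eps):\rho\ \text{avoids}\ \tau\}\le C'\eps^{-D+\gamma_0}$ with $\gamma_0=\ln\theta/\ln(\min_i r_i)>0$; as each such $S_\rho F$ has diameter at most $\eps\,\diam F$, this is a covering estimate giving $\overline{\dim}_M(\bd O)\le D-\gamma_0<D$, as needed. (If preferred, one could instead invoke any available result that under the strong open set condition the boundary of a feasible set has Minkowski dimension strictly below $D$; the argument above keeps the proof self-contained.) The routine points --- that $\Sigma(\eps)$ is a complete antichain with $\sum_{\rho\in\Sigma(\eps)}r_\rho^D=1$, the exact length bounds for $\Sigma(\eps)$, and the passage from $\overline{\dim}_M(\bd O)<D$ to the required order estimate on $\lambda_d((\bd O)_\eps)$ --- I would only indicate briefly.
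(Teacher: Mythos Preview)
Your argument is correct and follows the same overall architecture as the paper: use the decomposition \eqref{eqn:compat2}, show the outer term $\lambda_d(K_\eps\setminus K)$ is of strictly lower order, and invoke Theorem~\ref{thm:main}/Corollary~\ref{cor:main}. The difference lies in how the key estimate is obtained. The paper cites a ready-made bound (Lemma~\ref{lem:key}, i.e.\ \cite[Corollary~5.6.3]{winter08}): for a strong feasible $O$ one has $\lambda_d(F_\eps\cap((\setmap O)^c)_\eps)\le c\,\eps^{d-D+\gamma}$. From this and the inclusions $K_\eps\setminus K\subset(\setmap O)^c$ together with $K_\eps\setminus K\subset F_\eps$ (by compatibility), the paper reads off $\sM^D(K,K^c)=0$ in one line; the same lemma and the inclusion $G_{-\eps}=F_\eps\cap G\subset F_\eps\cap((\setmap O)^c)_\eps$ give $\dim_M(\bd G,G)<D$. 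You instead prove $\dim_M(\bd O)<D$ from scratch via the escape-word/pattern-avoidance argument, which is essentially how such lemmas are established in the first place; this makes your proof self-contained at the cost of some extra length. Your invocation of Gatzouras' Theorem is harmless but redundant: once $\asM^D(\sT)$ exists and the difference term tends to zero, the existence of $\asM^D(F)$ and the equality follow directly from the decomposition.

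One small slip: the words in $\Sigma(\eps)$ do \emph{not} have boundedly many lengths when the $r_i$ are unequal; the length ranges over an interval of width $\asymp\log(1/\eps)$. This does no harm, since summing the bound $\sum_{|\rho|=n,\ \rho\text{ avoids }\tau}r_\rho^D\le C\theta^n$ over all $n\ge n_-\asymp\log(1/\eps)/\log(1/r_{\min})$ is a convergent geometric series dominated by $C'\theta^{n_-}=C'\eps^{\gamma_0}$. Alternatively, pass from each $\rho\in\Sigma(\eps)$ to its prefix $\rho'$ of length $n_-$ (prefixes of $\tau$-avoiding words are again $\tau$-avoiding) and use the antichain identity $\sum_{\rho\in\Sigma(\eps):\ \rho'\text{ prefix of }\rho}r_\rho^D=r_{\rho'}^D$.
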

Combining this result with \cite[Theorem~4.8]{LPW2}, we obtain the following strengthening of \cite[Theorem~5.4]{LPW2} in the monophase case.
\begin{cor} \label{cor:monophase}
  If in addition to the hypothesis in Theorem~\ref{thm:Minkowski-measurability-fractals}, the generator $G$
  is assumed to be monophase, then the set $F$ is Minkowski measurable if and only if it is nonlattice.
\end{cor}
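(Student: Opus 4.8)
The plan is to obtain the corollary by combining Theorem~\ref{thm:Minkowski-measurability-fractals} with the already-available lattice/nonlattice dichotomy for tilings, so that essentially nothing new needs to be proved here. The crucial point is that Theorem~\ref{thm:Minkowski-measurability-fractals} supplies the equivalence ``$F$ is Minkowski measurable $\iff$ $\sT=\sT(O)$ is Minkowski measurable'', which reduces the question about $F$ to the corresponding question about the associated compatible tiling $\sT$, and the latter is governed by a lattice/nonlattice dichotomy.

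First I would check that the monophase hypothesis fits the framework. By \eqref{eqn:kappa_d-1} and the remark following it, a monophase generator satisfies $\dim_M(\bd G, G)=d-1$, and since $D\in(d-1,d)$ by the standing hypothesis of Theorem~\ref{thm:Minkowski-measurability-fractals}, the condition $\dim_M(\bd G,G)<D$ required for Theorem~\ref{thm:main}, Corollary~\ref{cor:main} and Corollary~\ref{cor:monophaseR} holds automatically. For the ``if'' direction, suppose $F$ is nonlattice. Since the lattice/nonlattice property of a self-similar set depends only on the contraction ratios $r_1,\dots,r_N$ of the IFS, the tiling $\sT$ is nonlattice as well; hence by the nonlattice part of Theorem~\ref{thm:main} (equivalently Corollary~\ref{cor:main}) the Minkowski content $\sM^D(\sT)$ exists and equals the strictly positive and finite value $\frac1\eta\int_0^\infty\eps^{D-d-1}V(G,\eps)\,d\eps$ from \eqref{eqn:mink-gen}, so $\sT$ is Minkowski measurable; by Theorem~\ref{thm:Minkowski-measurability-fractals}, $F$ is then Minkowski measurable as well (with the same value as its Minkowski content).

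For the ``only if'' direction I would argue by contraposition: assuming $F$ is lattice, by Theorem~\ref{thm:Minkowski-measurability-fractals} it suffices to show that $\sT$ is not Minkowski measurable. This is precisely \cite[Theorem~4.8]{LPW2}: for a self-similar tiling with $D>d-1$ and a monophase generator, in the lattice case the normalized inner parallel volume $\eps^{D-d}V(T,\eps)$ oscillates as $\eps\searrow 0$ and fails to converge, so $\sM^D(\sT)$ does not exist (while $\asM^D(\sT)$ does, and is given by \eqref{eqn:Mink-mono}). Hence $F$ is not Minkowski measurable either. The main obstacle in the whole argument --- the non-convergence in the lattice case --- is thus not addressed here at all but imported as a black box from \cite{LPW2}; on the side of the present paper the only things to verify are the routine facts that the monophase assumption is a condition on $V(G,\cdot)$ alone and hence affects neither strongness nor compatibility of $O$, and that the hypotheses of the cited theorems genuinely hold, as recorded in the previous paragraph.
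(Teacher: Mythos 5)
Your proposal is correct and follows essentially the same route as the paper: reduce the question about $F$ to the question about the compatible tiling $\sT$ via the equivalence in Theorem~\ref{thm:Minkowski-measurability-fractals}, and then invoke the lattice/nonlattice dichotomy for monophase tilings from \cite[Theorem~4.8]{LPW2} as a black box. The extra verifications you record (that monophase forces $\dim_M(\bd G,G)=d-1<D$, and that the nonlattice direction can alternatively be read off from Theorem~\ref{thm:main}) are consistent with the paper but not needed beyond what its two-line proof already uses.
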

\begin{proof}
  In \cite[Theorem~4.8]{LPW2}, it is shown that a self-similar tiling $\sT$ (with a monophase generator $G$) is Minkowski measurable if and only if it is nonlattice. Hence, by Theorem~\ref{thm:Minkowski-measurability-fractals}, the same must hold for $F$.
\end{proof}

The proof of Theorem~\ref{thm:Minkowski-measurability-fractals} relies heavily on the following estimate obtained in \cite{winter08} for strong open sets:
\begin{lemma}[{\cite[Corollary~5.6.3]{winter08}}] \label{lem:key}
Let $O$ be a strong feasible open set of an IFS $\{S_1,\ldots,S_N\}$ in $\R^d$ with similarity dimension $D$. Then there exist some constants $c,\gamma>0$ such that, for all $\eps\in(0,1)$,    \begin{align}\label{eqn:V-est}
   \lambda_d(F_\eps\cap ((\setmap O)^c)_\eps)\leq c\eps^{d-D+\gamma}.
   \end{align}
\end{lemma}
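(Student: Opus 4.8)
The plan is to prove the estimate by a covering-and-counting argument: cover $F$ by cylinders $S_\sigma F$ of size comparable to $\eps$, observe that only the cylinders lying near the boundary $\bd\setmap O$ can contribute to $F_\eps\cap((\setmap O)^c)_\eps$, and show that the $D$-dimensional mass of these ``boundary cylinders'' decays like a positive power of $\eps$. The strong open set condition enters at exactly this last point and is the source of the gain $\gamma$.

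First I would set up the covering. Fix $\eps\in(0,1)$ and let $W(\eps):=\{\sigma=\sigma_1\cdots\sigma_n\in\Sigma_N^*:r_\sigma\le\eps<r_{\sigma_1\cdots\sigma_{n-1}}\}$ be the usual stopping antichain, so that $F=\bigcup_{\sigma\in W(\eps)}S_\sigma F$, each $(S_\sigma F)_\eps$ has volume $\le c\,\eps^d$, and $\sum_{\sigma\in W(\eps)}r_\sigma^D\le 1$. Since $F\subset\cl{\setmap O}$, a point of $F_\eps\cap((\setmap O)^c)_\eps$ lies within $3\eps$ of $\bd\setmap O\subset\setmap\,\bd O=\bigcup_i S_i\bd O$; hence a cylinder contributes only if $\cl{S_\sigma O}$ meets the $c\eps$-neighbourhood of $\bd\setmap O$. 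Writing $\mathcal B(\eps)$ for the set of contributing words, this gives $\lambda_d(F_\eps\cap((\setmap O)^c)_\eps)\le c\,\eps^d\,\#\mathcal B(\eps)$, and it remains to show $\sum_{\sigma\in\mathcal B(\eps)}r_\sigma^D\le c\,\eps^{\gamma}$.

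The strong open set condition supplies a \emph{permanent escape} property. Choose $x_0\in F\cap O$ and $\rho>0$ with $B(x_0,\rho)\subset O$, and then a word $\nu$ (of some fixed length $m$) with $x_0\in\cl{S_\nu O}$ and $\diam\cl{S_\nu O}<\rho/2$; then $\cl{S_\nu O}\subset B(x_0,\rho/2)$ lies at distance $\delta_0\ge\rho/2>0$ from $\bd O$. Taking $m$ large makes $\delta_0/r_\nu$ as large as we wish. The key point is that \emph{inside} any first-level piece $S_{\sigma_1}O$ the set $\setmap O$ is full, so $\bd\setmap O$ can be reached from a subcylinder only across $S_{\sigma_1}\bd O$; consequently, for every $\sigma$, the descendant $\sigma\nu$ satisfies $d(\cl{S_{\sigma\nu}O},\bd\setmap O)\ge r_\sigma\delta_0>0$, and since this distance only increases on passing to further subwords, the whole subtree below $\sigma\nu$ stays away from $\bd\setmap O$ at all finer scales. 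I would encode ``boundary'' scale-consistently as $\mathcal B_n:=\{\sigma\in\Sigma_N^n:\cl{S_\sigma O}\cap(\bd\setmap O)_{c_0 r_\sigma}\neq\emptyset\}$, with $c_0$ chosen so small that $c_0<\delta_0/r_\nu$ (guaranteeing $\sigma\nu\notin\mathcal B_{|\sigma\nu|}$) yet large enough to capture all contributors to $\mathcal B(\eps)$.

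For the count, set $b_n:=\sum_{\sigma\in\mathcal B_n}r_\sigma^D$. Membership in $\mathcal B_n$ is inherited by ancestors, so every word of $\mathcal B_{n+m}$ extends one of $\mathcal B_n$, while by the escape property the extension $\sigma\nu$ of any $\sigma\in\mathcal B_n$ is \emph{not} in $\mathcal B_{|\sigma\nu|}$; using $\sum_{|\tau|=m}r_\tau^D=1$ this yields $b_{n+m}\le(1-r_\nu^D)\,b_n$, hence $b_n\le\theta^n$ with $\theta:=(1-r_\nu^D)^{1/m}<1$. Mapping each $\sigma\in\mathcal B(\eps)$ to its ancestor $\alpha$ at level $n_-:=\lceil\ln\eps/\ln r_{\min}\rceil$ (every antichain word is at least this long, and $\alpha\in\mathcal B_{n_-}$ by monotonicity) and bounding the antichain below each $\alpha$ by $\sum_{\sigma\succeq\alpha,\,\sigma\in W(\eps)}r_\sigma^D\le r_\alpha^D$ gives $\sum_{\sigma\in\mathcal B(\eps)}r_\sigma^D\le b_{n_-}\le\theta^{n_-}=\eps^{\gamma}$ with $\gamma=\ln(1/\theta)/\ln(1/r_{\min})>0$. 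Since each $r_\sigma\approx\eps$ on $W(\eps)$, this bounds $\#\mathcal B(\eps)$ by $c\,\eps^{\gamma-D}$ and hence $\lambda_d(F_\eps\cap((\setmap O)^c)_\eps)\le c\,\eps^{d-D+\gamma}$.

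The main obstacle is the clean reconciliation of constants around the escape step: the fattening $c_0 r_\sigma$ in the definition of $\mathcal B_n$ must simultaneously (i) be monotone under passing to ancestors, (ii) be wide enough that every cylinder contributing to $F_\eps\cap((\setmap O)^c)_\eps$ at scale $\eps$ is captured, and (iii) be narrow enough that $\sigma\nu$ genuinely escapes. All three can be met because $\delta_0/r_\nu\to\infty$ as $m\to\infty$, but this is where strong feasibility is indispensable: without a point of $F$ interior to $O$ there is no interior cylinder at positive relative distance from $\bd O$, the escape fails, and the boundary mass need not decay at all.
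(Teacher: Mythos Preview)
The paper does not give its own proof of this lemma; it is quoted verbatim as \cite[Corollary~5.6.3]{winter08} and used as a black box. So there is no in-paper argument to compare against.

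On its own merits your argument is sound and is essentially the standard route to this estimate (and the one taken in the cited source): a Moran antichain at scale $\eps$, the observation that only cylinders near $\bd\setmap O$ can contribute, and the SOSC-based ``escape word'' $\nu$ giving a fixed $D$-mass loss every $m$ generations. The key geometric step --- that $d(\cl{S_{\sigma\nu}O},\bd\setmap O)\ge d(\cl{S_{\sigma\nu}O},\bd S_\sigma O)=r_\sigma\,d(\cl{S_\nu O},\bd O)\ge r_\sigma\delta_0$ because $S_\sigma O\subset\setmap O$ --- is correctly identified, and the monotonicity of $\mathcal B_n$ under passing to ancestors follows immediately from $\cl{S_{\sigma\tau}O}\subset\cl{S_\sigma O}$ together with $r_{\sigma\tau}\le r_\sigma$. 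The tension you flag between the two constraints on $c_0$ is real but resolves exactly as you say: the capture constraint forces $c_0\gtrsim\diam F/r_{\min}$, a fixed number, while the escape constraint $c_0<\delta_0/r_\nu$ can be met for $|\nu|$ large since $\delta_0\ge\rho/2$ is fixed and $r_\nu\to 0$.
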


\begin{proof}[Proof of Theorem~\ref{thm:Minkowski-measurability-fractals}]
   First observe that $\setmap O\subset O$ implies $K_\eps\setminus K\subset O^c\subset(\setmap O)^c\subset((\setmap O)^c)_\eps$ for any $\eps>0$. Moreover, the compatibility assumption 
   implies $F_\eps\cap (K_\eps\setminus K)= K_\eps\setminus K$, cf.~\eqref{eqn:compat2}. Now we infer from Lemma~\ref{lem:key} (for which we need that $O$ is strong) and the above set inclusions, that the estimate \eqref{eqn:V-est} holds equally for $\lambda_d(K_\eps\setminus K)$ from which it follows immediately that $\sM^D(K,K^c)=0$ as claimed.

   Now observe that, by \eqref{eqn:compat2}, we have for all $\eps>0$,
    \begin{align*} 
     \eps^{D-d}\lambda_d(F_\eps)=\eps^{D-d}\lambda_d(T_{-\eps})+\eps^{D-d}\lambda_d(K_{\eps}\setminus K).
      \end{align*}
Taking the limit on both sides as $\eps \searrow 0$ and recalling that the second term on the right does always tend to zero, we conclude that the limit on the left hand side (that is, $\sM^D(F)$) exists if and only if the limit of the first term on the right hand side (that is, $\sM^D(\sT)$) exists, and that both numbers coincide in this case. The claimed expressions for the Minkowski contents follow immediately from Theorem~\ref{thm:main} and Corollary~\ref{cor:main}, once we have verified that $\dim_M(\bd G, G)<D$. For this we employ again Lemma~\ref{lem:key}. Observe that $G$ satisfies the inclusion $G\subset (\setmap O)^c$, which implies $G_{-\eps}\subset (\setmap O)^c\subset ((\setmap O)^c)_\eps$. Moreover, we have $G_{-\eps}=F_\eps\cap G\subset F_\eps$ from the decomposition \eqref{eqn:compat2}. Together this gives $\lambda_d(G_{-\eps})\leq \lambda_d(F_\eps\cap ((\setmap O)^c)_\eps)$ and from \eqref{eqn:V-est} we conclude
that $\eps^{\alpha-d}\lambda_d(G_{-\eps})\to 0$ for any $\alpha>D-\gamma$. Hence $\dim_M(\bd G, G)\leq D-\gamma<D$. This shows that the hypotheses of Theorem~\ref{thm:main} and Corollary~\ref{cor:main} are satisfied and the remaining assertions follow from these results.
\end{proof}

\label{page11} The assumption that $O$ is a \emph{strong} feasible set cannot easily be omitted in Theorem~\ref{thm:Minkowski-measurability-fractals}. Indeed the contribution of the outer parallel set $K_\eps\setminus K$ may be positive, as the following example shows. One can however get rid of the compatibility assumption by changing the point of view on the tilings, see the next paragraph and particularly Theorem~\ref{thm:Minkowski-measurability-fractals2} below.

\begin{figure}
\begin{minipage}{63mm}
  \includegraphics[width=63mm]{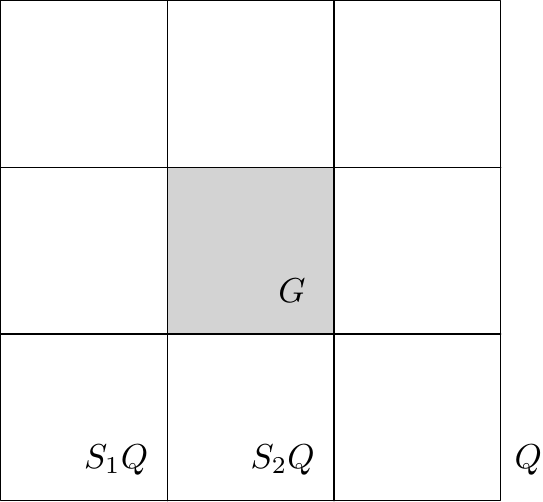}
 \end{minipage}
\begin{minipage}{60mm}
  \includegraphics[width=60mm]{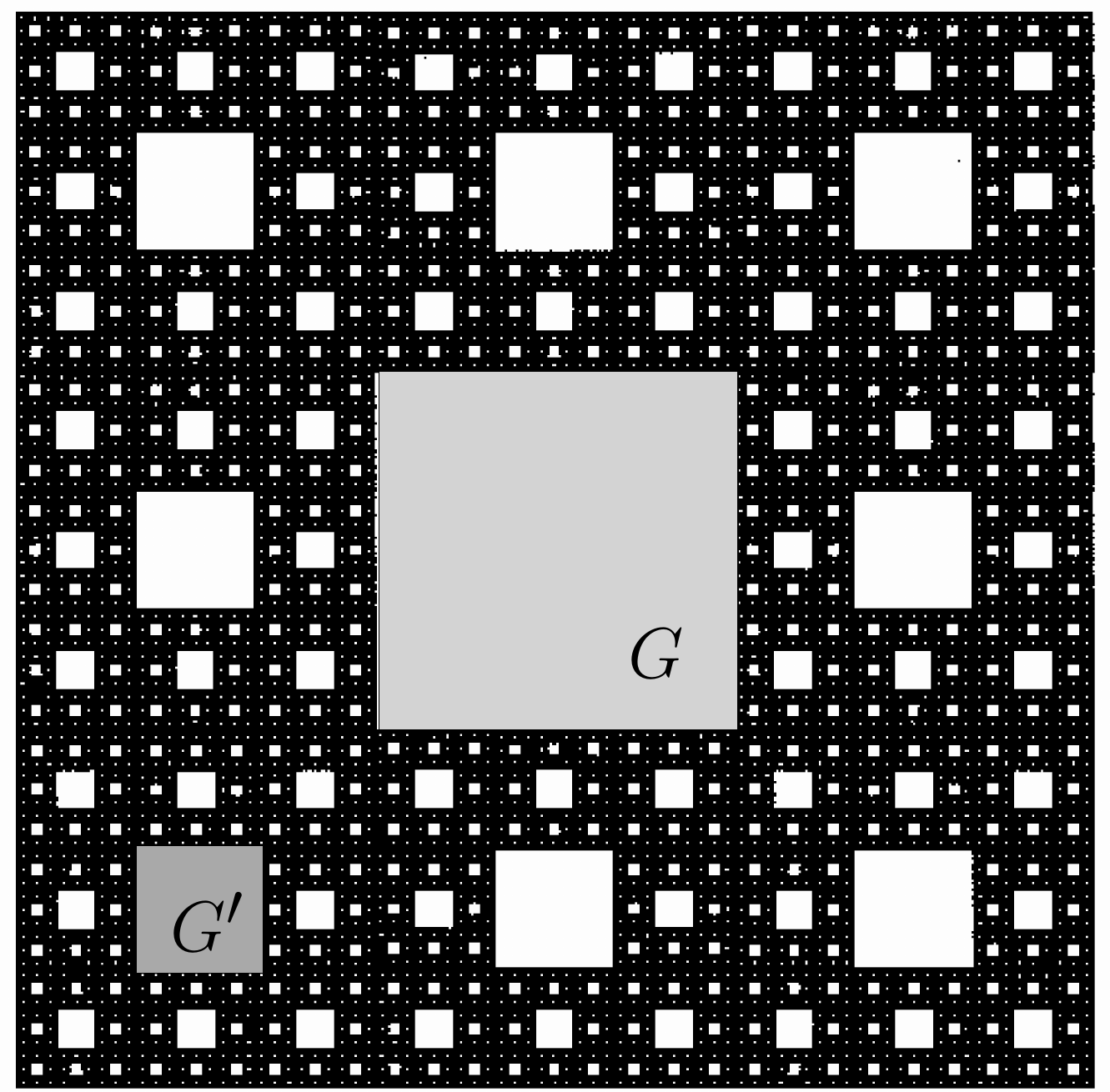}
\end{minipage}
\caption{\label{fig:SC1}(Left): The interior $O$ of the unit square $Q$ is a feasible open set for the Sierpinski carpet. The gray square $G$ in the middle is the generator of the tiling $\sT(O)$ in Example~\ref{ex:SC}. (Right): The gray set $G'$ is the generator of the tiling $\sT'=\sT(O')$ of some other feasible set $O'$ in Example~\ref{ex:SC} which is not strong.}
\end{figure}

\begin{exm} \label{ex:SC}

  Let $F\subset\R^2$ be the standard Sierpinski carpet generated by eight similarities each mapping the unit square $Q:=[0,1]^2$ to one of the eight subsquares shown in Figure~\ref{fig:SC1}. It is easily checked that the set $O=\inn(Q)$ is a strong feasible open set for $F$. The generator $G$ of the tiling $\sT=\sT(O)$ is the (open) gray middle square in Figure~\ref{fig:SC1}. Since $\bd G\subset F$, $\sT$ is compatible and hence, by Theorem~\ref{thm:Minkowski-measurability-fractals}, $\sM^D(F)=\sM^D(\sT)$.

  Now let $G':=S_1 G$ and $O':=\bigcup_{\sigma\in\Sigma_N^*} S_\sigma G'$. It is easily checked that $O'$ is a feasible open set for $F$ and that $O'\cap F=\emptyset.$ Hence $O'$ is not strong. The set $G'$ is the generator of the tiling $\sT'=\sT(O')$ constructed on $O'$.
  We claim that $\sM^D(\sT')<\sM^D(\sT)$. Indeed, applying Corollary~\ref{cor:main} and noting that
  $$
  V(G',\eps)=V(\frac13 G,\eps)=3^{-2} V(G,3\eps),
  $$
  for any $\eps>0$, we get
  \begin{align*}
    \eta \sM^D(\sT')&=\int_0^\infty \eps^{D-2-1} V(G',\eps) d\eps=\int_0^\infty \eps^{D-2-1} 3^{-2} V(G,3\eps) d\eps\\
    &=\int_0^\infty \left(\delta/3\right)^{D-2-1} 3^{-2} V(B,\delta) d\delta=\left(\frac 13\right)^{D-1} \eta \sM^D(\sT)\\
    &=\frac38 \eta \sM^D(\sT).
  \end{align*}
  Hence we have constructed a tiling $\sT'$ for the Sierpinski carpet $F$ such that $\sM^D(\sT')\neq\sM^D(F)$, showing that the conclusion of Theorem~\ref{thm:Minkowski-measurability-fractals} may fail  without the assumption that the feasible set is strong.

  To complete the picture, we point out that on the other hand it is not necessary to have a strong feasible set for the conclusion of Theorem~\ref{thm:Minkowski-measurability-fractals} to hold. The set $O'':=\bigcup_{\sigma\in\Sigma_N^*} S_\sigma G$ is a feasible set for $F$ which is not strong. It generates the same tiling as the set $O$, i.e. $\sT(O)=\sT(O'')$. Hence one has in particular $\sM^D(\sT(O''))=\sM^D(\sT(O))=\sM^D(F)$.
\end{exm}

Generalizing the construction of the set $O''$ in Example~\ref{ex:SC}, one can say that for each strong feasible set $O$ generating a compatible tiling $\sT=\sT(O)$ there exists a feasible set $O''$ which is not strong and which generates the same compatible tiling. It is not clear, whether the converse is also true: Given an arbitrary feasible set $\tilde O$ such that $\sT(\tilde O)$ is compatible, does there exist a \emph{strong} feasible set $O$ such that $\sT(O)$ is  compatible?

\begin{remark} \label{rem:compatible-limit}
It is well known that not all self-similar sets possess a feasible set such that the generated tiling is compatible. Indeed, it is shown in \cite[cf.~Theorem 7.2]{PokWi} that a self-similar set $F\subset\R^d$ (satisfying OSC and $\dim_M F<d$) possesses a compatible tiling if and only if the complement of $F$ is disconnected. Simple (self-similar) curves like the Koch curve are for instance not compatible.
So even if one was able to give a positive answer to the question raised above, the applicability of the results above would be limited to sets with a disconnected complement, excluding in particular all self-similar sets in $\R^d$ of dimension $D<d-1$. The alternative approach discussed below overcomes these limitations.
\end{remark}

\medskip

\paragraph{\bf Generator formulas for arbitrary self-similar sets.} We now suggest a slightly different approach to computing the Minkowski content of \emph{self-similar sets} using tilings which allows a simple and complete  geometric characterization of those self-similar sets for which a generator-type formula exists. We consider the tiling as a way to conveniently \emph{partition} the parallel sets of $F$. Instead of the parallel volume $V(T,\eps)$ of the union set $T$ of the tiling we will study the parallel volume $\lambda_d(F_\eps\cap T)$ of $F$ restricted to $T$, which amounts to studying the relative Minkowski content $\sM^D(F,T)$ of $F$ relative to $T$. It is even more convenient, to look instead at the relative Minkowski content $\sM^D(F,O)$ of $F$ relative to the feasible set $O$ itself. (Note that $\cl{T}=\cl{O}$.) This approach allows in fact to obtain generator type formulas for all self-similar sets for which tilings exists. We first state the main result:
\begin{theorem} \label{thm:main-main}
  Let $F$ be a self-similar set in $\R^d$ satisfying OSC and let $D<d$.

  Then there exists a strong feasible open set $O$ for $F$ such that
 the average Minkowski content (and, in case it exists, also the Minkowski content) of $F$ is given by the formula
\begin{align} \label{eqn:main-main}
  \asM^D(F)&=\frac 1\eta \int_0^\infty \eps^{D-d-1} \lambda_d(F_\eps\cap \Gamma)\ d\eps
\end{align}
where $\Gamma:=O\setminus \setmap O$.
\end{theorem}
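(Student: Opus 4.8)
The plan is to reduce Theorem~\ref{thm:main-main} to a statement about the relative Minkowski content $\sM^D(F,O)$, $\asM^D(F,O)$, and then to apply the Renewal Theorem exactly as in the proof of Theorem~\ref{thm:main}, but with the parallel volume $V(T,\eps)=\lambda_d(T_{-\eps})$ replaced by the function $\eps\mapsto\lambda_d(F_\eps\cap O)$. First I would fix any strong feasible open set $O$ for $F$ (which exists since OSC implies SOSC, cf.~\cite{Schief}) and consider the self-similar tiling $\sT=\sT(O)$ with union set $T$ and generator $\Gamma=G=O\setminus\setmap K$. I would then define
\begin{align*}
  \tilde f(\eps):=\lambda_d(F_\eps\cap O),\qquad
  \tilde h(\eps):=\tilde f(\eps)-\sum_{i=1}^N r_i^d\,\ind{(0,r_ig]}(\eps)\,\tilde f(\eps/r_i),
\end{align*}
where $g$ is the inradius of $\Gamma$. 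The crucial algebraic observation, to be verified first, is that $\tilde h(\eps)=\lambda_d(F_\eps\cap\Gamma)$ for \emph{all} $\eps>0$ (not just small $\eps$); this is the analogue of \eqref{eqn:h-G-relation}. It follows from the disjoint decomposition $O=\Gamma\,\dot\cup\,\bigcup_{i=1}^N S_i O$ together with the self-similar scaling $\lambda_d(F_\eps\cap S_iO)=\lambda_d(S_i(F_{\eps/r_i})\cap S_iO)=r_i^d\lambda_d(F_{\eps/r_i}\cap O)$, which uses the invariance $S_iF\subset F$ and, in the other direction, that on $S_iO$ the relevant parts of $F_\eps$ come only from $S_iF$ --- here the strong open set property and Lemma~\ref{lem:key} enter, so that the discrepancy between $F_\eps\cap S_iO$ and $(S_iF)_\eps\cap S_iO$ is controlled. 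I would isolate this comparison as the technical heart of the argument.

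Next I would check the hypotheses of \cite[Theorem~4.1.4]{winter08} for $\tilde h$. Piecewise continuity with finitely many jumps is immediate since $\eps\mapsto\lambda_d(F_\eps\cap O)$ is continuous (monotone, in fact) and there are only $N$ indicator functions. For the decay estimate, one uses that for $\eps<\min_i r_ig$ the function $\tilde h$ equals $\lambda_d(F_\eps\cap\Gamma)$; since $\Gamma\subset(\setmap O)^c$ one has $F_\eps\cap\Gamma\subset F_\eps\cap((\setmap O)^c)_\eps$, and Lemma~\ref{lem:key} gives $\lambda_d(F_\eps\cap\Gamma)\le c\eps^{d-D+\gamma}$ --- exactly the estimate \eqref{eqn:VG} required by the Renewal Theorem, with the constant adjusted on the compact interval $[\min_i r_ig,\ g]$ where $\tilde h$ is bounded by $(N+1)\lambda_d(O)$. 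The renewal equation $\tilde h(\eps)=\tilde f(\eps)-\sum_i r_i^d\ind{(0,r_ig]}(\eps)\tilde f(\eps/r_i)$ holds by the very definition of $\tilde h$. Hence \cite[Theorem~4.1.4]{winter08} yields that the average limit $\asM^D(F,O)=\lim_{\delta\searrow0}\frac1{|\ln\delta|}\int_\delta^1\eps^{D-d}\tilde f(\eps)\frac{d\eps}\eps$ exists and equals $\frac1\eta\int_0^g\eps^{D-d-1}\tilde h(\eps)\,d\eps$, and that in the nonlattice case the ordinary limit exists too.

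Then I would convert the integral of $\tilde h$ into the clean form $\frac1\eta\int_0^\infty\eps^{D-d-1}\lambda_d(F_\eps\cap\Gamma)\,d\eps$ by repeating verbatim the rearrangement done after Theorem~\ref{thm:main} (the computation leading to \eqref{eqn:Yd2} and then to \eqref{eqn:mink-gen}): split off the tails of the indicators, substitute $\tilde\eps=\eps/r_i$, and use that $\lambda_d(F_\eps\cap O)$ stabilizes for $\eps\ge g$ in the relevant pieces --- or, more simply, observe directly that $\int_g^\infty\eps^{D-d-1}\lambda_d(F_\eps\cap\Gamma)\,d\eps$ accounts exactly for the boundary terms, using $\lambda_d(F_\eps\cap\Gamma)=\lambda_d(\Gamma)$ for $\eps\ge g$ and that $\Gamma$ has diameter comparable to $g$. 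Since $\Gamma$ is open and $F\cap\Gamma$ may be empty, I should note that the formula is still correct: if $F\cap\Gamma=\emptyset$ then $\lambda_d(F_\eps\cap\Gamma)\to\lambda_d(\Gamma)>0$ is handled by the same tail identity, and positivity of $\asM^D(F)$ comes from Gatzouras' Theorem~\ref{thm:gatz}, not from this formula. Finally, the identification $\asM^D(F,O)=\asM^D(F)$ --- that the relative content equals the absolute one --- is the remaining point: this follows because $F_\eps\setminus O\subset F_\eps\cap O^c\subset F_\eps\cap(\setmap O)^c_\eps$ up to the outer collar, so $\lambda_d(F_\eps\setminus O)=\lambda_d(F_\eps)-\lambda_d(F_\eps\cap O)\le c\eps^{d-D+\gamma}$ by Lemma~\ref{lem:key} again (this is the point that requires $O$ strong), whence $\eps^{D-d}(\lambda_d(F_\eps)-\lambda_d(F_\eps\cap O))\to0$ and the two (average) contents coincide. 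I expect the main obstacle to be the all-$\eps$ identity $\tilde h(\eps)=\lambda_d(F_\eps\cap\Gamma)$ and the accompanying control of $F_\eps\cap S_iO$ versus $(S_iF)_\eps\cap S_iO$; once that is nailed down cleanly using Lemma~\ref{lem:key}, the rest is a direct transcription of the proofs of Theorem~\ref{thm:main} and Corollary~\ref{cor:main}.
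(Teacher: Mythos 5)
There is a genuine gap, and it sits exactly where you flagged the ``technical heart'': the identity $\tilde h(\eps)=\lambda_d(F_\eps\cap\Gamma)$ (even up to the explicit boundary terms near $\tilde g$) is \emph{false} for an arbitrary strong feasible open set $O$. Writing out $\tilde h$ via the disjoint decomposition $O=\Gamma\,\dot\cup\,\bigcup_i S_iO$ leaves the error terms $\sum_i\ind{(0,r_i\tilde g]}(\eps)\,\lambda_d\bigl((F_\eps\setminus(S_iF)_\eps)\cap S_iO\bigr)$, and Lemma~\ref{lem:key} only gives the upper bound $c\eps^{d-D+\gamma}$ on these. That bound is enough to verify the decay hypothesis of the Renewal Theorem (so $\asM^D(F,O)$ exists and equals $\frac1\eta\int_0^{\tilde g}\eps^{D-d-1}\varphi(\eps)\,d\eps$ -- this is the paper's Proposition~\ref{thm:main2}, which your argument essentially reproves), but it does \emph{not} make the error terms' contribution to that integral vanish: they are of exactly the critical order, and $\int_0^{\tilde g}\eps^{D-d-1}\cdot\eps^{d-D+\gamma}\,d\eps$ is a finite \emph{positive} quantity, not zero. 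The paper's Lemma~\ref{lem:PC}(ii) shows that whenever the error terms do not vanish identically they are bounded \emph{below} by a positive constant on some interval $[\eps_1,\eps_2)$, so the right-hand side of \eqref{eqn:main-main} is then strictly smaller than $\asM^D(F)$ and the formula fails. The error terms vanish identically, i.e.\ $F_\eps\cap S_iO=(S_iF)_\eps\cap S_iO$ for all $\eps$, precisely when $O$ satisfies the projection condition \eqref{eqn:PC} (Lemma~\ref{lem:PC}(i)); this is why Theorem~\ref{thm:Minkowski-measurability-fractals2}(ii) is an ``if and only if'' and why Theorem~\ref{thm:main-main} is only an existence statement.

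The missing idea is therefore the construction of a \emph{specific} strong feasible set satisfying \eqref{eqn:PC}: the paper takes the central open set $V_c$ of \cite{BHR} and shows (Proposition~\ref{prop:Vc}) that it is strong and that for $v\in V_c$ one has $d(S_iv,S_iF)<d(S_iv,\bigcup_{j\ne i}S_jF)$, hence $\pi_F(S_iv)\in S_iF$. With that $O$ in hand, your remaining steps are sound and match the paper: the Renewal Theorem applied to $f(\eps)=\lambda_d(F_\eps\cap O)$, the rearrangement of the boundary terms (with the cutoff $\tilde g=\sup\{d(x,F):x\in G\}$ rather than the inradius $g$ of $\Gamma$ -- your stabilization claim ``$\lambda_d(F_\eps\cap\Gamma)=\lambda_d(\Gamma)$ for $\eps\ge g$'' needs $\tilde g$, which matters when $D<d-1$), and the identification $\asM^D(F,O)=\asM^D(F)$ via Lemma~\ref{lem:key}, which you argue correctly.
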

The essential point is that in order to provide such a generator formula, a feasible set $O$ needs to satisfy the following \emph{projection condition}:
\begin{align}\label{eqn:PC}
\tag{PC}  S_i O\subset \cl{\pi^{-1}_F(S_i F)}, \quad \text{ for } i=1,\ldots,N.
\end{align}
Here $\pi_F$ denotes the metric projection onto the set $F$. Note that $\pi_F$ is defined on the set $U(F)$ of points $x\in\R^d$ which have a unique nearest point in $F$.

Our first step now is to show that there exists always a strong feasible set $O$ satisfying \eqref{eqn:PC} by giving an explicit example of such an $O$. In \cite{BHR}, the \emph{central open set} $V_c$ of an IFS is introduced and shown to be feasible. Erin Pearse observed that $V_c$ is, in fact, strong and satisfies \eqref{eqn:PC}. To recall the definition of $V_c$, let $\phi$ be the empty word
and let $\sH:=\{h=S_\sigma^{-1}S_\omega: \sigma,\omega\in\Sigma_N^{*}\setminus\{\phi\}, \sigma_1\neq\omega_1\}$ be family of all \emph{neighbor maps} of the IFS $\{S_1,\ldots,S_N\}$. Here $\sigma_1,\omega_1$ are the first letters of the finite words $\sigma, \omega$, respectively. Let $H:=\bigcup\{h(F): h\in\sH\}$ be the union of all \emph{neighbors} of $F$.
Then the central open set is defined by
\begin{align}
  \label{eq:Vc} V_c:=\{x\in\R^d: d(x,F)<d(x,H)\}.
\end{align}
\begin{prop}
  \label{prop:Vc} For any self-similar set $F\subset\R^d$ satisfying OSC, the central open set $V_c$ is a strong feasible set satisfying the projection condition \eqref{eqn:PC}.
\end{prop}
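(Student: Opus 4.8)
The plan is as follows. The feasibility of $V_c$ --- nonemptiness, boundedness, openness, and both conditions in \eqref{eqn:OSC} --- is already established in \cite{BHR}, so it only remains to check that $V_c$ is strong and that it satisfies the projection condition \eqref{eqn:PC}.

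To see that $V_c$ is strong, I would first note that $d(x,F)=0$ for $x\in F$, whence $V_c\cap F=\{x\in F:d(x,H)>0\}=F\setminus\cl H$; thus it suffices to show $F\not\subseteq\cl H$. Suppose $F\subseteq\cl H$. Then, since the distance to a set equals the distance to its closure, $d(x,H)=d(x,\cl H)\le d(x,F)$ for every $x\in\R^d$, and therefore $V_c=\{x:d(x,F)<d(x,H)\}=\emptyset$, contradicting the fact (part of feasibility) that $V_c\ne\emptyset$. Hence $V_c\cap F=F\setminus\cl H\ne\emptyset$, so $V_c$ is strong.

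For the projection condition, fix $i$ and take $x=S_iy\in S_iV_c$ with $y\in V_c$, i.e.\ $d(y,F)<d(y,H)$. Applying the similarity $S_i$ (ratio $r_i>0$) scales all these distances by $r_i$, giving $d(x,S_iF)<d(x,S_iH)$. The key step is the observation that $S_jF\subseteq S_iH$ for every $j\ne i$: the length-one words $\sigma=(i)$ and $\omega=(j)$ have distinct first letters, so $h:=S_i^{-1}S_j\in\sH$, and $S_i(h(F))=S_jF\subseteq S_iH$. Hence $d(x,S_iH)\le\min_{j\ne i}d(x,S_jF)$, and since $F=\bigcup_j S_jF$ with $S_iF\subseteq F$, this forces $d(x,F)=d(x,S_iF)$ while $d(x,S_jF)>d(x,S_iF)$ for every $j\ne i$. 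Consequently every nearest point of $F$ to $x$ must lie in $S_iF$; in particular, if $x\in U(F)$ then $\pi_F(x)\in S_iF$, i.e.\ $x\in\pi_F^{-1}(S_iF)$. This yields $S_iV_c\cap U(F)\subseteq\pi_F^{-1}(S_iF)$.

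The last step is to upgrade this to $S_iV_c\subseteq\cl{\pi_F^{-1}(S_iF)}$, which I would do via the classical fact that for any nonempty closed $F\subseteq\R^d$ the set $\R^d\setminus U(F)$ of points with more than one nearest point in $F$ is Lebesgue-null (for $x\notin F$ it is contained in the non-differentiability set of the $1$-Lipschitz map $d(\cdot,F)$, by Rademacher, while $F\subseteq U(F)$ trivially), so that $U(F)$ is dense in $\R^d$. Since $S_iV_c$ is open, $S_iV_c\cap U(F)$ is then dense in $S_iV_c$, and hence $S_iV_c\subseteq\cl{S_iV_c\cap U(F)}\subseteq\cl{\pi_F^{-1}(S_iF)}$, which is \eqref{eqn:PC}. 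Most of the argument is routine bookkeeping with the defining inequality of $V_c$ under the maps $S_i$; the only point that genuinely needs care is this final density step, so that is where I would concentrate the attention.
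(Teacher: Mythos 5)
Your proof is correct and follows essentially the same route as the paper's: feasibility (hence nonemptiness) of $V_c$ is quoted from the cited reference, strongness is deduced from the nonemptiness of $V_c$ together with the relation between $F$ and $H$, and the projection condition comes from the inclusion $S_i^{-1}S_jF\subset H$ for $j\neq i$ combined with scaling the defining inequality of $V_c$ by $S_i$. The only difference is that you make explicit the final closure step --- that $U(F)^c$ is Lebesgue-null, so $S_iV_c\cap U(F)$ is dense in the open set $S_iV_c$ and hence $S_iV_c\subset\cl{\pi_F^{-1}(S_iF)}$ --- which the paper leaves implicit after concluding $\pi_F(x)\in S_iF$; this is a welcome clarification rather than a deviation.
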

\begin{proof}
   It is shown in \cite[Theorem 1]{BHR}, that $V_c$ is feasible. This implies in particular that $V_c$ is nonempty. Let $y\in V_c$. Then there exists a point $x\in F$ such that $d(y,x)=d(y,F)$. Since the assumption $0=d(x,F)=d(x,H)$ implies $d(y,H)=d(y,F)$ which contradicts $y\in V_c$, it follows that $d(x,F)<d(x,H)$ and thus $x\in V_c$. Hence $V_c\cap F\neq\emptyset$, i.e., $V_c$ is strong.

   To see that $V_c$ satisfies \eqref{eqn:PC}, let $x\in S_i V_c$ for some $i\in\{1,\ldots,N\}$. Then there exists a point $v\in V_c$ such that $S_iv=x$. Since $S_i^{-1} S_jF\subset H$ for $j\neq i$, we have
   $$
   d(v,F)<d(v,H)\leq d(v, {\bigcup}_{\substack{j=1\\ j\neq i}}^N S_i^{-1}S_j F)=d(v, S_i^{-1}{\bigcup}_{\substack{j=1\\ j\neq i}}^N S_j F),
   $$
   and applying $S_i$ yields
   $
   d(x, S_i F)<d(x, \bigcup _{j\neq i} S_j F).
   $
  Hence $\pi_F(x)\in S_i F$, which shows that \eqref{eqn:PC} holds.
\end{proof}

Thus we can always find a strong feasible set such that the projection condition holds.
We note that due to its construction $V_c$ may be a rather complicated set and often one can find simpler strong feasible sets satisfying the projection condition. In particular, we will detail later that for any strong feasible $O$ which produces a compatible tiling the projection condition is satisfied, see Remark~\ref{rem:compat-recover}.
This allows to recover some of the previous results for the compatible case from the following general statement, which describes the precise relation between generator formulas and the projection condition and from which also Theorem~\ref{thm:main-main} is easily derived. Recall that $\sM^D(F,O)$ is the Minkowski content of $F$ relative to $O$, cf.\ \eqref{eqn:inner_Mink_def}, and $\asM^D(F,O)$ denotes the averaged counterpart of $\sM^D(F,O)$, cf.\ \eqref{eq:avMC}.

\begin{theorem}[Minkowski content of self-similar fractals - general case] \label{thm:Minkowski-measurability-fractals2}
Let $F$ be a self-similar set in $\bR^d$ satisfying OSC. Denote by $D$ its similarity dimension and let $O$ be an arbitrary strong feasible open set for $F$.
\begin{enumerate}[(i)]
\item Then the relative ($D$-dimensional) Minkowski content of $F$ relative to $O^c$ is zero, i.e.\ $\sM^D(F,O^c)=0$. As a consequence, the average Minkowski content of $F$ coincides with the average relative Minkowski content of $F$ relative to $O$, i.e.
    $$\asM^D(F)=\asM^D(F,O).$$
    Furthermore, $F$ is ($D$-dimensional) Minkowski measurable if and only if $\sM^D(F,O)$ exists and is positive and finite. In this case, $\sM^D(F)=\sM^D(F,O)$.
\item Let $\Gamma:=O\setminus \setmap O$. 
Then the average Minkowski content (and, in case it exists, also the Minkowski content) of $F$ is given by the formula
\begin{align} \label{eqn:rel-mink-gen}
  \asM^D(F)&=\frac 1\eta \int_0^\infty \eps^{D-d-1} \lambda_d(F_\eps\cap \Gamma)\ d\eps
\end{align}
if and only if $D<d$ and the projection condition \eqref{eqn:PC} holds.
\item Let $G:=O\setminus \cl{\setmap O}$. The assertion in (ii) remains true with the set $\Gamma$ in \eqref{eqn:rel-mink-gen} replaced by $G$, provided $\lambda_d(\bd G)=0$.
\end{enumerate}
\end{theorem}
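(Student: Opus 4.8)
The plan is to run the Renewal Theorem of \cite[Theorem~4.1.4]{winter08} on the function $f(\eps):=\lambda_d(F_\eps\cap O)$, exactly as it is run on $V(T,\cdot)$ in the proof of Theorem~\ref{thm:main}, using Lemma~\ref{lem:key} for all decay estimates. For part~(i): OSC gives $\setmap O\subseteq O$, hence $O^c\subseteq(\setmap O)^c\subseteq((\setmap O)^c)_\eps$, so $\lambda_d(F_\eps\cap O^c)\le\lambda_d(F_\eps\cap((\setmap O)^c)_\eps)\le c\eps^{d-D+\gamma}$ by Lemma~\ref{lem:key} (the one place where strongness of $O$ enters), whence $\eps^{D-d}\lambda_d(F_\eps\cap O^c)\le c\eps^\gamma\to0$ and $\sM^D(F,O^c)=0$. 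Splitting $\lambda_d(F_\eps)=\lambda_d(F_\eps\cap O)+\lambda_d(F_\eps\cap O^c)$ and noting that the normalized logarithmic integral of the second summand is $O(1/|\ln\delta|)$, one gets $\asM^D(F,O)=\asM^D(F)$ (the latter existing by Gatzouras' Theorem) and that $\sM^D(F)$ exists iff $\sM^D(F,O)$ does, with equal values; the measurability statement then follows since a Minkowski-measurable self-similar set under OSC has finite positive $\sM^D$.

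For part~(ii), the core is a renewal equation for $f$. By OSC, $O=\bigsqcup_iS_iO\sqcup\Gamma$ disjointly, and $\lambda_d((S_iF)_\eps\cap S_iO)=\lambda_d(S_i(F_{\eps/r_i}\cap O))=r_i^df(\eps/r_i)$, so with $\tilde\varphi_d(\eps):=\lambda_d\bigl((F_\eps\cap O)\setminus\bigcup_i((S_iF)_\eps\cap S_iO)\bigr)\ge0$ one has \[ f(\eps)=\sum_{i=1}^Nr_i^df(\eps/r_i)+\tilde\varphi_d(\eps),\qquad\eps>0. \] A direct set-theoretic analysis gives $\tilde\varphi_d(\eps)=\lambda_d(F_\eps\cap\Gamma)+\sum_iE_i(\eps)$ with $E_i(\eps):=\lambda_d(\{x\in S_iO:d(x,F)\le\eps<d(x,S_iF)\})$, and --- crucially --- shows that the set defining $\tilde\varphi_d(\eps)$ is contained in $F_\eps\cap((\setmap O)^c)_\eps$: such a point $x$ either lies in $\Gamma\subseteq(\setmap O)^c$, or lies in some $S_iO$ with nearest point $p\in F\setminus S_iF$, in which case either $[x,p]$ meets $\bd S_iO$ within distance $\eps$, or $p\in\cl{S_iO}\cap\cl{S_jO}\subseteq\bd S_iO$; in all cases $x$ lies within $\eps$ of $\bd S_iO\subseteq(\setmap O)^c$. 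Hence $\tilde\varphi_d(\eps)\le c\eps^{d-D+\gamma}$ by Lemma~\ref{lem:key}, and $\tilde\varphi_d$ is continuous and bounded (constant for large $\eps$). For $D<d$, \cite[Theorem~4.1.4]{winter08} --- after inserting the harmless cut-off indicators $\ind{(0,r_ia]}$ as in the proof of Theorem~\ref{thm:main} --- then yields $\asM^D(F,O)=\tfrac1\eta\int_0^\infty\eps^{D-d-1}\tilde\varphi_d(\eps)\,d\eps$, with $\sM^D(F,O)$ equal in the nonlattice case; the passage from the truncated to the infinite integral is the same rearrangement as in the proof of Corollary~\ref{cor:main}. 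By part~(i) the left side equals $\asM^D(F)$.

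It remains to relate $\tilde\varphi_d$ and $\lambda_d(F_\eps\cap\Gamma)$. One checks that (PC) holds iff $d(\cdot,S_iF)=d(\cdot,F)$ on $S_iO$ for every $i$ --- the ``only if'' by continuity of the two distance functions applied to the definition of (PC) --- and that this is in turn equivalent to $E_i\equiv0$, i.e.\ to $\tilde\varphi_d(\eps)=\lambda_d(F_\eps\cap\Gamma)$. Thus if $D<d$ and (PC) hold, \eqref{eqn:rel-mink-gen} is exactly the identity of the previous paragraph. Conversely, assume \eqref{eqn:rel-mink-gen}: if $D=d$ its right-hand side vanishes (since $\lambda_d(\Gamma)=\lambda_d(O)(1-\sum_ir_i^d)=0$ because $\sum_ir_i^d=1$) while $\asM^d(F)=X_d>0$, a contradiction; so $D<d$, and then subtracting \eqref{eqn:rel-mink-gen} from the renewal value gives $\int_0^\infty\eps^{D-d-1}\sum_iE_i(\eps)\,d\eps=0$, hence $E_i(\eps)=0$ for a.e.\ $\eps$ and each $i$, hence (by Fubini, $\int_0^\infty E_i(\eps)\,d\eps=\int_{S_iO}(d(x,S_iF)-d(x,F))\,dx$) $d(\cdot,S_iF)=d(\cdot,F)$ a.e.\ on $S_iO$ and therefore everywhere ($S_iO$ open, the equality set closed); at the dense set of points of $S_iO$ with a unique nearest point in $F$, that point must then lie in $S_iF$, so $S_iO\subseteq\cl{\pi_F^{-1}(S_iF)}$, i.e.\ (PC). For part~(iii): if $\lambda_d(\bd G)=0$, then $\Gamma\setminus G=O\cap\bd(\setmap O)\subseteq\bd G$ (a point of $\bd(\setmap O)$ lying in the open set $O$ is a limit of points of $O\setminus\cl{\setmap O}=G$), so $\lambda_d(\Gamma\setminus G)=0$ and $\lambda_d(F_\eps\cap\Gamma)=\lambda_d(F_\eps\cap G)$ for every $\eps$; thus (ii) carries over verbatim with $G$ in place of $\Gamma$.

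The step I expect to be the main obstacle is the geometric inclusion $(F_\eps\cap O)\setminus\bigcup_i((S_iF)_\eps\cap S_iO)\subseteq F_\eps\cap((\setmap O)^c)_\eps$: it is what makes Lemma~\ref{lem:key} applicable to the \emph{entire} inhomogeneous term $\tilde\varphi_d$ (not only to $\lambda_d(F_\eps\cap\Gamma)$), and thereby drives both directions of the equivalence in~(ii); the delicate point is the case distinction according to whether the nearest point $p$ of such an $x$ falls inside $\cl{S_iO}$.
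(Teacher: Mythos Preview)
Your proof is correct and follows essentially the same renewal-theoretic route as the paper: part~(i) is identical to the paper's argument, and for part~(ii) both you and the paper run the Renewal Theorem on $f(\eps)=\lambda_d(F_\eps\cap O)$ (the paper isolates this as Proposition~\ref{thm:main2}), establish the decay of the inhomogeneous term via the inclusion into $F_\eps\cap((\setmap O)^c)_\eps$ and Lemma~\ref{lem:key}, and then reduce to the $\Gamma$-integral.

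There is one genuine difference worth noting, in the ``only if'' direction of (ii). The paper proves a separate Lemma~\ref{lem:PC}(ii): assuming (PC) fails, it constructs by hand a ball $B(x,\tilde\delta)\subset S_jO$ and an interval $[\eps_1,\eps_2)$ on which $\lambda_d(F_\eps\setminus(S_jF)_\eps\cap S_jO)\geq c>0$, yielding a strict inequality in place of~\eqref{eqn:rel-mink-gen}. Your argument is more direct and arguably cleaner: from the vanishing of $\int_0^\infty\eps^{D-d-1}\sum_iE_i(\eps)\,d\eps$ and nonnegativity you get $E_i\equiv0$ a.e., then Fubini gives $\int_{S_iO}(d(x,S_iF)-d(x,F))\,dx=0$, hence equality of the two distance functions on all of $S_iO$ by continuity, and (PC) follows by looking at the dense set of points with unique metric projection. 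This avoids the explicit geometric construction entirely. Your treatment of the case $D=d$ is also slightly sharper: you observe that $\lambda_d(\Gamma)=\lambda_d(O)(1-\sum_ir_i^d)=0$ directly, whereas the paper allows for the (in fact vacuous) possibility $\lambda_d(\Gamma)>0$.

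One small caution on part~(iii): your claimed inclusion $\Gamma\setminus G=O\cap\bd(\setmap O)\subseteq\bd G$ is not quite immediate---a point of $O\cap\bd(\setmap O)$ is a limit of points of $O\setminus\setmap O$, but those need not lie in $O\setminus\cl{\setmap O}=G$. What you actually need (and what the paper also asserts without further detail) is only $\lambda_d(\Gamma\setminus G)=0$; since $\text{int}(\Gamma)=G$, this amounts to $\lambda_d(\bd\Gamma)=0$, which in the self-similar setting can be linked back to $\lambda_d(\bd G)=0$ via $\bd\Gamma\subseteq\bd O\cup\bd(\setmap O)\subseteq\bd O\cup\bigcup_iS_i(\bd O)$ and the relation between $\bd O$ and $\bd G$. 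This is a minor technical point, not a genuine gap in the strategy.
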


\begin{proof}[Proof of Theorem~\ref{thm:main-main}]
  By Proposition~\ref{prop:Vc}, there exists a strong feasible set $O$ satisfying \eqref{eqn:PC}. Therefore, the assertion of Theorem~\ref{thm:main-main} follows by applying part (ii) of Theorem~\ref{thm:Minkowski-measurability-fractals2} to this set $O$.
\end{proof}
For the proof of part (ii) and (iii) of Theorem~\ref{thm:Minkowski-measurability-fractals2}, we require the following statement, which is a  counterpart of Theorem~\ref{thm:main} 
for the function $\lambda_d(F_\eps\cap O)$ (instead of $V(T,\eps)$). For sets $F$ with $D<d$, the role of the inradius $g$ of the generator $G$, is now taken by following \emph{relative inradius}:
\begin{align} \label{eq:rel-inr}
  \tilde{g}:=\sup\{d(x,F):x\in G\}.
\end{align}
Note that $\tilde{g}$ is equivalently given by $\sup\{d(x,F):x\in O\}$. Indeed, one inequality is obvious from the inclusion $G\subset O$ and for the reverse inequality note that the tiling $\sT(O)$ exists in this case. For $x\in T$, we have $x\in S_\omega G$ for some $\omega\in\Sigma_N^*$ and thus $d(x,F)\leq d(x,S_\omega F)=r_w d(S_\omega^{-1}(x), F)\leq\tilde g$, since $S_\omega^{-1}(x)\in G$. For general $x\in O$, the relation $d(x,F)\leq \tilde g$ follows now from $\cl{O}=\cl{T}$.

\begin{prop} \label{thm:main2}

Let $F\subset\R^d$ be a self-similar set satisfying OSC and $\dim_M F=D<d$. Let
$O$ be a strong feasible open set. 
Then the $D$-dimensional average Minkowski content $\widetilde\sM^D(F,O)$ of $F$ relative to $O$ exists and coincides with the strictly positive value
\begin{align}\label{eqn:Yd-2}
\frac{1}{\eta} \int_0^{\tilde{g}} \eps^{D-d-1} \varphi(\eps)\ d\eps,
\end{align}
where $\eta= \sum_{i=1}^N r_i^D |\ln r_i|$ and the function $\varphi:(0,\infty)\to \bR$ is given by
\begin{equation} \label{eqn:h-def-2}
\varphi(\eps) := \lambda_d(F_\eps\cap O)-\sum_{i=1}^N \ind{(0,r_i{\tilde{g}}]}(\eps) \lambda_d((S_iF)_\eps\cap S_iO)\,.
\end{equation}
If $F$ is nonlattice, then also the Minkowski content $\sM^D(F, O)$ of $F$ relative to $O$ exists and equals the expression in \eqref{eqn:Yd-2}.
\end{prop}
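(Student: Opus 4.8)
The plan is to apply the renewal theorem adapted to the limit $\eps\searrow 0$, \cite[Theorem~4.1.4]{winter08} (in the form with a general constant $a>0$, cf.\ Remark~\ref{rem:renewal}), to the functions $f(\eps):=\lambda_d(F_\eps\cap O)$ and $\varphi_d:=\varphi$, taking $a:=\tilde g$. (Here $0<\tilde g<\infty$: it is at most $\diam(\cl{O})$, and the generator $G$, being a nonempty open set --- nonempty because $\dim_M F<d$ would otherwise force $\cl{O}=F$, impossible for open nonempty $O$ --- contains a point outside the closed set $F$.) The first step is the renewal equation
\begin{align*}
\varphi(\eps)=f(\eps)-\sum_{i=1}^N r_i^{d}\,\ind{(0,r_i\tilde g]}(\eps)\,f(\eps/r_i),\qquad \eps>0,
\end{align*}
which is immediate from \eqref{eqn:h-def-2}: as $S_i$ is a similarity of ratio $r_i$ one has $(S_iF)_\eps=S_i(F_{\eps/r_i})$, and $S_i$ being injective, $\lambda_d((S_iF)_\eps\cap S_iO)=\lambda_d\big(S_i(F_{\eps/r_i}\cap O)\big)=r_i^{d}\,\lambda_d(F_{\eps/r_i}\cap O)=r_i^{d}f(\eps/r_i)$.

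Next I would check the two hypotheses on $\varphi$ required by \cite[Theorem~4.1.4]{winter08}. Continuity: $\eps\mapsto\lambda_d(F_\eps)$ is continuous --- essentially because $\lambda_d(\{x:d(x,F)=\eps\})=0$ for every $\eps>0$, since $|\nabla d(\cdot,F)|=1$ almost everywhere on $F^c$ and hence no level set of $d(\cdot,F)$ has positive Lebesgue measure --- and the same reasoning gives continuity of $f(\eps)=\lambda_d(F_\eps\cap O)$; thus $\varphi$ is piecewise continuous with at most $N$ discontinuities, located at the points $r_i\tilde g$. The decisive hypothesis is the decay estimate $|\varphi(\eps)|\le c\,\eps^{d-D+\gamma}$. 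For $0<\eps\le\min_i r_i\tilde g$ all the indicators in \eqref{eqn:h-def-2} equal $1$, and since $S_1O,\dots,S_NO$ are pairwise disjoint and each $(S_iF)_\eps\cap S_iO$ is contained in $F_\eps\cap O$,
\begin{align*}
\varphi(\eps)=\lambda_d\Big((F_\eps\cap O)\setminus\bigcup_{i=1}^N\big((S_iF)_\eps\cap S_iO\big)\Big).
\end{align*}
The key geometric point, which is where the main work lies, is the set inclusion
\begin{align*}
(F_\eps\cap O)\setminus\bigcup_{i=1}^N\big((S_iF)_\eps\cap S_iO\big)\ \subset\ F_\eps\cap\big((\setmap O)^c\big)_\eps .
\end{align*}
To prove it, let $x$ lie in the left-hand set. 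If $x\notin\setmap O$ then $x\in(\setmap O)^c$ and we are done. If $x\in S_iO$ for some $i$, then $x\notin(S_iF)_\eps$, i.e.\ $d(x,S_iF)>\eps$, while $d(x,F)\le\eps$; since $F=\bigcup_j S_jF$, there is some $j\neq i$ and a point $p\in S_jF$ with $|x-p|\le\eps$. Since $F\subset\cl{O}$ for any feasible $O$ (iterate $\setmap\cl{O}\subset\cl{O}$), we have $p\in\cl{S_jO}$; and $\bd S_kO\subset(\setmap O)^c$ for each $k$ (an open set is disjoint from its own boundary, and the sets $S_kO$ are pairwise disjoint). A short connectedness argument applied to the segment $[x,p]$, which starts in the open set $S_iO$ and ends in $\cl{S_jO}$ with $j\neq i$, then produces a point $y\in\bd S_iO\cup\bd S_jO\subset(\setmap O)^c$ with $|x-y|\le\eps$, so $x\in((\setmap O)^c)_\eps$; and $x\in F_\eps$ is clear. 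Combining this inclusion with Lemma~\ref{lem:key} --- the only place the strongness of $O$ enters --- yields $\varphi(\eps)\le\lambda_d\big(F_\eps\cap((\setmap O)^c)_\eps\big)\le c\,\eps^{d-D+\gamma}$ for $\eps\le\min_i r_i\tilde g$; on the remaining range $\varphi$ is bounded (say by $2\lambda_d(O)$), so after enlarging $c$ the estimate holds on all of $(0,\tilde g]$.

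With both hypotheses verified, \cite[Theorem~4.1.4]{winter08} (via Remark~\ref{rem:renewal}) gives that $\widetilde\sM^D(F,O)$ exists and equals the value in \eqref{eqn:Yd-2}, and that in the nonlattice case $\sM^D(F,O)$ exists with the same value. For strict positivity it suffices that $\varphi\ge0$ --- for $\eps\le\tilde g$ one has $\sum_i\ind{(0,r_i\tilde g]}(\eps)\lambda_d((S_iF)_\eps\cap S_iO)\le\sum_i\lambda_d(F_\eps\cap S_iO)=\lambda_d(F_\eps\cap\setmap O)\le\lambda_d(F_\eps\cap O)$ --- together with $\varphi>0$ on the nonempty interval $(\tilde g\max_i r_i,\tilde g)$, where all indicators vanish so that $\varphi(\eps)=\lambda_d(F_\eps\cap O)>0$ (positive because $O$ is strong, hence $F\cap O\neq\emptyset$). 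The only genuinely delicate step in this scheme is the geometric set inclusion above; the rest is bookkeeping around the renewal theorem, entirely parallel to the proof of Theorem~\ref{thm:main}.
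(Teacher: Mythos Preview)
Your proof is correct and follows essentially the same route as the paper's: apply the renewal theorem to $f(\eps)=\lambda_d(F_\eps\cap O)$, verify the renewal equation via scaling, check piecewise continuity, and obtain the decay estimate $|\varphi(\eps)|\le c\,\eps^{d-D+\gamma}$ from Lemma~\ref{lem:key}. The only cosmetic difference is in the geometric inclusion: the paper splits $\varphi(\eps)$ as $\lambda_d(F_\eps\cap\Gamma)+\sum_i\lambda_d\big((F_\eps\setminus(S_iF)_\eps)\cap S_iO\big)$ and, using the fact $F\cap S_iO\subseteq S_iF$, observes that a nearest point $y\in F\setminus S_iF$ to $x\in S_iO$ lies in $(S_iO)^c$, so the nearest point of $(S_iO)^c$ to $x$ lies on $\bd S_iO\subset(\setmap O)^c$, giving $d(x,(\setmap O)^c)\le\eps$ in one line; your segment argument via $F\subset\cl{O}$ reaches the same boundary point and is equally valid.
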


\begin{proof}
 As in the proof of Theorem~\ref{thm:main}, we can either assume $\tilde g=1$ and apply \cite[Theorem~4.1.4]{winter08} directly to the functions
  $f(\eps):=\lambda_d(F_\eps\cap O), \eps>0$ and $\varphi$ as in \eqref{eqn:h-def-2} or apply the slight modification of this theorem discussed in Remark~\ref{rem:renewal} above.
Note that
\begin{align} \label{eqn:V-S_iT2}
  \lambda_d((S_iF)_{\eps}\cap S_i O)=\lambda_d(S_i(F_{\eps/r_i}\cap O))=r_i^d \lambda_d(F_{\eps/r_i}\cap O)= r_i^d f(\eps/r_i).
\end{align}
Therefore, the following renewal equation holds for each $\eps>0$:
  \begin{equation} \label{eqn:renwaleqn}
\varphi(\eps)=f(\eps)-\sum_{i=1}^N r_i^{d} \ind{(0,r_i{\tilde{g}}]}(\eps)  f(\eps/r_i).
\end{equation}
It remains to show that the hypotheses on $\varphi$ in \cite[Theorem~4.1.4]{winter08} are satisfied. Since $f$ is continuous in $\eps$, it is obvious from \eqref{eqn:renwaleqn} that $\varphi$ is piecewise continuous with at most finitely many discontinuities. Furthermore, for  $\eps<\min_i r_i {\tilde{g}}$,
\begin{align*}
   \varphi(\eps)&= \lambda_d(F_\eps\cap O)-\sum_{i=1}^N  \lambda_d(F_\eps\cap S_iO)+\sum_{i=1}^N  \left\{\lambda_d(F_\eps\cap S_iO) -\lambda_d((S_iF)_\eps\cap S_iO)\right\}\\
   &=\lambda_d(F_\eps\cap \Gamma)+\sum_{i=1}^N  \lambda_d\left(\left(F_\eps\setminus(S_iF)_\eps\right)\cap S_iO\right),
\end{align*}
where $\Gamma:=O\setminus \setmap O$. 
 Now observe that $\Gamma\subset (\setmap O)^c\subset((\setmap O)^c)_\eps$ and $F_\eps\setminus(S_iF)_\eps\cap S_iO\subset((\setmap O)^c)_\eps$ for any $\eps>0$.
 Indeed, since $F\cap S_iO\subseteq S_iF$, for any point $x\in F_\eps\setminus(S_iF)_\eps\cap S_iO$ there exists a point $y\in F$ such that $d(x,y)\leq\eps$, $y\notin S_i F$ and thus $y\notin S_iO$. Therefore,
 $d(x, (\setmap O)^c)\leq d(x,(S_iO)^c)\leq d(x,y)\leq\eps$, which implies $x\in((\setmap O)^c)_\eps$ and proves the second claimed set inclusion.
 The inclusions allow to apply the estimate \eqref{eqn:V-est} of Lemma~\ref{lem:key} to each of the terms in the above sum (for which we use that $O$ is strong).
 We infer that there exist constants $\gamma, c>0$ such that, for each $0<\eps\le \min_i r_i {\tilde{g}}$,
\begin{equation}\label{eqn:VG3}
\varphi(\eps)\le c \eps^{d-D+\gamma}.
\end{equation}
Since, for ${\tilde{g}}\geq\eps\geq\min_i r_i {\tilde{g}}$, the function $\varphi$
    is bounded by some absolute constant (e.g., by $(N+1) \lambda_d(O)$), the estimate \eqref{eqn:VG3} holds for all $\eps\in(0,{\tilde{g}})$ (with the constant $c$ adapted if necessary).
   It follows now from \cite[Theorem~4.1.4]{winter08}, that $\asM^D(F,O)$ exists (and in the nonlattice case also $\sM^D(F,O)$) and is given by the expression in \eqref{eqn:Yd-2}. The positivity of this expression follows from the strict positivity of the function $\varphi$.
 This completes the proof.
\end{proof}

To derive a formula for the Minkowski content in terms of the generator, the projection condition comes into play, which will allow us to derive a nicer expression for $\varphi$.
The following observations are essential for the proof of Theorem~\ref{thm:Minkowski-measurability-fractals2}.

\begin{lemma}
   \label{lem:PC} Let $F$ be a self-similar set and $O$ a feasible open set for $F$.
    \begin{itemize}
      \item[(i)] If $O$ satisfies the projection condition \eqref{eqn:PC}, then, for each $\eps>0$ and $i=1,\ldots, N$,
\begin{align}
  \label{eqn:PC2} F_\eps\cap S_i O=(S_i F)_\eps\cap S_i O\,.
\end{align}
      \item[(ii)] If \eqref{eqn:PC} does not hold for $O$, then there exist some $j\in\{1,\ldots,N\}$ and some constants $c,\eps_1,\eps_2>0$ with $\eps_1<\eps_2\leq r_j\tilde g$ such that $\lambda_d(F_\eps\setminus (S_j F)_\eps \cap S_j O )\geq c$ for all $\eps\in[\eps_1,\eps_2)$.
    \end{itemize}
\end{lemma}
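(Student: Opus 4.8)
The plan is to treat the two parts independently; both are purely metric assertions about the projection $\pi_F$, combined with elementary $1$-Lipschitz estimates for distance functions.

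\emph{Part (i).} One inclusion is trivial: $S_iF\subseteq F$ gives $(S_iF)_\eps\subseteq F_\eps$, hence $(S_iF)_\eps\cap S_iO\subseteq F_\eps\cap S_iO$. For the reverse inclusion take $x\in F_\eps\cap S_iO$. By \eqref{eqn:PC}, $x\in\cl{\pi_F^{-1}(S_iF)}$, so pick $x_n\to x$ with $x_n\in U(F)$ and $\pi_F(x_n)\in S_iF$. For each $n$, since $\pi_F(x_n)$ attains $d(x_n,F)$ and lies in $S_iF\subseteq F$,
\[
d(x_n,S_iF)\le d(x_n,\pi_F(x_n))=d(x_n,F)\le d(x_n,S_iF),
\]
so $d(x_n,S_iF)=d(x_n,F)$; letting $n\to\infty$ and using continuity of $y\mapsto d(y,A)$ gives $d(x,S_iF)=d(x,F)\le\eps$, i.e.\ $x\in(S_iF)_\eps$, and hence $x\in(S_iF)_\eps\cap S_iO$, which is \eqref{eqn:PC2}. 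The only subtlety is to pass through a sequence in $\pi_F^{-1}(S_iF)$, since $x$ itself need not lie in the domain $U(F)$ of $\pi_F$.

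\emph{Part (ii).} Suppose \eqref{eqn:PC} fails, fix $j$ with $S_jO\not\subseteq\cl{\pi_F^{-1}(S_jF)}$, and set $W:=S_jO\setminus\cl{\pi_F^{-1}(S_jF)}$, a nonempty open set. Since $d(\cdot,F)$ is $1$-Lipschitz it is differentiable a.e., and at a point of differentiability outside $F$ the nearest point in $F$ is unique; as also $F\subseteq U(F)$, the set $U(F)^c$ is Lebesgue-null and hence cannot contain the open set $W$. So choose $x_0\in W\cap U(F)$ and put $y_0:=\pi_F(x_0)$; then $y_0\notin S_jF$ by definition of $W$. Because $y_0$ is the \emph{unique} nearest point of $F$ to $x_0$ while $S_jF\subseteq F$ is compact and omits $y_0$, every point of $S_jF$ lies at distance strictly larger than $d(x_0,F)$ from $x_0$, so
\[
\delta:=d(x_0,S_jF)-d(x_0,F)>0 .
\]
Moreover $x_0\in S_jO$ gives $S_j^{-1}x_0\in O$, whence $d(x_0,S_jF)=r_j\,d(S_j^{-1}x_0,F)\le r_j\tilde g$ (recall $\tilde g=\sup\{d(x,F):x\in O\}$, cf.\ \eqref{eq:rel-inr}).

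To finish, fix $\rho\in(0,\delta/4]$ so small that the open ball $B(x_0,\rho)\subseteq S_jO$ (possible since $S_jO$ is open), and set $\eps_1:=d(x_0,F)+\rho$ and $\eps_2:=d(x_0,F)+\tfrac34\delta$. Then $0<\eps_1<\eps_2\le d(x_0,S_jF)\le r_j\tilde g$. For any $\eps\in[\eps_1,\eps_2)$ and any $x\in B(x_0,\rho)$ the $1$-Lipschitz property yields
\[
d(x,F)\le d(x_0,F)+\rho=\eps_1\le\eps
\quad\text{and}\quad
d(x,S_jF)>d(x_0,S_jF)-\rho=d(x_0,F)+\delta-\rho\ge\eps_2>\eps ,
\]
so $x\in F_\eps$ and $x\notin(S_jF)_\eps$. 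Hence $B(x_0,\rho)\subseteq\bigl(F_\eps\setminus(S_jF)_\eps\bigr)\cap S_jO$, giving $\lambda_d\bigl((F_\eps\setminus(S_jF)_\eps)\cap S_jO\bigr)\ge\lambda_d(B(x_0,\rho))=:c>0$ for every such $\eps$, which is the claim.

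I expect the only genuinely delicate point to be the handling of the metric projection in part (ii): one must first move to a point $x_0$ at which $\pi_F$ is defined — exploiting that the skeleton $U(F)^c$ is a null set and so cannot swallow the nonempty open set $W$ — and then squeeze the strict gap $\delta>0$ out of the \emph{uniqueness} of the nearest point. Everything afterwards is a routine perturbation argument, with the bound $\eps_2\le r_j\tilde g$ coming for free from $d(\,\cdot\,,S_jF)=r_j\,d(S_j^{-1}(\,\cdot\,),F)$ on $S_jO$. Part (i) is then immediate by comparison.
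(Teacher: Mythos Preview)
Your proof is correct and, in part (i), essentially identical to the paper's.

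In part (ii) the overall strategy matches the paper's (find a point with a strict gap $d(x,F)<d(x,S_jF)$, then use a $1$-Lipschitz perturbation to produce a whole ball inside $(F_\eps\setminus(S_jF)_\eps)\cap S_jO$ for a range of $\eps$), but you take a small detour that the paper avoids. The paper picks \emph{any} point $x$ in the open set $W:=S_jO\setminus\cl{\pi_F^{-1}(S_jF)}$ and asserts the strict inequality directly: if $d(x,F)=d(x,S_jF)$ there would be a nearest point $z\in S_jF$, and every point on the open segment $(z,x)$ has $z$ as its \emph{unique} nearest point in $F$, so lies in $\pi_F^{-1}(S_jF)$; letting those points tend to $x$ gives $x\in\cl{\pi_F^{-1}(S_jF)}$, a contradiction. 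Thus no restriction to $U(F)$ is needed. Your route via a.e.\ differentiability of $d(\cdot,F)$ to locate a point $x_0\in W\cap U(F)$ is valid but superfluous; membership in $W$ alone already forces the gap. The subsequent ball argument and the bound $\eps_2\le r_j\tilde g$ are the same in both proofs up to inessential choices of the cut-offs.
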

\begin{proof}
   (i) One of the set inclusions in \eqref{eqn:PC2} is obvious from $S_iF\subset F$. To see the reverse inclusion, let $x\in F_\eps\cap S_i O$.
      Then $d(x,F)\leq \eps$ and, by \eqref{eqn:PC}, $x\in\cl{\pi^{-1}_F(S_i F)}$.
   The latter means that we can find a sequence $(y_n)_n$ of points in $\pi^{-1}_F(S_i F)$ which converges to $x$ as $n\to\infty$. Since $y_n\in\pi^{-1}_F(S_i F)$ implies $d(y_n,S_i F)\leq d(y_n, F\setminus S_iF)$ for each $n\in\N$, the same must hold for the limit point $x$. Hence, since $S_iF$ is closed, there exists a point $z\in S_i F$ such that $d(x, S_iF)=|x-z|=d(x,F)\leq \eps$. But this implies $x\in(S_i F)_\eps\cap S_iO$, which completes the proof of (i).

   (ii) Assume that \eqref{eqn:PC} does not hold. Then there exists some $j\in\{1,\ldots, N\}$ and some $x\in S_j O$ such that $x\notin\cl{\pi_F^{-1}(S_j F)}$. Since this set is closed, we can find $\delta>0$ such that $B(x,\delta)\subseteq S_j O\cap \left(\cl{\pi_F^{-1}(S_j F)}\right)^c$. Let $d_1:=d(x,F)$ and $d_2:=d(x, S_j F)$. Since obviously $d_1<d_2\leq r_j\tilde g$, the numbers $\eps_1:=d_1+\frac{d_2-d_1}3$ and $\eps_2:=d_2-\frac{d_2-d_1}3$ satisfy $0<\eps_1<\eps_2<r_j\tilde g$. Now let $\tilde \delta:=\min\{\delta,(d_2-d_1)/3\}$. For any point $y\in B(x,\tilde\delta)$ and each $\eps\in[\eps_1,\eps_2)$, we have
   $$
   d(y,F)\leq d(x,F)+d(y,x)\leq d_1+\tilde\delta \leq d_1+\frac{d_2-d_1}3=\eps_1\leq \eps,
   $$
   and
   $$
   d(y,S_j F)\geq d(x,S_j F)- d(y,x)\geq d_2-\tilde\delta\geq d_2-\frac{d_2-d_1}3=\eps_2\geq \eps.
   $$
   This implies $B(x,\tilde \delta)\subset F_\eps\setminus (S_j F)_\eps$ for all $\eps\in[\eps_1,\eps_2)$. We conclude that, for all $\eps\in[\eps_1,\eps_2)$,
   $
   \lambda_d(F_\eps\setminus (S_j F)_\eps \cap S_j O ))\geq \lambda_d(B(x,\tilde \delta))=:c>0.
   $
   This completes the proof of (ii).
   \end{proof}

\begin{proof}[Proof of Theorem~\ref{thm:Minkowski-measurability-fractals2}]
   (i) The arguments for (i) are very similar to those in the proof of Theorem~\ref{thm:Minkowski-measurability-fractals}.
   The inclusion $\setmap O\subset O$ implies $O^c\subset(\setmap O)^c\subset((\setmap O)^c)_\eps$ for any $\eps>0$.
   Since $O$ is assumed to be strong, we can use Lemma~\ref{lem:key} 
   and infer that there exist some constants $c,\gamma>0$ such that for all $\eps\in(0,1)$ the estimate
   \begin{align}\label{eqn:V-est2}
   \lambda_d(F_\eps\cap O^c)\leq c\eps^{d-D+\gamma}
   \end{align}
   holds. This implies immediately that $\sM^D(F,O^c)=0$ as claimed.

   Now observe that,
   for all $\eps>0$,
    \begin{align*} 
     \eps^{D-d}\lambda_d(F_\eps)=\eps^{D-d}\lambda_d(F_\eps\cap O)+\eps^{D-d}\lambda_d(F_{\eps}\cap O^c).
      \end{align*}
Taking the limit on both sides as $\eps \searrow 0$, the second term on the right does always tend to zero. We conclude that the limit $\sM^D(F)$ on the left hand side exists if and only if the limit $\sM^D(F,O)$ of the first term on the right hand side exists, and that both numbers coincide in this case.

(ii) We first show that the formula does not hold in the case $D=d$.
Indeed, any feasible set $O$ of such a self-similar set $F$ satisfies $\cl{O}=F$, cf.~\cite[Proposition 5.4 and Corollary 5.6]{GeometryOfSST}. Therefore, $\lambda_d(F_\eps\cap O)=\lambda_d(O)$ for any $\eps>0$ and so $\sM^d(F,O)=\lambda_d(O)>0$. (Thus, by part (i), $\sM^d(F)$ exists and is strictly positive, regardless whether the set is lattice or nonlattice!) On the other hand, the set $G=O\setminus\setmap\cl{O}$ is empty, since $O\subset F=\setmap F=\cl{\setmap O}$. (The tiling is not defined in this case.) The set $\Gamma=O\setminus\setmap O$ is not necessarily empty, but it is contained in $O$ and thus $F$ is dense in $\Gamma$. It follows that $\lambda_d(F_\eps\cap\Gamma)=\lambda_d(\Gamma)$ for any $\eps>0$ and thus the integral on the right hand side of \eqref{eqn:rel-mink-gen} is either zero (in case $\lambda_d(\Gamma)=0$) or $\infty$ (in case $\lambda_d(\Gamma)>0$). In any case the right hand side does not coincide with the positive and finite Minkowski content on the left.
Hence formula \eqref{eqn:rel-mink-gen} does not hold for full dimensional self-similar sets.

For the remainder of the proof, we can thus assume $D<d$. First we apply Proposition~\ref{thm:main2}, from which the existence of $\asM^D(F,O)$ (and of $\sM^D(F,O)$ in the nonlattice case) follows as claimed but with a different expression (given in \eqref{eqn:Yd-2}). By part (i), the existence of $\asM^D(F)$ (and in the nonlattice case of $\sM^D(F)$) follows with the same expression \eqref{eqn:Yd-2}.

It remains to verify that \eqref{eqn:rel-mink-gen} holds if and only if \eqref{eqn:PC} is satisfied.
Since $(S_i F)_\eps\subset F_\eps$, the function $\varphi$ in \eqref{eqn:Yd-2} (which is given by \eqref{eqn:h-def-2}) can be rewritten as follows:
\begin{align}
   \varphi(\eps)&= \lambda_d(F_\eps\cap O)-\sum_{i=1}^N  \ind{(0,r_i{\tilde{g}}]}(\eps) \lambda_d(F_\eps\cap S_iO)+ \sum_{i=1}^N  \ind{(0,r_i{\tilde{g}}]}(\eps) \lambda_d(F_\eps\setminus(S_iF)_\eps\cap S_iO)\notag\\
   &=\lambda_d(F_\eps\cap O\setminus \setmap O)+\sum_{i=1}^N \ind{(r_i{\tilde{g}},\tilde g]}(\eps) \lambda_d(F_\eps\cap S_iO)+\sum_{i=1}^N \ind{(0,r_i{\tilde{g}}]}(\eps) \lambda_d(F_\eps\setminus(S_iF)_\eps\cap S_iO)\notag \\
   &=\lambda_d(F_\eps\cap \Gamma)+\lambda_d(O) \sum_{i=1}^N  r_i^d \ind{(r_i{\tilde{g}},\tilde g]}(\eps)+ \sum_{i=1}^N  \ind{(0,r_i{\tilde{g}}]}(\eps) \lambda_d(F_\eps\setminus(S_iF)_\eps\cap S_iO)\label{eqn:phi5}
\end{align}
for each $\eps\in(0,\tilde g]$. For the third equality, we have used that for $\eps>\tilde g$, $O\subset F_\eps$ and thus $S_iO\subset (S_iF)_\eps$, for $\eps>r_i\tilde g$. Therefore, $\lambda_d(F_\eps\cap S_iO)=\lambda_d(S_iO)=r_i^d \lambda_d(O)$, for each $\eps>r_i\tilde g$.

If we assume now, that the projection condition \eqref{eqn:PC} holds, then we can infer from Lemma~\ref{lem:PC}(i) that the last sum in the above representations of $\varphi$ vanishes.
Plugging the remaining representation of $\varphi$ into \eqref{eqn:Yd-2} and simplifying the integrals resulting from the second term, we get
\begin{align*}
  \eta \asM^D(F)&= \int_0^{\tilde{g}} \eps^{D-d-1} \lambda_d(F_\eps\cap \Gamma) d\eps + \lambda_d(O) \sum_{i=1}^N  r_i^d \int_{r_i\tilde g}^{\tilde{g}} \eps^{D-d-1} \ d\eps,\\
  &= \int_0^{\tilde{g}} \eps^{D-d-1} \lambda_d(F_\eps\cap \Gamma) d\eps + \lambda_d(O) \frac{\tilde g}{d-D}\left(1-\sum_{i=1}^N  r_i^d\right).
\end{align*}
Now observe that
$
\lambda_d(O)=\sum_{i=1}^N \lambda_d(S_iO)+\lambda_d(\Gamma)=\sum_{i=1}^N r_i^d \lambda_d(O)+\lambda_d(\Gamma)
$ and therefore,
\begin{align*}
   \lambda_d(\Gamma)=\left(1-\sum_{i=1}^N r_i^d\right) \lambda_d(O).
\end{align*}
Combining this with the observation that
\begin{align*}
   \int_{\tilde{g}}^\infty \eps^{D-d-1} \lambda_d(F_\eps\cap \Gamma) d\eps
   &=\int_{\tilde{g}}^\infty \eps^{D-d-1} \lambda_d(\Gamma) d\eps = \lambda_d(\Gamma) \frac{\tilde g^{D-d}}{d-D},
\end{align*}
we conclude that
\begin{align*}
  \eta \asM^D(F)&= \int_0^\infty \eps^{D-d-1} \lambda_d(F_\eps\cap \Gamma) d\eps,
\end{align*}
that is, formula \eqref{eqn:rel-mink-gen} holds.

For the reverse implication, we assume that \eqref{eqn:PC} does not hold and use Lemma~\ref{lem:PC}(ii), which implies that the last term in the above representation \eqref{eqn:phi5}
of $\varphi$ does not vanish for all $\eps$. There is some $j$ and an interval $[\eps_1,\eps_2)\subset(0,r_j\tilde g)$ on which the $j$-th term and thus the whole sum is bounded from below by some positive constant $c$. Plugging this into the formula  \eqref{eqn:Yd-2} for the Minkowski content, the above computations remain the same except that we have an extra term now, which is strictly positive:
\begin{align*}
  \eta \asM^D(F)&\geq \int_0^\infty \eps^{D-d-1} \lambda_d(F_\eps\cap \Gamma) d\eps + c\int_{\eps_1}^{\eps_2} \eps^{D-d-1} d\eps,
\end{align*}
Hence, formula \eqref{eqn:rel-mink-gen} does not hold in this case, which completes the proof of (ii).

(iii) If $\lambda_d(\bd G)=0$, then $\lambda_d(G)=\lambda_d(\Gamma)$ and thus $\Gamma$ can be replaced by $G$ in \eqref{eqn:rel-mink-gen}.
\end{proof}

We emphasize again that Theorem~\ref{thm:Minkowski-measurability-fractals2} does not need any compatibility assumption. The derived formulas apply to self-similar sets in $\R^d$ of any dimension $D\in(0,d]$ and are in particular not restricted to dimensions $D>d-1$.

\begin{remark}(Recovering the compatible case.)\label{rem:compat-recover}
  In case the set $O$ generates a compatible tiling, we have $\lambda_d(F_\eps\cap O)=\lambda_d(F_\eps\cap T)=V(T,\eps)$ for each $\eps>0$ and thus $\asM^D(F,O)=\asM^D(\sT)$ (as well as $\sM^D(F,O)=\sM^D(\sT)$, whenever one of these limits exists). Therefore, Theorems~\ref{thm:Minkowski-measurability-fractals} and ~\ref{thm:Minkowski-measurability-fractals2} do both apply to this situation. Note that $D<d$ is necessary for the existence of $\sT(O)$ and that the projection condition is satisfied for $O$ if $\sT(O)$ is compatible. (Indeed, for $x\in S_iT$, there is some $\omega=\omega_1\ldots,\omega_m\in\Sigma_N^*$ such that $x\in S_\omega G$. Note that $\omega_1=i$. By compatibility, $\pi_F(x)\in \bd S_\omega G\subset S_\omega F\subset S_i F$, whenever the metric projection $\pi_F(x)$ is defined, and thus $x\in \pi_F^{-1}(S_i F)$. Otherwise, that is, if $x$ is in the exoskeleton of $F$, we still have $d(x,S_iF)\leq d(x,F)$ and thus $\cl{\pi_F^{-1}(S_i F)}$. This shows $S_iT\subset \cl{\pi_F^{-1}(S_i F)}$. Since the latter set is closed, we conclude $S_iO\subset \cl{S_iO}=\cl{S_iT}\subseteq\cl{\pi_F^{-1}(S_i F)}$. Hence the projection condition holds.)
  This means that the previous results for compatible tilings, in particular Theorem~\ref{thm:Minkowski-measurability-fractals}, can be recovered from Theorem~\ref{thm:Minkowski-measurability-fractals2}.

  However, the results for general tilings discussed above cannot be recovered from Theorem~\ref{thm:Minkowski-measurability-fractals2}. In general, it makes a difference whether the parallel volume $V(T,\eps)$ of the tiling or the parallel volume $\lambda_d(F_\eps\cap T)$ of $F$ restricted to the tiling is studied.  Compatible tilings are exactly those for which the two approaches yield the same.
\end{remark}

\begin{remark} \label{rem:monophase-no-comp}
With Theorem~\ref{thm:Minkowski-measurability-fractals2} at hand it should be possible to strengthen Corollary~\ref{cor:monophase} as follows: One can drop the assumptions of compatibility and of a monophase generator and assume instead that the parallel volume $\lambda_d(F_\eps\cap G)$ is a polynomial in $\eps\in(0,\tilde g)$. Then Theorem 4.8 in \cite{LPW2} does not apply directly but the methods used in the proof of this result can be adapted to the present setting. A simple example of this situation ist provided by the modified carpet in \cite[Example~2.4.5]{winter08}. The interior of its convex hull is a strong feasible set which is not compatible and the function $\lambda_d(F_\eps\cap G)$ of the  generator $G$ of the associated tiling is a polynomial.
\end{remark}

\begin{remark}
   In Theorem~\ref{thm:Minkowski-measurability-fractals2} we only discussed the case of strong feasible sets $O$. It is clear from the discussion after Theorem~\ref{thm:Minkowski-measurability-fractals}, that this assumption cannot be omitted in general. However, similarly as Theorem~\ref{thm:Minkowski-measurability-fractals}, the statement of Theorem~\ref{thm:Minkowski-measurability-fractals2} remains true if the strong open set is replaced by a feasible set $O$ such that an estimate of the type \eqref{eqn:V-est} holds. In fact, the assumption can be weakened to the requirement $\dim_M(F,(\setmap O)^c)<D$.
\end{remark}

%

\begin{remark}
   We have shown in Theorem~\ref{thm:Minkowski-measurability-fractals2} that the generator formula \eqref{eqn:rel-mink-gen} does not hold for full dimensional self-similar sets. In this case, even the lattice-nonlattice dichotomy breaks down, as the Minkowski content $\sM^d(F)$ always exists. This clarifies that all full-dimensional sets have to be excluded from Lapidus's conjecture. (Note that the relevant part of Lapidus' original conjecture was for self-similar sets in $\R^d$ with $d-1<D<d$, see \cite[Conjecture~3]{Lap:Dundee}.) Since Minkowski contents are independent of the dimension of the ambient space, also sets which are full-dimensional with respect to their affine hull have to be excluded.
\end{remark}

\section{Fractal curvatures for self-similar tilings} \label{sec:curv}

In analogy with the results obtained above for Minkowski contents, we will now introduce and study fractal curvatures of self-similar tilings. Apart from being interesting in their own right, our main motivation is to understand their relation with fractal curvatures of self-similar sets.
%


We start by recalling the definition of fractal curvatures for compact sets and introduce the straightforward modification for tilings. For a closed set $K\subset\R^d$ and $x\in\R^d\setminus K$, let $\Sigma_K(x)$ be the set of points $a\in K$ such that $|x-a|=d(x,K)$. The point $x$ is called \emph{critical} for $K$, if $x\in\conv\Sigma_K(x)$ and a radius $\eps>0$ is called \emph{critical} for $K$ if there exists a critical point $x$ for $K$ with $d(x,K)=\eps$. Otherwise, the radius $\eps>0$ is called \emph{regular} for $K$ (or a \emph{regular value} of $K$). For sets $K\subset\bR^d$, $d\le 3$, Lebesgue almost all $\eps>0$ are regular values of $K$, see \cite{Fu85}. In higher dimensions this is not true in general. The importance of this regularity notion lies in the fact that, for regular values $\eps$ of $K$, the curvature measures of the set $K_\eps$ are well defined.
We write $\widetilde{A}$ for the {\it closure of the complement} of a set $A$. If a value $\eps$ is regular for $K$, then the set $\widetilde{K_\eps}\,(:=\widetilde{(K_\eps)})$ has positive reach, cf.~\cite{Fu85}, and the boundary of $K_\eps$ is a Lipschitz manifold of bounded curvature in the sense of \cite{RaZa1}.
 Therefore, {\it Lipschitz-Killing curvature measures} are determined for $\widetilde{K_\eps}$ (in the sense of Federer \cite{Federer} as curvature measures for sets with positive reach) and thus for $K_\eps$ via normal reflection:
 \begin{equation}
C_k(K_\eps,\mydot):=(-1)^{d-1-k}C_k(\widetilde{K_\eps},\mydot)\, ,~~k=0,\ldots,d-1\,,
\end{equation}
cf.~\cite{RaZa1}. Here the surface area ($k=d-1$) is included, which is, in fact, equivalently given by $C_{d-1}(K_\eps,\mydot):=\frac 12 \Ha^{d-1}(\bd (K_\eps)\cap\mydot)$ which extends to all distances $\eps>0$ regardless of any regularity. While $C_{d-1}(K_\eps,\mydot)$ is always positive, the other curvature measures are signed in general.
 For more details on singular curvature theory and some background see \cite{RaZa1,RaZa2} and the references therein. 

Let $F\subset\bR^d$ be a compact set and let $s\geq 0$.
Assume that almost all $\eps>0$ are regular for $F$ (implying that curvature measures $C_0(F_\eps,\cdot),\ldots,C_{d-1}(F_\eps,\cdot)$ of $F_\eps$ are defined for almost all $\eps$). It is well known that there are no critical values $\eps>\sqrt{2}\diam F$. Therefore, this is an assumption about small $\eps$. Denote by $C_k(F_\eps)$ the total mass and by $C_k^{\var}(F_\eps)$ the mass of the total variation measure of the (signed) measure $C_k(F_\eps,\cdot)$.
If the essential limit
\begin{align} \label{def:fc}
   \sC_k^s(F):=\esslim{\eps \searrow 0} \eps^{s-k} C_k(F_\eps)
\end{align}
exists, then this number is called the \emph{($s$-dimensional) $k$-th fractal curvature} of $F$.
Moreover, the \emph{average ($s$-dimensional) $k$-th fractal curvature} is the limit
\begin{align} \label{def:avfc}
   \asC_k^s(F):=\lim_{\delta \searrow 0}\frac{1}{|\ln\delta|} \int_\delta^1
\eps^{s-k} C_k(F_\eps) \frac{d\eps}{\eps}\,.
\end{align}
For self-similar (and also more general) sets $F\subset\R^d$ with $\dim_M F=D<d$, typically one has to choose $s=D$ to obtain  nontrivial limits. The existence of the (average) fractal curvatures $\sC_k^D(F)$ and $\asC_k^D(F)$ has been established in the last years for different classes of self-similar sets under various assumptions, see \cite{winter08,zaehle11,WZ,RatZa12,BZ13} and \cite{KK12,Kom,bohl13} for related results for self-conformal sets.
These results show a similar lattice-nonlattice dichotomy for fractal curvatures as the one observed for  the Minkowski content in Gatzouras' Theorem: In the non-lattice situation, $\sC_k^D(F)$ exists, while for lattice sets only the existence of $\asC_k^D(F)$ is established in general.

In analogy with relative Minkowski contents, cf.~\eqref{eqn:inner_Mink_def}, it is possible to restrict the curvature measures in \eqref{def:fc} and \eqref{def:avfc} to some set $\Omega\subset\R^d$ and define \emph{relative fractal curvatures} of $F$ relative to $\Omega$: Let $k\in\{0,\ldots,d-1\}$ and $s\geq 0$.
Whenever the limits exist, let
 \begin{align} \label{eqn:rel-fc}
   \sC_k^s(F,\Omega):=\esslim{\eps \searrow 0}\eps^{s-k} C_k \left(F_{\eps},\Omega \right),
 \end{align}
and denote by $\asC_k^s(F,\Omega)$ the corresponding average limit.

 For our purposes, in particular \emph{inner fractal curvatures} of a bounded open set $U\subset\R^d$ are relevant, by which we mean $\sC_k^s(\bd U, U)$, that is, the fractal curvatures of $\bd U$ relative to $U$.
 For self-similar tilings $\sT$, it is convenient to write
\begin{align} \label{def:fc-T}
\sC_k^D(\sT)&:=\sC_k^D(\bd T, T) \quad \text{ and } \quad \asC_k^D(\sT):= \asC_k^D(\bd T,T),
\end{align} 
 where $T$ denotes as before the union of the tiles of $\sT$.

 \medskip
   Observe that \emph{inner fractal curvatures} $\sC_k^s(\bd U, U)$
   are equivalently given in terms of inner parallel sets $U_{-\eps}$, cf.~\eqref{eq:inner-par-set}, of $U$, which allows some more convenient notation. We say $\eps>0$ is \emph{(inner) regular} for an open set $U$, if $\eps$ is regular for $U^c$. Then, for each regular value $\eps>0$ of $U$, we define the curvature measures of $U_{-\eps}$ in the natural way by
 $$
C_k(U_{-\eps},\cdot):=C_k((U^c)_\eps,\cdot), \qquad k=0,\ldots,d-1.
$$
Note that $\bd U_{-\eps}\cap U=\bd (U^c)_\eps$. The definition includes the case $\eps\geq\rho(U)$, where $\rho(U)$ denotes the inradius of $U$, for which $\bd (U_{-\eps})\cap U=\bd (U^c)_\eps=\emptyset$ and therefore $C_k(U_{-\eps},\cdot)=0$. Thus there is a natural range for $\eps$ for a bounded open set $U$, namely the interval $(0,\rho(U))$. If we now assume that almost all $\eps\in(0,\rho(U))$ are (inner) regular for $U$, then $\sC_k^s(\bd U, U)$ of $U$ is equivalently given by the limit $\esslim{}_{\eps \searrow 0}\eps^{s-k} C_k(U_{-\eps})$, for $k=0,\ldots,d-1$.
Here $C_k(U_{-\eps}):=C_k(U_{-\eps},\R^d)$ denotes the total mass of $C_k(U_{-\eps},\cdot)$. Similarly,  $C_k^{\var}(U_{-\eps})$ is the mass of the total variation measure $C_k^\var(U_{-\eps},\cdot)$ of  $C_k(U_{-\eps},\cdot)$.

We are now ready to formulate the first main result on the existence of (average) fractal curvatures for self-similar tilings in $\R^d$. Recall that a self-similar tiling $\sT=\sT(O)$ generated on a feasible set $O$ is only defined, if the underlying IFS has similarity dimension $D<d$ (non-triviality). Recall that $g=\rho(G)$ denotes the inradius of the generator $G$ of $\sT$.

\begin{theorem} \label{thm:main-curv}
Let $\sT=\sT(O)$ be a self-similar tiling generated on a feasible open set $O$ and let $k\in\{0,\ldots,d-1\}$. Assume that the generator $G$ of $\sT$ satisfies the following conditions:
\begin{enumerate}[(i)]
\item If $k\le d-2$, almost all $\eps\in(0,g)$ are (inner) regular values of $G$.
\item There are constants $c,\gamma>0$ such that, for almost all $0<\eps< g$,
\begin{equation}\label{eqn:C_k-G}
C_k^{\var}(G_{-\eps})\le c \eps^{k-D+\gamma}.
\end{equation}
\end{enumerate}

Then $\eps^{D-k}C_k^{\var}(T_{-\eps})$ is bounded for $\eps\in(0,g)$. Moreover, the average $k$-th fractal curvature $\asC_k^D(\sT)$  of $\sT$ exists and is given by the formula
\begin{align}\label{eqn:Yk}
\asC_k^D(\sT)&=\frac{1}{\eta} \int_0^g \eps^{D-k-1} C_k(G_{-\eps})\ d\eps,
\end{align}
where as before $\eta= \sum_{i=1}^N r_i^D |\ln r_i|$.

If $\sT$ is nonlattice, then the $k$-th fractal curvature $C_k^D(\sT)$ of $\sT$ exists and is given by formula \eqref{eqn:Yk}.
\end{theorem}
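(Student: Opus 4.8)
The plan is to run the argument from the proof of Theorem~\ref{thm:main}, but with the volume function $V(T,\eps)=\lambda_d(T_{-\eps})$ replaced by the (signed) total curvature $f(\eps):=C_k(T_{-\eps})$, and to feed the resulting renewal equation into the Renewal Theorem (\cite[Theorem~4.1.4]{winter08}, in the version --- as used in \cite{WZ} --- admitting an inhomogeneous term that is merely measurable, a.e.\ continuous and polynomially dominated). Concretely, setting $\psi(\eps):=C_k(T_{-\eps})-\sum_{i=1}^N\ind{(0,r_ig]}(\eps)\,C_k((S_iT)_{-\eps})$, the identity
\begin{align*}
  \psi(\eps)=f(\eps)-\sum_{i=1}^N r_i^k\,\ind{(0,r_ig]}(\eps)\,f(\eps/r_i)
\end{align*}
is the renewal equation we shall solve. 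As in the proof of Theorem~\ref{thm:main} one may assume $g=1$, the general case following from Remark~\ref{rem:renewal}; and for $k=d-1$ condition~(i) is vacuous because $C_{d-1}$ of the relevant sets is defined without any regularity requirement.

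Two structural inputs are needed. \emph{Scaling}: since $S_i$ is a similarity with ratio $r_i$ one has $(S_iT)_{-\eps}=S_i(T_{-\eps/r_i})$, so by the homogeneity and motion covariance of Lipschitz--Killing curvature measures (\cite{RaZa1}) both $C_k((S_iT)_{-\eps})=r_i^kC_k(T_{-\eps/r_i})$ and $C_k^{\var}((S_iT)_{-\eps})=r_i^kC_k^{\var}(T_{-\eps/r_i})$. \emph{Additivity along tiles}: because the tiles $S_\sigma G$ are pairwise disjoint open sets (cf.~\cite{GeometryOfSST}), every boundary $\bd(S_\sigma G)$ is contained in $T^c$; hence $d(\cdot,T^c)$ agrees with $d(\cdot,(S_\sigma G)^c)$ on the tile $S_\sigma G$, so $\widetilde{(T^c)_\eps}$ splits as the disjoint union $\widetilde{(G^c)_\eps}\cup\bigcup_i\widetilde{((S_iT)^c)_\eps}$ of sets lying at pairwise positive distance. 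Since the critical values of $T$ in $(0,g)$ form the countable union $\bigcup_\sigma r_\sigma\cdot\{\text{critical values of }G\}$, which is Lebesgue-null by condition~(i), for a.e.\ $\eps\in(0,g)$ each of these finitely many pieces has positive reach, and curvature measures add up with disjoint supports:
\begin{align*}
  C_k(T_{-\eps})=C_k(G_{-\eps})+\sum_{i=1}^N C_k((S_iT)_{-\eps}),
\end{align*}
and likewise with $C_k$ replaced by $C_k^{\var}$. Moreover $\rho(T)=\rho(G)=g$ (each tile $S_\sigma G$ has inradius $r_\sigma g\le g$), so $\widetilde{(T^c)_\eps}=\emptyset$ and $C_k(T_{-\eps})=C_k^{\var}(T_{-\eps})=0$ for every $\eps>g$. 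Inserting the additivity identity into the definition of $\psi$ and using that $C_k((S_iT)_{-\eps})=r_i^kC_k(T_{-\eps/r_i})$ vanishes as soon as $\eps>r_ig$ (then $\eps/r_i>g$), the inhomogeneous term collapses to
\begin{align*}
  \psi(\eps)=C_k(G_{-\eps})\qquad\text{for a.e.\ }\eps>0,
\end{align*}
and in particular $\psi$ vanishes for $\eps>g$.

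It then remains to verify the hypotheses of \cite[Theorem~4.1.4]{winter08} for $f$ and $\psi$, with the exponent $k$ playing the role that $d$ plays in the proof of Theorem~\ref{thm:main} (recall $\sum_i r_i^D=1$). The function $\psi=C_k(G_{-\cdot})$ is measurable and, by condition~(i) together with the continuity of the curvature measures of $\widetilde{(G^c)_\eps}$ across regular values, continuous off a null set; and by \eqref{eqn:C_k-G}, $|\psi(\eps)|\le C_k^{\var}(G_{-\eps})\le c\,\eps^{k-D+\gamma}$ for a.e.\ $\eps\in(0,g)$. The Renewal Theorem then gives the existence of $\asC_k^D(\sT)=\asC_k^D(\bd T,T)$ with
\begin{align*}
  \asC_k^D(\sT)=\frac1\eta\int_0^g\eps^{D-k-1}\psi(\eps)\,d\eps=\frac1\eta\int_0^g\eps^{D-k-1}C_k(G_{-\eps})\,d\eps,
\end{align*}
which is \eqref{eqn:Yk}, and in the nonlattice case it gives the essential limit $\sC_k^D(\sT)$ with the same value. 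Finally, the claimed boundedness of $\eps^{D-k}C_k^{\var}(T_{-\eps})$ on $(0,g)$ is obtained by feeding the total-variation renewal identity --- of the same shape, with non-negative solution $z(\eps)=C_k^{\var}(T_{-\eps})$, inhomogeneous term $C_k^{\var}(G_{-\eps})$ (vanishing for $\eps\ge g$ and bounded by $c\,\eps^{k-D+\gamma}$ by \eqref{eqn:C_k-G}), and multiplicities $r_i^k$ --- into the renewal inequality of \cite{winter08} (see also \cite{WZ}).

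The two steps going genuinely beyond the Minkowski-content argument, which I expect to be the main obstacle, are the following. First, the additivity of curvature measures along the tiles: it relies on the eroded tiles being \emph{positive-reach} sets at \emph{pairwise positive distance}, which is precisely where the geometry of self-similar tilings from \cite{GeometryOfSST} enters, and one must also ensure that for a.e.\ $\eps$ all relevant tiles are simultaneously regular. Second, applying the Renewal Theorem to $f(\eps)=C_k(T_{-\eps})$, which --- unlike the continuous $V(T,\eps)$ --- is signed and only a.e.\ continuous; this forces the use of the refined renewal machinery of \cite{WZ,winter08} and makes the curvature-bound hypothesis \eqref{eqn:C_k-G} indispensable, both to secure direct Riemann integrability of the inhomogeneous term and to control the total variation through a renewal \emph{inequality} rather than an exact renewal equation.
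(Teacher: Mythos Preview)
Your proposal is correct and follows essentially the same route as the paper: set up the renewal equation for $f(\eps)=C_k(T_{-\eps})$ via scaling plus the locality/additivity of curvature measures along the disjoint open tiles, identify the inhomogeneous term with $C_k(G_{-\eps})$, and invoke the a.e.-continuous version of the Renewal Theorem (the paper cites it as \cite[Theorem~4.10]{RW09}, with the weak continuity at regular values supplied by what is Proposition~\ref{prop:contin} here and the null set of critical values by Lemma~\ref{lem:sN}). Your derivation of the boundedness of $\eps^{D-k}C_k^{\var}(T_{-\eps})$ via the total-variation renewal identity is slightly more explicit than the paper, which simply records it as part of the conclusion of the cited renewal theorem, but the substance is the same.
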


Note that the hypothesis is formulated completely in terms of the generator $G$ of the tiling and that also the formula provided for the (average) fractal curvatures is expressed in terms of the curvatures of (the parallel sets of) $G$. The formula \eqref{eqn:Yk} for the fractal curvatures of $\sT$ is in a sense even simpler than the one for the Minkowski content in \eqref{eqn:mink-gen} as the integration is over the finite interval $(0,g)$ only.

In Theorem~\ref{thm:main-curv}, we have tried to formulate minimal assumptions needed to apply the Renewal Theorem. The regularity assumption (i) on $G$ is needed to ensure that the total curvatures $C_k(G_{-\eps})$ (and thus $C_k(T_{-\eps})$) are well defined for sufficiently many $\eps>0$. This assumption is always satisfied if sets in dimension $d\leq 3$ are considered, cf.~\cite{Fu85}. It cannot be omitted in higher dimensions. In view of Example~\ref{ex:Koch}, it is clear that there exist counterexamples for which this assumption fails. (This is in contrast to the situation of fractal curvatures for self-similar sets, where no counterexamples are known and where the regularity condition is conjectured to be always satisfied, see \cite[p.1]{zaehle11}.)

The assumption \eqref{eqn:C_k-G} should be compared to the condition $\dim_M(\bd G, G)<D$ in Theorem~\ref{thm:main}, which is equivalently given by \eqref{eqn:VG}. In terms of scaling exponents (as defined e.g.\ in \cite{PokWi}), this condition may be reformulated as follows: the $k$-th scaling exponent $s_k(\bd G, G)$ of $\bd G$ \emph{relative to $G$} is strictly smaller than the similarity dimension $D$.  
To this condition, similar remarks apply as to condition $\dim_M(\bd G, G)<D$ in Theorem~\ref{thm:main}. In particular, the condition is close to optimal and cannot be omitted. $s_k(\bd G, G)>D$ would imply $s_k(\bd T, T)>D$ such that the ($D$-dimensional) fractal curvatures of $\sT$  would not exist. Similarly as in Example~\ref{ex:Koch}, it is easy to construct tilings the generators of which do not satisfy (ii). 

We will now prove Theorem~\ref{thm:main-curv}. Later we will demonstrate that the assumptions of Theorem~\ref{thm:main-curv} are satisfied under compatibility and the usual regularity and curvature bound assumptions used for analogous results for self-similar sets.
For the proof of Theorem~\ref{thm:main-curv}, we need the following convergence result for curvature measures which is a consequence of \cite[Theorem 5.2]{RatSchmSp}.
\begin{prop} \label{prop:contin}
   Let $K\subset\R^d$ be a set such that $\bd K$ is compact. Let $\eps>0$ be a regular value of $K$ and let $(\eps_n)$ be a sequence of positive numbers such that $\eps_n\to\eps$ as $n\to \infty$. Then there is $n_0\in\N$ such that $\eps_n$ is regular for $K$ for each $n\geq n_0$ and, for $k=0,\ldots, d-1$, the curvature measures $C_k(K_{\eps_n},\cdot)$ converge weakly to $C_k(K_\eps,\cdot)$ as $n\to\infty$.
\end{prop}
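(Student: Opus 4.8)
The plan is to deduce Proposition~\ref{prop:contin} from the cited result \cite[Theorem~5.2]{RatSchmSp}, whose hypotheses concern sequences of sets converging in a suitable (e.g.\ Hausdorff) sense together with a condition ensuring no curvature mass escapes in the limit. First I would observe that since $\eps>0$ is a regular value of $K$, the complement $\widetilde{K_\eps}$ has positive reach and $\bd K_\eps$ is a Lipschitz manifold of bounded curvature, so the curvature measures $C_k(K_\eps,\cdot)$ are defined; the goal is twofold: (a) that for $n$ large $\eps_n$ is again regular, and (b) that $C_k(K_{\eps_n},\cdot)\to C_k(K_\eps,\cdot)$ weakly. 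For (a), I would use the standard fact (going back to Fu and used throughout \cite{RaZa1,RatSchmSp}) that the set of critical values of a compact set is closed, or more precisely that regularity is stable under small perturbations of the radius: if $\eps$ were a limit of critical values $\eps_n$, then by compactness of $\bd K$ (hence of the relevant parallel bodies) one could extract convergent critical points $x_n\to x$ with $d(x_n,K)=\eps_n$, and the convex-combination (criticality) condition $x_n\in\conv\Sigma_K(x_n)$ would pass to the limit by upper semicontinuity of the set-valued map $x\mapsto\Sigma_K(x)$, forcing $\eps$ to be critical, a contradiction. This yields $n_0$ with $\eps_n$ regular for all $n\ge n_0$.

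Next I would set up the convergence. Since $\eps_n\to\eps$ and $\bd K$ is compact, the parallel sets $K_{\eps_n}$ converge to $K_\eps$ in the Hausdorff metric, and likewise the complements $\widetilde{K_{\eps_n}}$ converge to $\widetilde{K_\eps}$; moreover, because $\eps$ is a regular value, the reaches of $\widetilde{K_{\eps_n}}$ are bounded below by a positive constant for $n$ large (again a perturbation/semicontinuity statement for reach near a regular radius). This is exactly the kind of input \cite[Theorem~5.2]{RatSchmSp} requires: Hausdorff convergence of the sets plus a uniform lower bound on reach (or the corresponding Lipschitz-manifold-of-bounded-curvature condition with uniformly controlled constants) guarantees weak convergence of all the associated Lipschitz--Killing curvature measures $C_k(\widetilde{K_{\eps_n}},\cdot)\to C_k(\widetilde{K_\eps},\cdot)$. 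Applying the normal-reflection identity $C_k(K_\eps,\cdot)=(-1)^{d-1-k}C_k(\widetilde{K_\eps},\cdot)$ term by term then transfers this to $C_k(K_{\eps_n},\cdot)\to C_k(K_\eps,\cdot)$, which is the assertion. For the top-order term $k=d-1$, one can alternatively argue directly: $C_{d-1}(K_\eps,\cdot)=\tfrac12\Ha^{d-1}(\bd K_\eps\cap\cdot)$, and the weak convergence of surface measures of parallel sets at a regular radius is classical.

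I expect the main obstacle to be purely bookkeeping: verifying precisely that the hypotheses of \cite[Theorem~5.2]{RatSchmSp} are met, i.e.\ translating ``$\eps$ is a regular value'' into the exact convergence-of-sets-plus-uniform-reach-bound (or uniform bounded-curvature) statement that the cited theorem takes as input, and checking that this holds \emph{uniformly} over the tail of the sequence rather than just pointwise. The semicontinuity arguments for step (a) and for the lower reach bound are standard in this literature but do require that $\bd K$ be compact (which is assumed) so that one can pass to convergent subsequences of footpoints and normal directions; I would make sure to invoke compactness explicitly at each such step. Once the hypotheses are in place, the conclusion is immediate from the cited theorem, so the proof should be short.
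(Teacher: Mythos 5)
Your overall strategy (reduce to the set-convergence theorem of Rataj--Schmidt--Spodarev) is in the right spirit, but the way you propose to feed the data into that theorem has a real gap, and it is precisely the gap that the paper's proof is designed to close. The cited result \cite[Theorem 5.2]{RatSchmSp} is not of the form ``Hausdorff convergence of sets with positive reach plus a uniform lower bound on reach implies weak convergence of curvature measures''; it concerns a sequence of \emph{compact sets} $A^n\to A$ in the Hausdorff metric and a \emph{fixed} radius $r$ that is regular for $A$, and concludes that $C_k(A^n_r,\cdot)\to C_k(A_r,\cdot)$ weakly. To use it one must convert the varying-radius problem into a varying-set problem. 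The paper does this by choosing a fixed $0<r<\inf_n\eps_n$ and writing $K_{\eps_n}=(A^n)_r$ with $A^n:=K_{\eps_n-r}$, so that $A^n\to A:=K_{\eps-r}$ in the Hausdorff metric while the radius $r$ stays fixed; the only point left to check is that $r$ is a regular value of $A$, which is exactly the content of Lemma~\ref{lem:reg} (criticality of a point for $K$ and for $K_{\eps-r}$ coincide outside $K_{\eps-r}$, via a homothety of the footpoint sets). This reduction also delivers the regularity of $\eps_n$ for large $n$ for free, so your separate semicontinuity argument for part (a) --- which is fine as far as it goes --- becomes unnecessary.

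If instead you apply a continuity theorem directly to the sets $\widetilde{K_{\eps_n}}$ (Federer's continuity theorem for sets of positive reach), you need a uniform positive lower bound on the reach of $\widetilde{K_{\eps_n}}$ along the tail of the sequence. This does \emph{not} follow from ``perturbation/semicontinuity'': reach is only \emph{upper} semicontinuous with respect to Hausdorff convergence, which is the wrong direction, so positivity of the reach of $\widetilde{K_\eps}$ alone gives no control over the reaches along the sequence. One would have to extract a quantitative lower bound from Fu's tube theorem on an interval of regular values, which is substantially more than the bookkeeping you anticipate. Finally, note that the statement only assumes $\bd K$ compact, not $K$ compact; the paper handles this by intersecting $K$ with a large ball containing $\bd K_t$ for all relevant $t$ and invoking local determination of curvature measures, a step your argument would also need.
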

\begin{proof}
  Assume first that $K$ is compact.
  Let $\tilde\eps:=\inf\{\eps_n:n\in\N\}$. Observe that $0<\tilde\eps\leq\eps$. Let $r$ be some number such that $0<r<\tilde\eps$. Let $A:=K_{\eps-r}$ and $A^n:=K_{\eps_n-r}$ for each $n\in\N$. Then, $(A^n)$ is a sequence of compact sets converging to $A$ in the Hausdorff metric as $n\to \infty$. (Similary, the parallel sets $A^n_r=K_{\eps_n}$ converge to $A_r=K_\eps$ as $n\to\infty$.) Therefore, the claim follows from \cite[Theorem 5.2]{RatSchmSp}, provided that $r$ is a regular value of $A=K_{\eps-r}$.
  But the regularity of $r$ for $A$ is clear from the assumed regularity of $\eps$ for $K$, see Lemma~\ref{lem:reg} below.

  If $K$ is not compact, we intersect $K$ with a sufficiently large (closed) ball $B$ such that $\bd K$ (and thus $\bd K_t$ for each $t>0$) is contained in the interior of $B$, apply the first part of the proof to the compact set $K\cap B$ and use that curvature measures are locally determined.
\end{proof}

\begin{lemma} \label{lem:reg}
   Let $A\subset\R^d$ be a compact set and $r>0$.
   \begin{enumerate}
     \item[(i)] Let $x\in\R^d\setminus A_r$. Then $x$ is critical for $A$ if and only if $x$ is critical for $A_r$.
     \item[(ii)] Let $t>0$. Then $r+t$ is critical for $A$ if and only if $t$ is critical for $A_r$.
   \end{enumerate}
\end{lemma}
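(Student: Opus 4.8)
The plan is to reduce both parts to a single geometric fact: when $x$ lies outside $A_r$, the set of nearest points of $A_r$ to $x$ is the image of the set of nearest points of $A$ to $x$ under a \emph{homothety centered at $x$}, and homotheties commute with the operation $S\mapsto\conv S$ while fixing their center. Once this is in place, both (i) and (ii) fall out almost immediately.

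First I would record the distance identity. Fix $x\in\R^d\setminus A_r$ and set $\rho:=d(x,A)$; since $x\notin A_r$ we have $\rho>r$. A short triangle-inequality argument, using that $A$ (hence $A_r$) is compact so nearest points are attained, gives $d(x,A_r)=\rho-r$: for any $z\in A_r$ choose $a\in A$ with $|z-a|\le r$, so $|x-z|\ge|x-a|-|z-a|\ge\rho-r$; and for $a_0\in\Sigma_A(x)$ the point $p_0:=a_0+r\frac{x-a_0}{|x-a_0|}$ lies in $A_r$ with $|x-p_0|=\rho-r$. Next I would prove that $\Sigma_{A_r}(x)=\phi(\Sigma_A(x))$, where $\phi$ is the homothety $\phi(y):=x+\frac{\rho-r}{\rho}(y-x)$ with center $x$ and ratio $(\rho-r)/\rho\in(0,1)$. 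The inclusion $\supseteq$ is the computation above applied to each $a\in\Sigma_A(x)$, since $\phi(a)=a+\frac r\rho(x-a)$ satisfies $|\phi(a)-a|=r$ and $|x-\phi(a)|=\rho-r=d(x,A_r)$. For $\subseteq$, given $p\in\Sigma_{A_r}(x)$ pick $a\in A$ with $|p-a|=d(p,A)\le r$; then $\rho\le|x-a|\le|x-p|+|p-a|=(\rho-r)+|p-a|$ forces $|p-a|=r$ and $|x-a|=\rho$ with $p$ on the segment $[x,a]$, i.e.\ $a\in\Sigma_A(x)$ and $p=\phi(a)$.

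Part (i) is then immediate: $\phi$ is an affine bijection of $\R^d$ with $\phi(x)=x$, hence $\phi(\conv S)=\conv\phi(S)$ for every $S$, and therefore
\[
x\in\conv\Sigma_A(x)\iff\phi(x)\in\conv\phi(\Sigma_A(x))=\conv\Sigma_{A_r}(x)\iff x\in\conv\Sigma_{A_r}(x),
\]
which says precisely that $x$ is critical for $A$ if and only if it is critical for $A_r$. Part (ii) follows by combining (i) with the distance identity: if $t>0$ and $d(x,A_r)=t$ then $x\notin A_r$ and $d(x,A)=r+t$, and conversely $d(x,A)=r+t$ implies $d(x,A_r)=t$; thus $r+t$ is a critical value of $A$ (witnessed by a critical point $x$ with $d(x,A)=r+t$) exactly when $t$ is a critical value of $A_r$ (witnessed by the \emph{same} $x$, now with $d(x,A_r)=t$).

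There is no genuinely hard step here; the only points requiring care are the explicit description of $\Sigma_{A_r}(x)$ — specifically the inclusion $\subseteq$, where from a nearest point $p\in A_r$ one must recover a nearest point $a\in A$ and verify that $x$, $p$, $a$ are collinear with $|p-a|=r$ — and the remark that the map carrying $\Sigma_A(x)$ onto $\Sigma_{A_r}(x)$ is a homothety \emph{centered at $x$}. The latter is what makes $\phi(x)=x$, and it is this fixed-point property, rather than mere affineness, that makes the convex-hull equivalence work and thereby preserves criticality.
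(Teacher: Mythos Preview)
Your proof is correct and takes essentially the same approach as the paper: both establish that $\Sigma_{A_r}(x)$ and $\Sigma_A(x)$ are related by a homothety centered at $x$ (the paper uses the inverse map with ratio $\tfrac{t+r}{t}$, you use ratio $\tfrac{\rho-r}{\rho}$), and both deduce (i) from the fact that such a homothety preserves convex hulls and fixes $x$, with (ii) following from (i) together with the distance identity $d(x,A_r)=d(x,A)-r$. Your handling of the inclusion $\Sigma_{A_r}(x)\subseteq\phi(\Sigma_A(x))$ via equality in the triangle inequality is slightly more direct than the paper's contradiction argument, but the substance is the same.
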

\begin{proof}
  (ii) follows directly from (i). For a proof of (i), we can assume without loss of generality that $x=0$. Let $t:=d(0,A_r)$ and define the homothety $h:\R^d\to\R^d$ by $z\mapsto\frac{t+r}t z$. It is easy to see that $d(0,A)=r+t$. Our first claim is that $y\in\Sigma_{A_r}(0)$ if and only if $h(y)\in\Sigma_{A}(0)$. Indeed, $y\in\Sigma_{A_r}(0)$ implies in particular $|y-0|=t$ and $y\in \bd (A_r)$. Because of the latter, there must exists a point $y'\in A$ such that $|y-y'|=r$. We necessarily have $y'=h(y)$, i.e., $y'$ is on the ray from $0$ through $y$ and $|y'|=r+t$. (Assume $y'\neq h(y)$. Let $z$ be the point on $[0,y']$ s.t. $|z-y'|=r$.
  Since $|y'-z|+|z-0|=|y'-0|<|y'-y|+|y-0|=r+t$, we get $|z-0|<t$. But this is a contradiction to the definition of $t$, since clearly $z\in A_r$.) Therefore, we have $h(y)\in A$ and $|h(y)|=r+t$, which means $h(y)\in\Sigma_{A}(0)$. This proves $h(\Sigma_{A_r}(0))\subset \Sigma_{A}(0)$. The argument for the reverse inclusion is even simpler. If $y\in\Sigma_{A}(0)$, then $y':=h^{-1}(y)$ satisfies obviously $|y-y'|=r$, meaning that $y'\in A_r$, and $|y'|=t$. This implies $y'\in\Sigma_{A_r}(0)$.

  Now we have $0\in \conv \Sigma_{A_r}(0)$ if and only if $0$ can be written as a convex combination $\sum_{i=1}^m \lambda_i y_i$ of points $y_i\in\Sigma_{A_r}(0)$. But then $0=h(0)=\sum_{i=1}^m \lambda_i h(y_i)$ is a convex combination of points in $\Sigma_{A}(0)$, that is $0\in\conv \Sigma_{A}(0)$, which completes the proof of (i).
\end{proof}

Let $S:\R^d\to\R^d$ be a similarity with ratio $r>0$, $A\subset\R^d$ closed and $x\in\R^d\setminus A$. Then $x$ is critical for $A$ if and only if the point $S(x)$ is critical for $S(A)$. Therefore, $\eps>0$ is a critical value of $A$ if and only if $r\eps$ is a critical value of $S(A)$. This has the following immediate implications for the relation between the (inner) critical values of the generator $G$ and the union set $T$ of a self-similar tiling $\sT$. (By \emph{inner critical values} of an open set $U$ we mean the critical values of $U^c$.)

\begin{lemma} \label{lem:sN}
  Let $\sT=\sT(O)$ be a self-similar tiling with generator $G$ and union set $T$. 
  Denote by $\sN_G$ the set of (inner) critical values of $G$.
  Then the set
  $$
  \sN:=\bigcup_{\sigma\in\Sigma_N^*} r_\sigma\sN_G
  $$
  is the set of critical values of $T$. It satisfies the relation $r_i\sN\subset\sN$ for all $i=1,\ldots,N$. If $\sN_G$ is a Lebesgue null set, then so is $\sN$.
\end{lemma}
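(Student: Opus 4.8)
The plan is to reduce the statement to the scaling behaviour of critical values recorded just above the lemma, once one knows that near any point exterior to $T$ the nearest-point structure of the closed set $T^c$ localises to the single tile containing that point. (As throughout this part of the section, ``critical values of $T$'' means the \emph{inner} critical values, i.e.\ the critical values of $T^c$.) First I would establish the key topological fact that $\bd(S_\sigma G)\subseteq T^c$ for every $\sigma\in\Sigma_N^*$: if $y\in T=\bigcup_{\tau\in\Sigma_N^*}S_\tau G$, then $y$ has a neighbourhood contained in some open tile $S_\tau G$, which, by pairwise disjointness of the tiles, misses $S_\sigma G$ when $\tau\neq\sigma$ and exhibits $y$ as an interior point of $S_\sigma G$ when $\tau=\sigma$; in either case $y\notin\bd(S_\sigma G)$.

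Next, for a fixed $x\in T$ I would take $S_\sigma G$ to be the unique tile containing $x$. From $T^c\subseteq(S_\sigma G)^c$ one gets $d(x,T^c)\geq d(x,(S_\sigma G)^c)$, while every nearest point of $x$ in the closed set $(S_\sigma G)^c$ lies on $\bd(S_\sigma G)\subseteq T^c$; hence $d(x,T^c)=d(x,(S_\sigma G)^c)$, and together with $T^c\subseteq(S_\sigma G)^c$ and $\bd(S_\sigma G)\subseteq T^c$ this forces $\Sigma_{T^c}(x)=\Sigma_{(S_\sigma G)^c}(x)$. Consequently $x$ is critical for $T^c$ if and only if it is critical for $(S_\sigma G)^c=S_\sigma(G^c)$, with equal associated distances.

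Now I would invoke the scaling fact preceding the lemma for the similarity $S_\sigma$ (applied to the closed set $A=G^c$, so that $S(A)=(S_\sigma G)^c$): criticality of $x$ for $S_\sigma(G^c)$ at distance $\eps$ is equivalent to criticality of $S_\sigma^{-1}(x)\in G$ for $G^c$ at distance $\eps/r_\sigma$. Letting $x$ range over all of $T$ in one direction and $(\sigma,x_0)$ over $\Sigma_N^*\times G$ in the other, this shows that $\eps>0$ is an inner critical value of $T$ precisely when $\eps=r_\sigma\eps_0$ for some $\sigma\in\Sigma_N^*$ and some inner critical value $\eps_0$ of $G$, i.e.\ precisely when $\eps\in\sN$. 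Hence $\sN$ is the set of inner critical values of $T$.

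The two remaining assertions should then be purely formal. For $r_i\sN\subseteq\sN$ one notes $r_i\sN=\bigcup_{\sigma\in\Sigma_N^*}r_{i\sigma}\sN_G$ and that $\sigma\mapsto i\sigma$ maps $\Sigma_N^*$ into itself, so the right-hand union is a subfamily of the one defining $\sN$. For the last claim, each $r_\sigma\sN_G$ is the image of $\sN_G$ under the linear (hence bi-Lipschitz) self-map $t\mapsto r_\sigma t$ of $\R$, so it is Lebesgue-null whenever $\sN_G$ is, and $\sN$ is then a countable union of null sets since $\Sigma_N^*$ is countable. The only genuine obstacle I anticipate is the localisation step --- making sure the set of nearest points of an exterior point $x\in S_\sigma G$ to the \emph{whole} complement $T^c$ coincides with its set of nearest points to the single tile's complement $(S_\sigma G)^c$, which ultimately rests on the fact that no boundary point of a tile can lie in another tile; once that is in hand, the remainder is bookkeeping with the scaling property already recorded in the text.
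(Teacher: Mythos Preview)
Your argument is correct and is precisely the ``very simple'' derivation the paper alludes to but omits: the scaling property stated immediately before the lemma, together with the localisation $\Sigma_{T^c}(x)=\Sigma_{(S_\sigma G)^c}(x)$ for $x\in S_\sigma G$ (which you justify via $\bd(S_\sigma G)\subset T^c$), gives the identification of the critical values of $T$, and the remaining two assertions are indeed formal. There is nothing to add; your localisation step is the one nontrivial ingredient, and it is carried out correctly.
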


We omit a proof, since it is very simple. In the proof of Theorem~\ref{thm:main-curv} below we will use in particular that if $\eps\notin\sN$ then $\eps/r_i\notin\sN$ for each $i=1,\ldots,N$. Observe that $\sN\subset(0,g)$ and that the curvature measures $C_k(G_{-\eps},\cdot)$ and $C_k(T_{-\eps},\cdot)$ are well defined for each $\eps\notin\sN$.

 \begin{proof}[Proof of Theorem~\ref{thm:main-curv}]
   We use \cite[Theorem~4.10]{RW09}, a modification of the Renewal Theorem 4.1.4 in \cite{winter08}, where the continuity assumption on $\varphi_k$ is weakened to continuity Lebesgue almost everywhere. Remark~\ref{rem:renewal} applies to this slightly more general statement equally as before.
   Let the functions $f$ and $\varphi_k$ be defined by $f(\eps):=C_k(T_{-\eps})$ and $\varphi_k(\eps):=C_k(G_{-\eps})$ for $\eps\notin\sN$, and by $f(\eps)=\varphi_k(\eps):=0$ for $\eps\in\sN$ . Note that both functions are zero for $\eps\geq g$. (Therefore, we can omit the indicator functions $\ind{(0,r_ig]}$ in the formulas below.)
   Moreover, they satisfy the renewal equation
   \begin{equation} \label{eqn:curv-renewal}
\varphi_k(\eps)=f(\eps)-\sum_{i=1}^N r_i^{k} f(\eps/r_i),
\end{equation}
for all $\eps\notin\sN$ (that is, by (i) and Lemma~\ref{lem:sN}, for a.a. $\eps>0$). For $\eps\geq g$, this is obvious, since in this case both sides of the equation vanish. For $\eps\in(0,g)\setminus\sN$, this is seen from the relation
\begin{align*}
  C_k(T_{-\eps})=C_k(T_{-\eps},G) + \sum_{i=1}^N C_k(T_{-\eps}, S_iT)=C_k(G_{-\eps}) + \sum_{i=1}^N C_k((S_i T)_{-\eps}),
\end{align*}
which follows from the disjointness of the sets $S_iT$ and $G$, and the fact that curvature measures are locally determined (cf.\ e.g.~\cite[(1.5)]{WZ}). The observation that
\begin{align} \label{eqn:C-S_iT}
  C_k((S_i T)_{-\eps})=C_k(S_i (T_{-\eps/r_i}))=r_i^k C_k(T_{-\eps/r_i})=r_i^k  f(\eps/r_i),
\end{align}
 completes the proof of \eqref{eqn:curv-renewal}.

 The assumptions (i) and (ii) on the set $G$ imply that the hypothesis of \cite[Thm.~4.10]{RW09} is satisfied and the assertions of Theorem~\ref{thm:main-curv} follow directly from this theorem. In particular, Proposition~\ref{prop:contin} implies that $\varphi_k$ is continuous at each (inner) regular value $\eps\in(0,g)$ of $G$ and by (i) almost all $\eps$ are regular.
 Note that the formula \eqref{eqn:Yk} follows directly by plugging $\varphi_k$ into the general expression given in \cite[Thm.~4.10]{RW09}. No extra argument is needed here to derive the formula, in contrast to the derivation for the Minkowski content in Corollary~\ref{cor:main}. (Note that is enough to have the renewal equation satisfied for almost all $\eps$. One could easily redefine $\varphi_k$ on the null set $\sN$ such that \eqref{eqn:curv-renewal} holds for all $\eps$, and this would neither  affect the continuity of $\varphi_k$ almost everywhere nor the integral expression in the conclusion.)
 \end{proof}

\paragraph{\bf Relations between the fractal curvatures of self-similar sets and compatible tilings.}
Now we assume that the tiling $\sT$ satisfies the compatibility condition, that is, we assume $\bd G\subset F$. Recall that this condition is necessary and sufficient for the decomposition \eqref{eqn:compat2} to hold.
The compatibility allows to relate the fractal curvatures of self-similar sets and associated tilings and to derive in this way generator formulas for the fractal curvatures of self-similar sets. Similar formulas have been obtained in \cite[Thm.~2.37]{Kom} under slightly stronger assumptions.  We will show that the assumptions on $G$ in Theorem~\ref{thm:main-curv} are implied by the usual regularity and curvature bound conditions on $F$ used e.g.\ in~\cite[Thm~2.1]{WZ}, see conditions (RC) and (CBC) below. Thus the generator formulas hold almost in the same generality as the previously known \emph{overlap} formulas (as e.g.\ in \cite[(3.6)]{WZ}) and do not need any extra assumptions apart from the existence of a strong feasible set $O$ which generates a compatible tiling.

\begin{theorem} \label{thm:comp}
Let $F$ be a self-similar set satisfying OSC and let $O$ be a strong feasible open set for $F$ such that the associated self-similar tiling $\sT=\sT(O)$ (with generator $G$) is compatible.
Let $k\in\{0,\ldots,d-1\}$.

If $k\le d-2$, assume additionally that $F$ satisfies the following conditions:
\begin{enumerate}[(ii)]
\item[(RC)] Almost all $\eps>0$ are regular values for $F$.
\item[(CBC)] There are constants $c,\gamma>0$ and $R> \sqrt{2}\,\diam F$ such that, for almost all $0<\eps\le R$,
\begin{equation}\label{eqn:C_k-F}
C_k^{\var}(F_{\eps}, ((\setmap O)^c)_{\eps})\le c \eps^{k-D+\gamma}.
\end{equation}
\end{enumerate}

  Then, the average ($D$-dimensional) $k$-th fractal curvatures of $F$ and $\sT$ exist and coincide. Moreover, they are given by the formula 
  \begin{align}\label{eqn:Yk2}
\asC^D_k(F)&=\asC^D_k(\sT)=\frac{1}{\eta} \int_0^g \eps^{D-k-1} C_k(G_{-\eps})\ d\eps.
\end{align}

If $\sT$ is nonlattice, then also the fractal curvatures $\sC^D_k(F)$ and $\sC^D_k(\sT)$ exist, coincide and are given by \eqref{eqn:Yk2}.
\end{theorem}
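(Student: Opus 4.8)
The plan is to run the proof of Theorem~\ref{thm:Minkowski-measurability-fractals} again, but with the total curvature measures $C_k$ in place of the Lebesgue volume and with Theorem~\ref{thm:main-curv} as the input in place of Theorem~\ref{thm:main}. Accordingly the argument splits into two parts. In part (a) I would verify that the generator $G$ of $\sT$ satisfies hypotheses (i) and (ii) of Theorem~\ref{thm:main-curv}, so that $\asC_k^D(\sT)$ exists, is given by \eqref{eqn:Yk}, and $\eps^{D-k}C_k^{\var}(T_{-\eps})$ is bounded. In part (b) I would show that the part of the curvature of $F_\eps$ lying outside the tiling is asymptotically negligible, so that $\asC_k^D(F)=\asC_k^D(\sT)$, and likewise $\sC_k^D(F)=\sC_k^D(\sT)$ in the nonlattice case; combined with part (a) this yields \eqref{eqn:Yk2}.

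For part (a), assume first $k\le d-2$. The two structural facts I would use are that $\bd G\subset F$ (this \emph{is} compatibility) and that $F\cap G=\emptyset$ (immediate from $G=O\setminus\setmap\cl O$ and $F=\setmap F\subset\setmap\cl O$). From these, for every $x\in G$ one obtains $d(x,G^c)=d(x,\bd G)=d(x,F)$ and $\Sigma_{G^c}(x)=\Sigma_F(x)$: indeed $d(x,F)\le d(x,\bd G)$ since $\bd G\subset F$, while a nearest point $y\in F$ of $x$ lies in $G^c$, so the segment $[x,y]$ meets $\bd G$ at distance $\le d(x,F)$, forcing equality and $y\in\bd G$. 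Hence a point of $G$ is critical for $G^c$ iff it is critical for $F$, so the set of inner critical values of $G$ is contained in the set of critical values of $F$, which is Lebesgue null by (RC); this is hypothesis (i). For hypothesis (ii): restricting \eqref{eqn:compat2} to the tile $G$ gives $F_\eps\cap G=G_{-\eps}=(G^c)_\eps\cap G$ for all $\eps>0$, so by local determination of curvature measures $C_k(G_{-\eps},\cdot)=C_k((G^c)_\eps,\cdot)$ agrees with $C_k(F_\eps,\cdot)$ on $G$; since $C_k((G^c)_\eps,\cdot)$ is concentrated on $\bd(G^c)_\eps\subset G$, this gives $C_k^{\var}(G_{-\eps})=C_k^{\var}(F_\eps,G)$. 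Using $G\subset O\setminus\setmap O\subset((\setmap O)^c)_\eps$, condition (CBC) yields $C_k^{\var}(G_{-\eps})\le C_k^{\var}(F_\eps,((\setmap O)^c)_\eps)\le c\eps^{k-D+\gamma}$ for a.a.\ $\eps\le R$, in particular for a.a.\ $\eps\in(0,g)$, which is \eqref{eqn:C_k-G}. For $k=d-1$, where $C_{d-1}=\frac{1}{2}\Ha^{d-1}(\bd\,\cdot)$ is always defined, no regularity is needed and the estimate \eqref{eqn:C_k-G} for $C_{d-1}^{\var}(G_{-\eps})=\frac{1}{2}\Ha^{d-1}(\bd(G^c)_\eps)$ follows from Lemma~\ref{lem:key} (the volume bound for $\lambda_d(F_\eps\cap G)$) by the standard argument relating the surface area of a parallel set to its volume, cf.~\cite{winter08}. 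Thus Theorem~\ref{thm:main-curv} applies and supplies the assertions about $\sT$ together with formula \eqref{eqn:Yk}.

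For part (b), fix $k$ and work with those $\eps>0$ that are regular for $F$ (a.a.\ $\eps$ when $k\le d-2$ by (RC); all $\eps$ when $k=d-1$), so that $C_k(F_\eps,\cdot)$ is defined, and write $C_k(F_\eps)=C_k(F_\eps,T)+C_k(F_\eps,T^c)$. From \eqref{eqn:compat2} one reads off $F_\eps\cap T=T_{-\eps}$ and $F_\eps\cap T^c=K_\eps\setminus K$ for all $\eps>0$. By local determination, on the open set $T$ the measure $C_k(F_\eps,\cdot)$ agrees with $C_k((T^c)_\eps,\cdot)=C_k(T_{-\eps},\cdot)$, which is concentrated on $\bd(T^c)_\eps\subset T$; hence $C_k(F_\eps,T)=C_k(T_{-\eps})$. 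On the other hand $C_k(F_\eps,\cdot)$ is carried by $\{x:d(x,F)=\eps\}\subset F_\eps$, so $C_k(F_\eps,T^c)=C_k(F_\eps,F_\eps\cap T^c)=C_k(F_\eps,K_\eps\setminus K)$, and since $K_\eps\setminus K\subset K^c\subset(\setmap O)^c\subset((\setmap O)^c)_\eps$, condition (CBC) for $k\le d-2$ (and the surface-area version of Lemma~\ref{lem:key} for $k=d-1$) gives $|C_k(F_\eps,T^c)|\le C_k^{\var}(F_\eps,((\setmap O)^c)_\eps)\le c\eps^{k-D+\gamma}$ for a.a.\ sufficiently small $\eps$. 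Therefore $\eps^{D-k}C_k(F_\eps)=\eps^{D-k}C_k(T_{-\eps})+O(\eps^{\gamma})$. Since $\int_0^1\eps^{\gamma-1}\,d\eps<\infty$, the $O(\eps^\gamma)$ term contributes nothing to the logarithmic average and tends to $0$, so it affects neither the essential limit as $\eps\searrow0$ nor the averaged limit; hence $\asC_k^D(F)=\asC_k^D(\bd T,T)=\asC_k^D(\sT)$, which exists by part (a) and equals the right-hand side of \eqref{eqn:Yk2}. In the nonlattice case Theorem~\ref{thm:main-curv} additionally yields the existence of $\sC_k^D(\sT)$, whence $\sC_k^D(F)$ exists, coincides with $\sC_k^D(\sT)$, and is given by \eqref{eqn:Yk2}.

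The step I expect to be the main obstacle is the curvature-measure bookkeeping in part (b): unlike Lebesgue measure, $C_k(F_\eps,\cdot)$ is sensitive to the lower-dimensional sets $\bd G$, $\bd T$, $\bd K$, and one must argue carefully --- using local determination together with the fact that $C_k(F_\eps,\cdot)$ lives on $\{x:d(x,F)=\eps\}$, which misses $F\supset\bd G\cup\bd T\cup\bd K$ --- that the only surviving ``outer'' contribution is $C_k(F_\eps,K_\eps\setminus K)$ and that it is dominated by the left-hand side of \eqref{eqn:C_k-F}. A second, milder delicate point is the transfer of (inner) regularity of $G$ to regularity of $F$ via the identity $\Sigma_{G^c}(x)=\Sigma_F(x)$ for $x\in G$, and, for $k=d-1$, pinning down the surface-area estimate in \cite{winter08} used in place of the curvature-bound condition. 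Beyond these, everything reduces to the compatibility decomposition \eqref{eqn:compat2} and the curvature renewal Theorem~\ref{thm:main-curv}.
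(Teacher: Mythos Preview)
Your proposal is correct and follows essentially the same two-part strategy as the paper's proof: first verify hypotheses (i) and (ii) of Theorem~\ref{thm:main-curv} for $G$ via compatibility and (RC)/(CBC) (using $C_k^{\var}(G_{-\eps})=C_k^{\var}(F_\eps,G)$ and the inclusion $G\subset((\setmap O)^c)_\eps$), then split $C_k(F_\eps)$ into a tiling part equal to $C_k(T_{-\eps})$ and an outer part supported in $K_\eps\setminus K\subset((\setmap O)^c)_\eps$ that vanishes under rescaling. The only cosmetic differences are that the paper decomposes over $T_{-\eps}$ and $K_\eps\setminus K$ rather than $T$ and $T^c$ (equivalent, since $F_\eps\cap T=T_{-\eps}$ and $F_\eps\cap T^c=K_\eps\setminus K$), and for $k=d-1$ the paper invokes \cite[Lemma~4.8]{RW09} directly to obtain (CBC) rather than passing through Lemma~\ref{lem:key} and a volume-to-surface argument.
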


\begin{proof}
  For the assertions on the tiling $\sT$, we use Theorem~\ref{thm:main-curv}, for which we need to check that the assumptions (RC) and (CBC) imply the hypotheses (i) and (ii) of Theorem~\ref{thm:main-curv}. First, it is easy to see that the compatibility implies $\bd G_{-\eps}\cap G\subset \bd F_\eps$. So if $\eps\in(0,g)$ is a regular value for $F$, then it is also a regular value for $G^c$. Hence (RC) implies (i). For the second claim 
  observe that the compatibility assumption implies $\bd G_{-\eps}\cap G=\bd F_\eps\cap G$ from which we conclude, for regular values $\eps>0$ of $F$,
  $$
  C_k^{\var}(G_{-\eps})=C_k^{\var}(G_{-\eps},G)=C_k^{\var}(F_\eps, G),
  $$
  since the curvature measures are locally determined in the open set $G$, cf.~e.g.~\cite[property (1.5)]{WZ}. Now observe that $G\subset (\setmap O)^c\subset((\setmap O)^c)_\eps$. This implies the inequality
 $$
 C_k^{\var}(G_{-\eps})\leq C_k^{\var}(F_\eps, ((\setmap O)^c)_\eps),
 $$
 from which it is obvious that (CBC) implies condition (ii) of Theorem~\ref{thm:main-curv} as claimed. Note that $g<R$. In the case $k=d-1$, it follows from \cite[Lemma 4.8]{RW09} that the  condition (CBC) is always satisfied for strong open sets $O$ (and therefore we did not need to assume it). So also in this case (CBC) implies condition (ii). The assertions regarding the tiling $\sT$ follow thus directly from Theorem~\ref{thm:main-curv}.
 For the assertions regarding $F$, we employ the compatibility relation \eqref{eqn:compat2} to see that
 \begin{align} \label{eqn:compat3}
\eps^{D-k}C_k(F_\eps,F_\eps)=\eps^{D-k}C_k(F_\eps,T_{-\eps})+ \eps^{D-k}C_k(F_\eps, K_{\eps}\setminus K),
\end{align}
   for any regular value $\eps>0$ of $F$. Taking limits as $\eps\searrow 0$ in \eqref{eqn:compat3}, we first observe that
   \begin{align}\label{eqn:lim-K}
   \esslim{\eps \searrow 0} \eps^{D-k} C_k(F_\eps, K_\eps\setminus K)=0.
   \end{align}
 Indeed, (CBC) and the inclusions $K_\eps\setminus K\subset (\setmap O)^c\subset ((\setmap O)^c)_\eps$ yield that the essential limit of $\eps^{D-k} C_k^\var(F_\eps, K_\eps\setminus K)$ as $\eps\searrow 0$ is zero and then  \eqref{eqn:lim-K} follows from the inequality $|C_k(F_\eps, K_\eps\setminus K)|\leq C_k^{\var}(F_\eps, K_\eps\setminus K)$. In the case $k=d-1$, we use again \cite[Lemma 4.8]{RW09} for the same conclusion. 

 The second observation is that the first term on the right hand side coincides with $\eps^{D-k}C_k(T_{-\eps})$, since $\bd F_\eps\cap T=\bd T_{-\eps}\cap T$. Thus, the essential limit as $\eps \searrow 0$ of this term is $\sC_k^D(\sT)$ and it exists if and only if the essential limit on the left hand side (that is, $\sC_k^D(F)$) exists. By the first part of the proof, $\sC_k^D(\sT)$ (and thus $\sC_k^D(F)$) exists in particular if $F$ is nonlattice. In general (in particular in the lattice case), we can argue similarly. Taking average limits on both sides of \eqref{eqn:compat3}, the second term on the right still vanishes, showing the equality of $\asC_k^D(F)$ and $\asC_k^D(\sT)$ whenever they exist. But $\asC_k^D(\sT)$ exists by the first part of the proof and is given by \eqref{eqn:Yk2}. This completes the proof.
\end{proof}

\begin{remark}
  The condition (CBC) above is not exactly the condition used in \cite[Theorem 2.1]{WZ}, but (CBC) is implied by (and thus weaker than or at least equivalent to) the corresponding condition (ii) in \cite[Theorem 2.1]{WZ}, see \cite[Lemma 3.1]{WZ}. The proof of Theorem~\ref{thm:comp} shows that under the compatibility assumption, (CBC) implies condition (ii) of Theorem~\ref{thm:main-curv}. Hence, under compatibility, we lose no generality by working with the tilings instead of the sets. It is an interesting question, whether (under compatibility) (ii) is actually equivalent with (CBC) or whether it is strictly weaker.
\end{remark}

\begin{remark}
  In the proof of Theorem~\ref{thm:comp} we have used the assumptions of \cite[Theorem 2.1]{WZ} but not its conclusion. Thus Theorem~\ref{thm:comp} provides an independent proof of the existence of (average) fractal curvatures of self-similar sets, which is rather concise and simple, though restricted to self-similar sets which possess a compatible tiling generated on a strong feasible set.
\end{remark}

\paragraph{\bf Generator-type formulas for fractal curvatures without compatibility.} In view of Theorem \ref{thm:Minkowski-measurability-fractals2} for Minkowski contents, the question arises, whether one can also get rid of the compatibility assumption in Theorem~\ref{thm:comp}, and derive generator-type formulas for the fractal curvatures of self-similar sets in a more general setting. It is clear that without compatibility, the curvature measures $C_k(F_\eps,\cdot)$ and $C_k(G_{-\eps},\cdot)$ are not the same inside $G$, such that one has to look at the former now instead of the latter. More precisely, we will be interested in the fractal curvatures $\sC_k^D(F,O)$ of $F$ relative to a strong feasible set $O$, cf.~\eqref{eqn:rel-fc}. Again the projection condition \eqref{eqn:PC} plays an important role in absence of compatibility. Recall from Proposition~\ref{prop:Vc} that there is always a strong feasible set satisfying \eqref{eqn:PC}.  An additional problem now is that in general the intersections of $\bd F_\eps$ with the tile boundaries cannot be neglected in the case of curvature measures. To avoid this difficulty, we assume additionally that $C_k^\var(F_\eps,\bd O)=0$ for almost all $\eps>0$, which is a rather mild condition on the feasible set $O$ on which the tiling is generated. (Recall that we have the freedom to choose suitable sets $O$.) On the other hand, this condition ensures enough continuity of the relevant curvatures to apply again the Renewal Theorem. Moreover, it allows to write the formulas in terms of $G$ (rather than $\Gamma$). Note that under \eqref{eqn:PC}, the assumption implies that $C^\var_k(F_\eps,\bd G)=0$ for almost all $\eps>0$, since $\bd G\subset \bigcup_{i=1}^N \bd S_iO \cup \bd O$. Recall the definition of $\tilde g$ from \eqref{eq:rel-inr}.


\begin{theorem}[Generator formula for fractal curvatures of self-similar sets without compatibility]
  \label{thm:no-comp-curv}
Let $F$ be a self-similar set in $\bR^d$ satisfying OSC and with similarity dimension $D<d$.
Let $O$ be a strong feasible open set for $F$ satisfying the projection condition \eqref{eqn:PC} and  $C_k^\var(F_\eps,\bd O)=0$  for almost all $\eps>0$. Let $k\in\{0,\ldots,d-1\}$. If $k\le d-2$, assume additionally that $F$ satisfies the conditions (RC) and (CBC) of Theorem~\ref{thm:comp}.

Then $\sC_k^D(F,O^c)=0$. 
Moreover, 
$\asC_k^D(F)$ and $\asC_k^D(F,O)$ exist and coincide. They are both given by the finite expression
\begin{align} \label{eqn:curv-no-comp-gen}
  \asC_k^D(F)&=\frac 1\eta \int_0^{\tilde g} \eps^{D-k-1} C_k(F_\eps,G)\ d\eps.
\end{align}
Furthermore, $\sC_k^D(F)$ exists if and only if $\sC_k^D(F,O)$ exists (and this happens in particular whenever $F$ is nonlattice). In this case, both quantities are given by the expression in \eqref{eqn:curv-no-comp-gen}.
\end{theorem}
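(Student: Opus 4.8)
Here is the plan. I would run the same argument that proves Theorem~\ref{thm:Minkowski-measurability-fractals2}, with the Lebesgue content $\lambda_d(F_\eps\cap\mydot)$ replaced by the signed curvature measure $C_k(F_\eps,\mydot)$ and with the elementary volume bounds replaced by the curvature bound (CBC) (and, when $k=d-1$, by \cite[Lemma~4.8]{RW09}, which makes (CBC) superfluous in that case). Fix a Lebesgue null set $\mathcal N\subset(0,\infty)$ containing every non-regular value of $F$ and every $\eps$ with $C_k^\var(F_\eps,\bd O)\neq0$, and with $r_i\mathcal N\subset\mathcal N$ for all $i$ (e.g.\ $\mathcal N:=\bigcup_{\sigma\in\Sigma_N^*}r_\sigma\mathcal N_0$ for the union $\mathcal N_0$ of those two sets); for $\eps\notin\mathcal N$ all curvature measures below are defined, and similarity invariance gives $C_k^\var(F_\eps,\bd S_iO)=r_i^kC_k^\var(F_{\eps/r_i},\bd O)=0$, whence also $C_k^\var(F_\eps,\bd G)=0$ and $C_k^\var(F_\eps,\bd\Gamma)=0$ since $\bd G,\bd\Gamma\subset\bd O\cup\bigcup_i\bd S_iO$, where $\Gamma:=O\setminus\setmap O$. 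The proof of $\sC_k^D(F,O^c)=0$ is then verbatim that of $\sM^D(F,O^c)=0$ in Theorem~\ref{thm:Minkowski-measurability-fractals2}(i): from $O^c\subset(\setmap O)^c\subset((\setmap O)^c)_\eps$ and (CBC) (resp.\ \cite[Lemma~4.8]{RW09} for $k=d-1$) one gets $|C_k(F_\eps,O^c)|\le C_k^\var(F_\eps,((\setmap O)^c)_\eps)\le c\eps^{k-D+\gamma}$, so $\eps^{D-k}|C_k(F_\eps,O^c)|\le c\eps^\gamma\to0$; splitting $C_k(F_\eps)=C_k(F_\eps,O)+C_k(F_\eps,O^c)$ for $\eps\notin\mathcal N$ and passing to the (averaged, resp.\ ordinary) limit as $\eps\searrow0$ yields $\asC_k^D(F)=\asC_k^D(F,O)$ and that $\sC_k^D(F)$ exists iff $\sC_k^D(F,O)$ does, with the same value. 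It thus remains to show $\asC_k^D(F,O)$ exists and equals the right-hand side of \eqref{eqn:curv-no-comp-gen}, and that $\sC_k^D(F,O)$ exists with the same value whenever $F$ is nonlattice.

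For this I would apply the Renewal Theorem \cite[Theorem~4.10]{RW09} (with upper limit $a=\tilde g$ in the sense of Remark~\ref{rem:renewal}) to $f(\eps):=C_k(F_\eps,O)$ and $\varphi_k(\eps):=C_k(F_\eps,G)$ for $\eps\notin\mathcal N$ (and $f=\varphi_k:=0$ on $\mathcal N$); both vanish for $\eps>\tilde g$, since then $O\subset\inn{F_\eps}$ and hence $\bd F_\eps\cap O=\emptyset$. The heart of the matter is the renewal equation $\varphi_k(\eps)=f(\eps)-\sum_{i=1}^N r_i^k\ind{(0,r_i\tilde g]}(\eps)f(\eps/r_i)$ for $\eps\notin\mathcal N$. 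By similarity invariance $r_i^kf(\eps/r_i)=C_k((S_iF)_\eps,S_iO)$, and the disjoint decomposition $O=\Gamma\sqcup\bigsqcup_iS_iO$ gives $C_k(F_\eps,O)=C_k(F_\eps,\Gamma)+\sum_iC_k(F_\eps,S_iO)$; so it suffices to check, for each $i$, that $C_k(F_\eps,S_iO)=0$ when $\eps>r_i\tilde g$ (because then $S_iO\subset\inn{F_\eps}$) and $C_k(F_\eps,S_iO)=C_k((S_iF)_\eps,S_iO)$ when $\eps\le r_i\tilde g$. The latter is where \eqref{eqn:PC} enters: by Lemma~\ref{lem:PC}(i) the closed sets $F_\eps$ and $(S_iF)_\eps$ coincide on the open set $S_iO$, so their curvature measures agree there by local determinacy of curvature measures. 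This leaves $\varphi_k(\eps)=C_k(F_\eps,\Gamma)$, and $C_k(F_\eps,\Gamma)=C_k(F_\eps,G)$ for $\eps\notin\mathcal N$ since $\Gamma\setminus G\subset\bd\setmap O\subset\bigcup_i\bd S_iO$ is $C_k^\var(F_\eps,\mydot)$-null; the renewal equation follows.

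It then remains to verify the two hypotheses of \cite[Theorem~4.10]{RW09} on $\varphi_k$. For almost-everywhere continuity: at any $\eps\notin\mathcal N$, Proposition~\ref{prop:contin} yields weak convergence $C_k(F_{\eps_n},\mydot)\to C_k(F_\eps,\mydot)$ whenever $\eps_n\to\eps$, which together with $C_k^\var(F_\eps,\bd G)=0$ forces $C_k(F_{\eps_n},G)\to C_k(F_\eps,G)$, i.e.\ continuity of $\varphi_k$ at $\eps$. For the decay: from $G\subset(\setmap O)^c\subset((\setmap O)^c)_\eps$ and (CBC) (resp.\ \cite[Lemma~4.8]{RW09} for $k=d-1$) one gets $|\varphi_k(\eps)|=|C_k(F_\eps,G)|\le C_k^\var(F_\eps,((\setmap O)^c)_\eps)\le c\eps^{k-D+\gamma}$ for a.e.\ $\eps\in(0,\tilde g)$. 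Then \cite[Theorem~4.10]{RW09} gives boundedness of $\eps^{D-k}f(\eps)$ on $(0,\tilde g)$, the existence of $\asC_k^D(F,O)=\frac1\eta\int_0^{\tilde g}\eps^{D-k-1}\varphi_k(\eps)\,d\eps$, and, in the nonlattice case, the existence of $\sC_k^D(F,O)$ with the same value; since $\varphi_k(\eps)=C_k(F_\eps,G)$ a.e., this is precisely \eqref{eqn:curv-no-comp-gen}, and together with the reductions of the first paragraph the proof is complete.

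I expect the renewal-equation step to be the main obstacle. Because the curvature measures $C_k(F_\eps,\mydot)$ are signed and concentrated on the lower-dimensional set $\bd F_\eps$ (rather than spread out like Lebesgue measure), one cannot simply ignore the intersections of $\bd F_\eps$ with the tile boundaries; the hypothesis $C_k^\var(F_\eps,\bd O)=0$ and the local-determinacy property have to be threaded carefully through the argument, and the passage $C_k(F_\eps,\Gamma)=C_k(F_\eps,G)$ as well as the continuity of $\varphi_k$ both rest on it. A secondary nuisance is the case $k=d-1$, for which (CBC) is not assumed and must be replaced throughout by the surface-area estimate \cite[Lemma~4.8]{RW09}.
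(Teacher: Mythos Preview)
Your proof is correct and follows essentially the same route as the paper: the paper separates the argument into Proposition~\ref{thm:main-curv-O} (the renewal-theorem computation of $\asC_k^D(F,O)$ and $\sC_k^D(F,O)$) and the proof of Theorem~\ref{thm:no-comp-curv} proper (reduction via (CBC) and the splitting $C_k(F_\eps)=C_k(F_\eps,O)+C_k(F_\eps,O^c)$), whereas you have simply inlined the two. One small slip: ``similarity invariance'' alone yields $C_k^\var((S_iF)_\eps,\bd S_iO)=r_i^kC_k^\var(F_{\eps/r_i},\bd O)$, not the same identity with $F_\eps$ in place of $(S_iF)_\eps$; passing to $F_\eps$ on $\bd S_iO$ needs \eqref{eqn:PC} and local determinacy, a point the paper treats with equal brevity.
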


The proof of this result 
is based on a suitable counterpart of Theorem~\ref{thm:main-curv} on the existence of $\sC_k^D(F,O)$.

\begin{prop} \label{thm:main-curv-O}
Let $F$ be a self-similar set in $\bR^d$ satisfying OSC and with similarity dimension $D<d$.
Let $O$ be a strong feasible open set for $F$ satisfying the projection condition \eqref{eqn:PC} and  $C_k^\var(F_\eps,\bd O)=0$ for almost all $\eps\in(0,\tilde g)$. Let $k\in\{0,\ldots,d-1\}$. If $k\leq d-2$, assume additionally that the following conditions are satisfied:
\begin{enumerate}[(i)]
\item Almost all $\eps\in(0,\tilde g)$ are regular for $F$.
\item There are constants $c,\gamma>0$ such that for almost all $0<\eps< \tilde g$
\begin{equation}\label{eqn:C_k-G-O}
C_k^{\var}(F_{\eps},G)\le c \eps^{k-D+\gamma}.
 \end{equation}
\end{enumerate}
Then $\eps^{D-k}C_k^{\var}(F_\eps, O)$ is bounded for $\eps\in(0,g)$. Moreover, the average $k$-th fractal curvature $\asC_k^D(F,O)$ of $F$ relative to $O$ exists and coincides with the number
\begin{align}\label{eqn:Yk-Gamma}
\frac{1}{\eta} \int_0^g \eps^{D-k-1} C_k(F_\eps,G)\ d\eps.
\end{align}
If $F$ is nonlattice, then also the $k$-th fractal curvature $\sC_k^D(F,O)$ of $F$ relative to $O$ exists and equals the number in \eqref{eqn:Yk-Gamma}.
\end{prop}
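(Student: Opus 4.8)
The plan is to mirror the proofs of Theorem~\ref{thm:main-curv} and Proposition~\ref{thm:main2}, applying the almost-everywhere-continuous version of the Renewal Theorem, \cite[Theorem~4.10]{RW09} (to which Remark~\ref{rem:renewal} applies verbatim), to the function $f(\eps):=C_k(F_\eps,O)$, set equal to $0$ at those $\eps$ which are critical for $F$ (a Lebesgue null set by (i) when $k\le d-2$, and with no exceptional $\eps$ at all when $k=d-1$, since then $C_{d-1}(F_\eps,\mydot)=\tfrac12\Ha^{d-1}(\bd F_\eps\cap\mydot)$ is defined for every $\eps$). As usual we may assume $\tilde g=1$. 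The starting point is the scaling identity $C_k((S_iF)_\eps,S_iO)=C_k(S_i(F_{\eps/r_i}\cap O))=r_i^k\,C_k(F_{\eps/r_i},O)=r_i^k f(\eps/r_i)$, which shows that the renewal function
\[
\varphi_k(\eps):=f(\eps)-\sum_{i=1}^N r_i^k\,\ind{(0,r_i\tilde g]}(\eps)\,f(\eps/r_i)
\]
obeys the renewal equation demanded by \cite[Theorem~4.10]{RW09}.

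The heart of the argument is to identify $\varphi_k$. Using the disjoint Borel decomposition $O=\Gamma\sqcup\bigsqcup_{i=1}^N S_iO$ (with $\Gamma=O\setminus\setmap O$) and additivity of the signed measure $C_k(F_\eps,\mydot)$ one gets $f(\eps)=C_k(F_\eps,\Gamma)+\sum_i C_k(F_\eps,S_iO)$. The projection condition, through Lemma~\ref{lem:PC}(i), gives $F_\eps\cap S_iO=(S_iF)_\eps\cap S_iO$; since $S_iO$ is open, local determination of curvature measures (e.g.\ \cite[property~(1.5)]{WZ}) yields $C_k(F_\eps,S_iO)=C_k((S_iF)_\eps,S_iO)=r_i^k f(\eps/r_i)$ for \emph{every} $\eps>0$. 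For $\eps>r_i\tilde g$ one has $S_iO\subset\inn F_\eps$ (because $\sup_{x\in S_iO}d(x,S_iF)=r_i\tilde g$), so both $C_k(F_\eps,S_iO)$ and $r_i^k f(\eps/r_i)$ vanish there, matching the indicator $\ind{(0,r_i\tilde g]}$. Hence $\varphi_k(\eps)=C_k(F_\eps,\Gamma)$ for a.a.\ $\eps\in(0,\tilde g)$; and since $\bd G\subset\bigcup_i\bd S_iO\cup\bd O$, the hypothesis $C_k^\var(F_\eps,\bd O)=0$ (together with scaling and \eqref{eqn:PC}, giving $C_k^\var(F_\eps,\bd S_iO)=r_i^k C_k^\var(F_{\eps/r_i},\bd O)=0$) forces $C_k^\var(F_\eps,\bd G)=0$ for a.a.\ $\eps$, so $\varphi_k(\eps)=C_k(F_\eps,G)$ for a.a.\ $\eps$. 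For $\eps\ge\tilde g$ both $\varphi_k(\eps)=f(\eps)$ and $C_k(F_\eps,G)$ vanish, since then $O\subset\inn F_\eps$.

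It then remains to verify the hypotheses of \cite[Theorem~4.10]{RW09} for $\varphi_k$. The decay estimate is immediate: $|\varphi_k(\eps)|=|C_k(F_\eps,G)|\le C_k^\var(F_\eps,G)\le c\,\eps^{k-D+\gamma}$ for a.a.\ $\eps\in(0,\tilde g)$ by (ii) (for $k=d-1$ this bound is automatic for strong $O$, cf.\ \cite[Lemma~4.8]{RW09}), and $\varphi_k\equiv0$ on $[\tilde g,\infty)$; in particular $\varphi_k$ is bounded on compact subsets of $(0,\infty)$. For the almost-everywhere continuity, at every regular value $\eps$ of $F$ Proposition~\ref{prop:contin} gives weak convergence $C_k(F_{\eps_n},\mydot)\to C_k(F_\eps,\mydot)$ for $\eps_n\to\eps$, which, because $C_k^\var(F_\eps,\bd O)=0$, upgrades to $C_k(F_{\eps_n},O)\to C_k(F_\eps,O)$; thus $f$, hence $\varphi_k$, is continuous at every regular value, and a.a.\ $\eps$ are regular by (i) (all $\eps$ if $k=d-1$). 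Then \cite[Theorem~4.10]{RW09} yields the existence of $\asC_k^D(F,O)$ together with the formula $\asC_k^D(F,O)=\tfrac1\eta\int_0^\infty\eps^{D-k-1}\varphi_k(\eps)\,d\eps=\tfrac1\eta\int_0^{\tilde g}\eps^{D-k-1}C_k(F_\eps,G)\,d\eps$, which by the vanishing of the integrand for $\eps\ge\tilde g$ is the asserted expression \eqref{eqn:Yk-Gamma}, and likewise the existence of $\sC_k^D(F,O)$ with the same value in the nonlattice case. The boundedness of $\eps^{D-k}C_k^\var(F_\eps,O)$ (not merely of $\eps^{D-k}|C_k(F_\eps,O)|$) is obtained by running the boundedness part of the same theorem for the measurable, locally bounded variation renewal function $\eps\mapsto C_k^\var(F_\eps,O)$, whose renewal term $C_k^\var(F_\eps,\Gamma)=C_k^\var(F_\eps,G)$ again satisfies the bound~(ii).

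The steps I expect to be most delicate are, first, the passage from weak convergence of the curvature measures $C_k(F_{\eps_n},\mydot)$ to convergence of their (signed) masses on $O$: this is exactly where $C_k^\var(F_\eps,\bd O)=0$ enters, and it has to be exploited for the total variation measure so that no curvature escapes across $\bd O$ in the limit; and, second, establishing the variation bound $\eps^{D-k}C_k^\var(F_\eps,O)=O(1)$, for which the trivial volume bound available in the Minkowski case (Proposition~\ref{thm:main2}) has no curvature analogue, so one must instead feed assumption (ii) into the variation form of the renewal equation.
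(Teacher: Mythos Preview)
Your proof is correct and follows essentially the same route as the paper's: both apply \cite[Theorem~4.10]{RW09} to $f(\eps)=C_k(F_\eps,O)$, use the disjoint decomposition $O=\Gamma\cup\bigcup_i S_iO$, the projection condition via Lemma~\ref{lem:PC}(i), and local determination to identify the renewal function with $C_k(F_\eps,G)$, and then verify the decay and almost-everywhere continuity hypotheses via (ii) and Proposition~\ref{prop:contin} together with the boundary condition. Your treatment is in fact slightly more careful than the paper's in two places: you spell out why $C_k^{\var}(F_\eps,\bd S_iO)=0$ (the paper asserts $C_k^{\var}(F_\eps,\bd G)=0$ just before the proposition without details), and you explicitly address the boundedness of the \emph{variation} $\eps^{D-k}C_k^{\var}(F_\eps,O)$ via a separate renewal argument, which the paper leaves implicit.
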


 \begin{proof}[Proof of Proposition~\ref{thm:main-curv-O}]
Let $\sN'$ be the set of critical values of $F$ and set $\sN:=\bigcup_{i=1}^N r_i \sN$. Note that $\sN\subset(0,\tilde g)$ and $\sN$ is a null set by condition (i).
Let $f(\eps):=C_k(F_{\eps},O)$ and $\varphi_k(\eps):=C_k(F_\eps,G)$, for $\eps\notin\sN$ and extend both functions to $\sN$ by setting $f(\eps)=\varphi_k(\eps)=0$.
   The proof follows now essentially the lines of the proof of Theorem~\ref{thm:main-curv}. First we show that the new $f$ and $\varphi_k$ satisfy the renewal equation \eqref{eqn:curv-renewal} for all $\eps\notin\sN$. By definition of $\tilde g$, both functions are zero for $\eps\geq \tilde g$, such that \eqref{eqn:curv-renewal} holds trivially for $\eps>\tilde g$.

   Let $\eps\in(0,\tilde g)\setminus\sN$. Then $\eps$ is regular for $F$. Moreover, $\eps/r_i$ is regular for $F$ for each $i=1,\ldots,N$. (Assume not. Then $\eps/r_i\in\sN'$ and thus $\eps\in r_i\sN'\subset\sN$, a contradiction.)
   Let as before $\Gamma=O\setminus \setmap O$. Then the decomposition $O=\bigcup_{i=1}^N S_i O\cup \Gamma$ is disjoint and we have
   \begin{align}
  C_k(F_\eps, O)=\sum_{i=1}^N C_k(F_\eps, S_i O)+C_k(F_{\eps},\Gamma).
\end{align}
 Since $\Gamma\setminus G\subseteq\bd G$ and $C_k^\var(F_\eps,\bd G)=0$, we can replace $\Gamma$ by $G$ in the above equation such that the last term on the right is $\varphi_k(\eps)$ (for a.a. such $\eps$). Furthermore, since the projection condition \eqref{eqn:PC} is assumed to hold, by Lemma~\ref{lem:PC} (i), we can replace $C_k(F_\eps, S_i O)$ by $C_k((S_iF)_\eps, S_i O)=r_i^k f(\eps/r_i)$ 
 in the above equation, from which  \eqref{eqn:curv-renewal} is transparent.

 The assumptions (i) and (ii) imply that the hypothesis of \cite[Thm.~4.10]{RW09} is satisfied and the assertions of Proposition~\ref{thm:main-curv-O} follow directly from this theorem. In particular, by Proposition~\ref{prop:contin}, the measures $C_k(F_\eps,\cdot)$ are weakly continuous in $\eps$ at every regular value $\eps$ of $F$. Therefore, $\varphi_k$ is continuous at those $\eps$, since, by the assumption $C_k^\var(F_\eps,\bd O)=0$, we have $C_k^\var(F_\eps,\bd G)=0$ and thus $G$ is a continuity set of the measure $C_k(F_\eps,\cdot)$.
\end{proof}

\begin{proof}[Proof of Theorem~\ref{thm:no-comp-curv}]
  We want to apply Proposition~\ref{thm:main-curv-O}, for which we need to check that (for $k\leq d-2$) the assumptions (i) and (ii) of this statement are satisfied. Obviously, (RC) implies (i).
  Furthermore, (CBC) implies (ii) simply because $G\subset (\setmap O)^c\subset ((\setmap O)^c)_\eps$. 
  The assertions regarding the relative fractal curvatures $\sC_k^D(F,O)$ and $\asC_k^D(F,O)$ follow thus directly from Proposition~\ref{thm:main-curv-O}.
 For the assertions regarding the fractal curvatures of $F$, we use the obvious equation
 \begin{align} \label{eqn:decomp}
\eps^{D-k}C_k(F_\eps)=\eps^{D-k}C_k(F_\eps,O)+ \eps^{D-k}C_k(F_\eps, O^c),
\end{align}
   which holds for any $\eps\notin\sN'$. Taking essential limits as $\eps\searrow 0$ in this equation, we first observe that
   \begin{align}\label{eqn:lim-Oc}
   \sC^D_k(F,O^c)=\esslim{\eps \searrow 0} \eps^{D-k} C_k(F_\eps,O^c)=0.
   \end{align}
 Indeed, (CBC) and the set inclusion $O^c\subset (\setmap O)^c \subset ((\setmap O)^c)_\eps$ yield that the essential limit of $\eps^{D-k} C_k^\var(F_\eps, O^c)$ as $\eps \searrow 0$ is zero. Thus, the claim \eqref{eqn:lim-K} follows from the inequality $|C_k(F_\eps, O^c)|\leq C_k^{\var}(F_\eps, O^c)$. 
 Now the remaining assertions of Theorem~\ref{thm:no-comp-curv} follow from \eqref{eqn:decomp} and \eqref{eqn:lim-Oc} similarly  as in the proof of Theorem~\ref{thm:comp}.
 %
%
\end{proof}

\begin{remark}
   It is worth noting that the validity of the conditions (RC) and (CBC) in Theorems~\ref{thm:comp} and \ref{thm:no-comp-curv} does not depend on the choice of the feasible set $O$. For (RC) this is obvious and for (CBC) this follows from \cite[Corollary 4.9]{Winter11}. It is possible to reformulate (CBC) in such a way that the set $O$ is not used. Hence these conditions do not impose any additional restrictions on the choice of $O$.
   On the other hand, the compatibility in Theorem~\ref{thm:comp} and the projection condition \eqref{eqn:PC} together with the \emph{boundary condition} $C_k(F_\eps,\bd O)=0$ for a.a. $\eps>0$ in Theorem~\ref{thm:no-comp-curv} clearly depend on the choice of $O$. While compatibility is definitely not satisfiable for all self-similar sets, cf.~Remark~\ref{rem:compatible-limit} above, Proposition~\ref{prop:Vc} shows that every self-similar set (satisfying OSC and $D<d$) possesses a strong feasible set $O$ such that \eqref{eqn:PC} is satisfied. Hence the projection condition does not lessen the class of sets covered by Theorem~\ref{thm:no-comp-curv}. Only the boundary condition may impose some additional restriction on this class, since it may happen that no set $O$ satisfies both \eqref{eqn:PC} and this condition.
\end{remark}

\begin{remark}
   The results imply that the fractal curvatures $\sC^D_k(F)$ are finite but (for $k\leq d-2$) they are not necessarily positive. They can assume negative values and they can also be zero. There exist non-trivial self-similar sets for which the similarity dimension $D$ is not the right scaling exponent for the $k$-th curvature measure. Such sets are studied in detail in \cite{PokWi}.
\end{remark}

\begin{remark}
   In Theorem~\ref{thm:Minkowski-measurability-fractals2} we have given a complete characterization of the existence of generator-type formulas for the Minkowski content of a self-similar set based on a strong feasible set $O$.
   The corresponding statement for fractal curvatures, Theorem~\ref{thm:no-comp-curv}, is not quite as strong. The existence of a strong feasible set $O$ satisfying the projection condition and the boundary condition $C_k(F_\eps,\bd O)=0$ for a.a $\eps$, ensures the existence of a generator type formula. But the converse is probably not true in general. Since curvature measures are signed for $k\leq d-2$, it could happen that a generator-type formula holds even if the projection condition fails, because different contributions from the extra terms may cancel each other.
\end{remark}

\begin{remark}
  The regularity condition (i) in Proposition~\ref{thm:main-curv-O} can be weakened as follows: for almost all $\eps\in(0,\tilde g)$, there is no critical point of $F$ in the set $\bd F_\eps\cap \cl{G}$.
  Indeed, it follows from the observation that the assertion of Lemma~\ref{lem:sN} holds equally with $G$ replaced by $\cl{G}$ and from the fact that $\{S_\sigma \cl{G}:\sigma\in\Sigma_N^*\}$ is a cover of $\cl{O}\setminus F$. Note that critical points of $F$ are by definition outside $F$. Hence the above condition implies that for a.a. $\eps$ there are no critical points of $F$ in the set $\bd F_\eps\cap O$, which is all that is needed in the proof of Proposition~\ref{thm:main-curv-O}.
\end{remark}

\paragraph{\bf The case $k=d-1$.} The measure $C_{d-1}(F_\eps,\cdot)$ coincides with $\frac 12 \Ha^{d-1}(\bd F_\eps\cap\cdot)$ and the assumptions (RC) and (CBC) in Theorem~\ref{thm:no-comp-curv} are not needed, since the surface area is well defined for any parallel set $F_\eps$ and a condition analogous to (CBC) is always satisfied, cf.~\cite[Lemma 4.8]{RW09}. The corresponding (average) fractal curvature is (up to a normalisation constant) the (average) S-content, discussed in \cite{RW09}. The S-content was shown in \cite{RW12} to coincide (for arbitrary bounded sets) with the Minkowski content in case one of these contents exists as a positive and finite number. This allows to derive an alternative formula for the Minkowski content from Theorem~\ref{thm:comp} or, more generally, from Theorem~\ref{thm:no-comp-curv}:

\begin{cor} \label{cor:comp}
Let $F$ be a self-similar set in $\bR^d$ satisfying OSC and with $D<d$.
Let $O$ be a strong feasible open set for $F$ satisfying the projection condition \eqref{eqn:PC}. Assume $\Ha^{d-1}(F_\eps\cap\bd O)=0$ for almost all $\eps\in(0,\tilde g)$.

 Then, the average Minkowski content $\asM^D(F)$ is given by the alternative expression \begin{align}\label{eqn:Yd-G2}
\frac 1{d-D}\frac{1}{\eta} \int_0^{\tilde g} \eps^{D-d} \Ha^{d-1}(\bd F_\eps\cap G) \ d\eps=\frac 1{d-D}\frac{1}{\eta} \int_0^{\tilde g} \eps^{D-d} \frac d{d\eps}\lambda_d(F_\eps\cap G)\ d\eps\,.
\end{align}
If $F$ is nonlattice, then the Minkowski content $\sM^D(F)$ is given by the same expression.
\end{cor}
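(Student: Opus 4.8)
The plan is to obtain Corollary~\ref{cor:comp} as the special case $k=d-1$ of Theorem~\ref{thm:no-comp-curv}, combined with the coincidence of the S-content with the Minkowski content. First I would check that for $k=d-1$ the hypotheses of Theorem~\ref{thm:no-comp-curv} are exactly the ones assumed here: the measure $C_{d-1}(F_\eps,\cdot)=\tfrac12\Ha^{d-1}(\bd F_\eps\cap\cdot)$ is well defined for every $\eps>0$, so condition (RC) is vacuous; condition (CBC) holds automatically for strong feasible sets by \cite[Lemma~4.8]{RW09}; and the boundary condition $C_{d-1}^{\var}(F_\eps,\bd O)=\tfrac12\Ha^{d-1}(\bd F_\eps\cap\bd O)=0$ for a.a.\ $\eps$ follows from $\bd F_\eps\subset F_\eps$ and the hypothesis $\Ha^{d-1}(F_\eps\cap\bd O)=0$. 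Thus Theorem~\ref{thm:no-comp-curv} (via Proposition~\ref{thm:main-curv-O}) applies and yields that $\asC_{d-1}^D(F)$ exists, that $\eps^{D-d+1}\Ha^{d-1}(\bd F_\eps\cap O)$ is bounded on $(0,g)$, and that
\begin{align*}
  \asC_{d-1}^D(F)=\frac1\eta\int_0^{\tilde g}\eps^{D-d}C_{d-1}(F_\eps,G)\,d\eps
  =\frac1{2\eta}\int_0^{\tilde g}\eps^{D-d}\Ha^{d-1}(\bd F_\eps\cap G)\,d\eps,
\end{align*}
with $\sC_{d-1}^D(F)$ existing and given by the same expression when $F$ is nonlattice.

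It then remains to pass from this statement about the $(d-1)$-st fractal curvature to one about the Minkowski content, i.e.\ to show $\asM^D(F)=\tfrac{2}{d-D}\asC_{d-1}^D(F)$ and, in the nonlattice case, $\sM^D(F)=\tfrac{2}{d-D}\sC_{d-1}^D(F)$. By Gatzouras' Theorem~\ref{thm:gatz}, $\asM^D(F)$ exists and is positive and finite. Using the surface area formula $\lambda_d(F_\eps)=\int_0^\eps\Ha^{d-1}(\bd F_t)\,dt$ (cf.~\cite{RW09}) together with the boundedness from Step~1 (which gives $\eps^{D-d}\lambda_d(F_\eps)\le C$ on $(0,g)$), an application of Fubini's theorem yields
\begin{align*}
  \frac1{|\ln\delta|}\int_\delta^1\eps^{D-d-1}\lambda_d(F_\eps)\,d\eps
  =\frac1{d-D}\cdot\frac1{|\ln\delta|}\int_\delta^1\eps^{D-d}\Ha^{d-1}(\bd F_\eps)\,d\eps+o(1)
\end{align*}
as $\delta\to0$, the error absorbing the bounded contributions $(\delta^{D-d}-1)\lambda_d(F_\delta)$ and $\lambda_d(F_1)-\lambda_d(F_\delta)$; letting $\delta\to0$ gives $\asM^D(F)=\tfrac{2}{d-D}\asC_{d-1}^D(F)$, the averaged counterpart of the S-content/Minkowski content identity of \cite{RW12}. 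In the nonlattice case the same computation with ordinary limits works (using that $\eps^{D-d+1}\Ha^{d-1}(\bd F_\eps)$ converges essentially to $2\sC_{d-1}^D(F)$, whence $\eps^{D-d}\lambda_d(F_\eps)\to\tfrac{2}{d-D}\sC_{d-1}^D(F)$ by dominated convergence after the substitution $t=\eps u$). Plugging in the formula from Step~1 then gives \eqref{eqn:Yd-G2}; the equivalent form with $\tfrac{d}{d\eps}\lambda_d(F_\eps\cap G)$ in place of $\Ha^{d-1}(\bd F_\eps\cap G)$ follows from the relative surface area formula $\Ha^{d-1}(\bd F_\eps\cap G)=\tfrac{d}{d\eps}\lambda_d(F_\eps\cap G)$ for a.e.\ $\eps$, which holds since $\{x:d(x,F)=\eps\}\setminus\bd F_\eps$ is $\Ha^{d-1}$-null for a.e.\ $\eps$.

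The main obstacle is Step~2, the passage from a surface-area functional to a volume functional: one needs the averaged equivalence of the S-content and the Minkowski content, which amounts to controlling the boundary terms in the Fubini computation -- this is where the boundedness estimate of Proposition~\ref{thm:main-curv-O} and the surface area formula of \cite{RW09} (and, for the non-averaged version, the result of \cite{RW12}) enter. The remainder -- verifying that the $k=d-1$ case of Theorem~\ref{thm:no-comp-curv} carries no further hypotheses and keeping track of the constants $\tfrac12$, $\tfrac1\eta$ and $\tfrac1{d-D}$ -- is routine bookkeeping.
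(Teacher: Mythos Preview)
Your proposal is correct and follows essentially the same route as the paper: apply Theorem~\ref{thm:no-comp-curv} in the case $k=d-1$ (where (RC) is vacuous and (CBC) holds automatically by \cite[Lemma~4.8]{RW09}), then convert the resulting S-content formula into one for the Minkowski content. The only difference is that the paper simply invokes \cite{RW12} for the identity $\sM^D(F)=\tfrac{2}{d-D}\sC_{d-1}^D(F)$ (and its averaged counterpart), whereas you spell out a Fubini/integration-by-parts argument for the averaged version; the paper also remarks that the same formula can be obtained directly from \eqref{eqn:rel-mink-gen} by integration by parts, which is the dual computation to yours.
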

However, the derived expression does not provide much additional insight (as we were hoping for). Using the differentiability properties of the volume function, the new formula can, in fact, be obtained directly from \eqref{eqn:rel-mink-gen} using integration by parts. Note that the assumption $\Ha^{d-1}(F_\eps\cap\bd O)=0$ for a.a.~$\eps$ is equivalent to $\lambda_d(\bd G)=0$ in part (iii) of Theorem~\ref{thm:Minkowski-measurability-fractals2}.

\begin{rem}
  All results about self-similar tilings, in particular Theorems~\ref{thm:main} and \ref{thm:main-curv}, apply equally to self-similar fractal sprays in $\R^d$, as discussed e.g.~in \cite{FGCD}.
\end{rem}

\paragraph{\bf Acknowledgements.} During the work on this article the author was supported by DFG project no.\ WI 3264/2-2.  I am very grateful to Erin Pearse for valuable comments on an earlier version, in particular for his observation that the central open set satisfies the projection condition, and for providing Figure~\ref{fig:KC}.

\bibliographystyle{plain}
\bibliography{strings}

\end{document}